\documentclass[10pt]{article}
\usepackage{amsmath}
\usepackage{amsfonts}
\usepackage{amssymb}
\usepackage{multirow}
\usepackage{mathrsfs}
\usepackage[pagewise]{lineno}
\usepackage{xcolor}
\usepackage{tikz}
\usetikzlibrary{arrows,shapes,chains}

\usepackage{graphicx}
\usepackage{listings}
\renewcommand{\Box}{\framebox{\rule{0.3em}{0.0em}}}

\newtheorem{thm}{Theorem}[section]
\newtheorem{lema}{Lemma}[section]
\newtheorem{prop}{Proposition}[section]
\newtheorem{ex}{Example}[section]

\newtheorem{defi}{Definition}[section]

\newcommand{\z}{{ z}}

\newcommand{\bgeqn}{\begin{eqnarray}}
\newcommand{\edeqn}{\end{eqnarray}}
\newcommand{\bgeq}{\begin{eqnarray*}}
\newcommand{\edeq}{\end{eqnarray*}}
\newcommand{\bec}{\begin{center}}
\newcommand{\enc}{\end{center}}



\newcommand{\F}{{\cal F}}

\newcommand{\I}{{\cal I}}

\newcommand{\be}{\begin{equation}}
\newcommand{\ee}{\end{equation}}

\def\dist{\mathop{\rm dist}}

\def\gph {{\rm gph}}



\newtheorem{remark}{Remark}[section]
\renewcommand{\Box}{\hfill \rule{2.3mm}{2.3mm}}

\bibliographystyle{plain}

\setlength{\textwidth}{16cm} \setlength{\textheight}{23cm}
\setlength{\oddsidemargin}{0.0cm} \setlength{\evensidemargin}{0.0cm}
\setlength{\topmargin}{-2cm}
\makeatletter
\def\@biblabel#1{#1.}
\makeatother

\makeatletter

\newenvironment{proof}{\noindent{\bf Proof. }}{\hfill $\Box$\medskip}
\renewcommand{\Box}{\framebox{\rule{0.1em}{0.0em}}}



\begin{document}
\renewcommand{\thefootnote}{}
\begin{center}
{\Large  Optimality Conditions and Exact Penalty for Mathematical Programs with
Switching Constraints\footnote{The first author's work was supported in part by NSFC Grant \#11801152, \#12071133, \#11671122. The second author's work was supported by NSERC.}}

{\large Yan-Chao
Liang\footnote{Yan-Chao
Liang, College of Mathematics and Information Science, Henan Normal University, Xinxiang 453007, China. Email: {\tt liangyanchao83@163.com.}}
\ and\  Jane J. Ye\footnote{Corresponding author: Jane J. Ye, Department of Mathematics and Statistics, University of Victoria, Victoria, BC, V8W 2Y2,
Canada. Email: {\tt janeye@uvic.ca.}}}

\bigskip
\end{center}

{\vspace{13pt} \noindent{\bf Abstract.} In this paper, we give {an overview} on optimality conditions and exact penalization for the mathematical program with switching constraints (MPSC).
MPSC is a new class of optimization problems with important  applications.
It is well known that if MPSC is treated as a standard nonlinear program,  some of the usual constraint qualifications may fail. {To} deal with this issue, one could  reformulate it as a mathematical program with disjunctive constraints (MPDC). In this paper, we first survey  recent results on constraint qualifications and optimality conditions for MPDC,  then apply them to MPSC.
Moreover, we provide two types of sufficient conditions for {the local error bound}   and   exact penalty results for MPSC. One comes from the directional quasi-normality for MPDC,  and the other is obtained via the local decomposition approach.

\noindent{\bf Key words.} \ Mathematical program with switching constraints, mathematical program with disjunctive constraints, directional optimality condition, directional pseudo-normality, directional quasi-normality, error bound, exact penalization.
}

\noindent{\bf 2010 Mathematics Subject Classification.} 90C30,  90C33, 90C46.

\baselineskip 18pt

\section{Introduction}

The mathematical
program with switching constraints (MPSC) defines a class of optimization problems  in which some of the equality constraint functions  are  products of two  functions. The terminology ``switching constraint'' comes from the fact that if the product of two {constraint} functions is equal to zero, then at least one of them must be equal to zero. {MPSC can be used to model the discretized version of the optimal control problem with switching structure (see e.g. \cite{Clason,kanzow-mehlitz-steck,Mehlitz MP} and the references therein),  or to reformulate the so-called  mathematical programs  with either-or-constraints (see \cite[Section 7]{Mehlitz MP}). MPSC has many interesting applications, e.g., optimal control with switching structures have been used to model certain real-world applications  \cite{Gugat,Hante,Seidman} and the mathematical program with {either-or-constraints} was used to study some special instances of portfolio optimization  \cite{kanzow-mehlitz-steck}.}

It is {well known} that if the mathematical program with equilibrium constraints (MPEC)  \cite{LuoPangRalph,outrata}, or the mathematical program with vanishing constraints (MPVC) \cite{Achitziger,Hoheisel},
are treated as nonlinear programs,  then there are issues involving the usual constraint qualifications such as the Mangasarian-Fromovitz Constraint Qualification (MFCQ) and/or the linear independence constraint qualification (LICQ) for MPEC and MPVC. It is not surprising that this issue also exists for MPSC.
Indeed, Mehlitz  \cite[Lemma 4.1]{Mehlitz MP} showed that if an MPSC is treated as a nonlinear program,  then MFCQ fails at any feasible point $z^*$
for which there is a pair of switching functions with value equal to zero.
Consequently, he introduced the concepts of weak, Mordukhovich (M-), and strong (S-) stationarity for MPSC and presented some associated constraint qualifications.
Kanzow et al. \cite{kanzow-mehlitz-steck} adopted several relaxation methods  from the numerical treatment of MPEC to MPSC. Li and Guo extended some weak and verifiable constraint qualifications for nonlinear programs  to MPSC  in \cite{Li-Guo}. In the work of \cite{Mehlitz MP,Li-Guo}, the error bound property was not studied and that is one of the main {focuses} of this paper.

{The mathematical {program} with disjunctive constraints (also called the disjunctive program) is a  {type of set-constrained  optimization problem where the set is the union of finitely many polyhedral convex sets.}
Programs such as  MPEC, MPVC and MPSC can be reformulated as  disjunctive programs.  The classical concepts of  optimality for  disjunctive programs such as  S-stationary condition based on the regular normal cone and  M-stationary condition based on the limiting/Mordukhovich normal cone for disjunctive programs were introduced by Flegel et al. \cite{Flegel etal 2007}. Although M-stationary condition holds for a local minimizer under very weak constraint {qualifications} such as the generalized Guignard constraint qualification (GGCQ), it may be weak for some problems  and it does not exclude feasible descent directions.
Based on concepts of metric subregularity and some new developments in variational analysis, for disjunctive programs,
Gfrerer \cite{Gfrerer SIAM Optimal 2014} introduced various new concepts of constraint qualifications and stationarity concepts including the strong M-stationarity and  the extended M-stationarity which are stronger than  M-stationarity.
Moreover, a directional version of LICQ and directional first and second order optimality conditions are given in \cite{Gfrerer SIAM Optimal 2014}.  Another direction of sharpening optimality conditions and weakening constraint qualifications is to consider  directional optimality conditions and constraint qualifications. Bai et al. \cite{Bai-Ye-Zhang2019} introduced the directional  quasi/pseudo-normality as sufficent conditions for the metric subregularity {which} are weaker than both the classical quasi/pseudo-normality and the first order sufficient condition for metric subregularity. {Benko et al. \cite{Benko etal2019} generalized the notions of directional pseudo- and quasi-normality to obtain more sufficient conditions for metric subregularity. In particular,  they have shown that for the disjunctive program, the (directional) pseudo-normality can always take the simplified form while  for a special class of the disjunctive program  called the ortho-disjunctive program (which includes MPSC), the (directional)  quasi-normality can also take the simplified form. Mehlitz \cite{Mehlitz2020optimization} introduced an alternative concept of  LICQ  and {obtained} first and second
order optimality conditions for disjunctive programs.}
{Recall that  M-stationary condition does not preclude the existence of feasible descent directions. To deal with this issue,  recently }Benko and Gfrerer \cite{Benko-Gfrerer2017} introduced  the so-called $\mathcal{Q}$-stationarity and $\mathcal{Q}_M$-stationarity {where $\mathcal{Q}_M$-stationarity is stronger than  M-stationarity}. 
A further extension of $\mathcal{Q}$-stationarity and $\mathcal{Q}_M$-stationarity are presented in  Benko and Gfrerer \cite{Benko-Gfrerer2018}.  To deal with the difficulty of calculating the limiting normal cone to the feasible region,  Gfrerer \cite{Gfrerer2019} introduced a new concept of stationary condition for a set-constrained optimization problem called the linearized M-stationary condition.
{Recently, sequential optimality conditions and constraint qualifications and their applications in numerical algorithms became a popular topic.  A suitable theory has been developed in the context of  MPEC in \cite{Andreani etal2019,Ramos2021}. Mehlitz \cite{Mehlitz2020} has generalized the underlying theories to an very general optimization problem which includes MPDC as a special case.}

In this paper, we will survey the aforementioned  results about new stationarity concepts  and sufficient conditions for metric subregularity for  disjunctive programs. We then apply these results to obtain various optimality conditions and local error bound results for MPSC. Moreover, we propose to use the local decomposition approach to study  sufficient conditions for the error bound property by the corresponding constraint qualifications for each branch as a standard nonlinear program (NLP).

%
%

The remainder of this paper is organized as follows. {In} Section 2, we  review some constraint qualifications from nonlinear programs,  and existing constraint qualifications and optimality conditions for MPSC.  In Section 3, we summarize the results that we need for disjunctive programs. In Section 4, we apply the results from Section 3 to MPSC. In Section 5, we derive the local error bound and exact penalty results for MPSC. 
In Section 6, we conclude our discussion and provide relationships among various constraint qualifications, error bound properties and stationary conditions.

Throughout the paper, for a differentiable mapping $c : \mathbb{R}^n \rightarrow \mathbb{R}^m$ and a vector $z\in \mathbb{R}^n$, we denote by $\nabla c(z)$ {the Jacobian} of $c$ at $z$.
For a  differentiable function $f : \mathbb{R}^n \rightarrow \mathbb{R}$, we denote by $\nabla f(z)$ its gradient vector and $\nabla^2 f(z)$ its Hessian matrix at $z$ provided that it is twice differentiable.
For a set ${\cal C}$, we denote by ${\cal C}^\circ:=\{x\ |\ x^Ty\leq0,\forall y\in{\cal C}\}$ its polar cone,  
and by ${\rm dist}_{\cal C}(x)$ the distance between  $x$ and ${\cal C}$. Unless otherwise specified,
$\|\cdot\|$ denotes an arbitrary norm in $\mathbb{R}^n$.

\section{Review of constraint qualifications and optimality conditions}
In this section, we first  recall some constraint qualifications for NLP. Then we review some existing constraint qualifications and optimality conditions for MPSC.
The reader is referred to  \cite{Mehlitz MP,Li-Guo} for those constraint qualifications for MPSC that are not reviewed here.

\subsection{Constraint qualifications for NLP}
Consider the standard nonlinear program
\begin{eqnarray}
\min f(z)  &&
{\rm s.t.}\  g(z)\leq 0,
\ h (z)=0,\ \label{standard NLP}
\end{eqnarray}
where ${f}:\mathbb{R}^n\to \mathbb{R}$, ${g}:\mathbb{R}^n\to\mathbb{R}^p$, ${h}:\mathbb{R}^n\to\mathbb{R}^q$ are continuously differentiable.
Denote by $\bar{\I}_{{g}}:={\I}_{{g}}(\bar z)=\{i\in \{1,\cdots,p\}|{g}_i(\bar{z})=0\}$ the index set of active inequality constraints at $\bar z$.
We recall some constraint qualifications for problem (\ref{standard NLP}) that we will refer to in this paper.

\begin{defi}\rm
Let $\bar{z}\in\mathbb{R}^n$ be a feasible point of problem (\ref{standard NLP}). We say that $\bar{z}$ satisfies
\begin{itemize}
\item[1.] {\em linear independence constraint qualification} (LICQ), if the family of gradients $\{\nabla {g}_i(\bar{z})\}_{i\in \bar{\I}_{{g}}}\cup\{\nabla {h}_i(\bar{z})\}_{i=1}^q$ is linearly independent;

\item[2.] {\em Mangasarian-Fromovitz constraint qualification} (MFCQ) \cite{MFCQ1967}, or equivalently {\em positive-linearly independent constraint qualification} (PLICQ) if the family of gradients $\{\nabla {g}_i(\bar{z})\}_{i\in \bar{\I}_{{g}}}\cup\{\nabla {h}_i(\bar{z})\}_{i=1}^q$ is positive-linearly independent, i.e. the family of gradients $\{\nabla {g}_i(\bar{z})\}_{i\in \bar{\I}_{{g}}}\cup\{\nabla {h}_i(\bar{z})\}_{i=1}^q$ is linearly independent with non-negative scalars associated to the gradients of the active inequality constraints;

\item[3.] {\em constant rank constraint qualification} (CRCQ) \cite{Janin1984}, if there exists a neighborhood $N(\bar{z})$ of $\bar z$ such
that for every $I\subseteq \bar{\I}_{{g}}$ and every $J \subseteq \{1,\cdots,q\}$, the family of gradients
$\{\nabla {g}_i (z)\}_{i\in I} \cup \{\nabla {h}_i (z)\}_{i\in J} $ has the same rank for every $z \in N(\bar{z})$;
\item[4.] {\em relaxed constant rank constraint qualification} (RCRCQ) \cite{minchenko-stakhovski2011}, if there exists a neighborhood $N(\bar{z})$ of $\bar z$ such
that for every $I\subseteq \bar{\I}_{{g}}$ , the family of gradients
$\{\nabla {g}_i (z)\}_{i\in I}\cup\{\nabla {h}_i (z)\}_{i=1}^q$ has the same rank for every $z \in N(\bar{z})$;
\item[5.] {\em constant positive linear dependence constraint
qualification} (CPLD) \cite{Qi-Wei2000}, if there exists a neighborhood $N(\bar{z})$ of $\bar z$ such
that for every $I\subseteq \bar{\I}_{{g}}$ and every $J \subseteq \{1,\cdots,q\}$, whenever the family of gradients $\{\nabla {g}_i(\bar z)\}_{i\in I} \cup \{\nabla {h}_i (\bar z)\}_{i\in J} $ is positive-linearly dependent, then
$\{\nabla {g}_i (z)\}_{i\in I} \cup\{\nabla {h}_i (z)\}_{i\in J}$ is linearly dependent for every $z \in N(\bar{z})$;
\item[6.] {\em relaxed constant positive linear dependence constraint
qualification} (RCPLD) \cite{Andreani etal 2012MP}, if there exists a neighborhood $N(\bar{z})$ of $\bar{z}$ such
that (i) $\{\nabla {h}_i (z)\}_{i=1}^q$ has the same rank for every $z \in N(\bar{z})$;
(ii) For every $I\subseteq \bar{\I}_{{g}}$, if the family of gradients $\{\nabla {g}_i(\bar z)\}_{i\in I} \cup \{\nabla {h}_i (\bar z)\}_{i\in J} $ is positive-linearly dependent,  where $J\subseteq \{1,\cdots,q\}$ is such that $\{\nabla {h}_i (\bar{z})\}_{i\in J}$ is {a} basis for span $\{\nabla {h}_i (\bar{z})\}_{i=1}^q$, then
$\{\nabla {g}_i (z)\}_{i\in I} \cup \{\nabla {h}_i (z)\}_{i\in J}$ is linearly dependent for every $z \in N(\bar{z})$;
\item[7.] {\em constant rank of subspace component }(CRSC)  \cite{Andreani etal 2012}, 
 if there exists a neighborhood $N(\bar{z})$ of $\bar{z}$ such
that the rank of $\{\nabla {g}_i(z)\}_{i\in \I^-}\cup\{\nabla {h}_i(z)\}_{i=1}^q$ remains constant for $z\in N(\bar{z})$, where $$\I^-=:\left \{l \in  \bar{\I}_{{g}} {\left|-\nabla g_l(\bar z)\in \left \{\sum_{i=1}^q \lambda_i \nabla h_i(\bar z)+\sum_{i\in  \bar{\I}_{{g}}\setminus \{l\}}  \mu_i \nabla g_i(\bar z)|\mu_i \geq 0,  i\in \bar{\I}_{{g}}\right\}\right.} \right \}.$$
\end{itemize}
\end{defi}
\begin{remark}  Let $L(\bar{z}):=\{d|\nabla g_i(\bar{z})d\leq0,\ i\in \bar \I_g,\nabla h_i(\bar{z})d=0,i=1,\cdots,q\}$ be the linearization cone of problem (\ref{standard NLP})  at $\bar{z}$.
Kruger al.  \cite{Kruger etal2014} pointed out that since the polar of the linearization cone is equal to
$$ L(\bar{z})^\circ= \left \{{\left.\sum_{i=1}^q \lambda_i \nabla h_i(\bar z)+\sum_{i\in  \bar{\I}_{{g}}}  \mu_i \nabla g_i(\bar z)\right|}\mu_i \geq 0,  i\in \bar{\I}_{{g}}\right \},$$
by the definition of the linearization cone, the index set $\I^-$ can be equivalently written as
$${\I^-=\{l\in \bar{\I}_g|\nabla g_l(\bar{z})^{{T}}d=0,\forall d\in L(\bar{z})\}.}$$
Hence in  \cite{Kruger etal2014},  CRSC is also called  {\em relaxed MFCQ}.
\end{remark}
\begin{defi} \rm(see e.g.
\cite{Solodov2011})
Let ${\F}_{NLP}$ be the feasible region of problem (\ref{standard NLP}).
We say that an {\em error bound} holds in a neighborhood $N(\bar{z})$ of a feasible point $\bar{z}\in{\F}_{NLP}$ if there exists $\alpha>0$ such that for every $z\in N(\bar{z})$
$${\rm dist}_{{\F}_{NLP}}(z)\leq\alpha\left(\sum_{i=1}^p\max\{g_i(z),0\}+\sum_{i=1}^q|h_i(z)|\right).$$
\end{defi}

{It is easy to see that the local error bound condition holds at $\bar{z}$ for NLP if and only if the feasibility mapping $z\rightrightarrows (g(z),h(z))-\mathbb{R}^p_-\times\{0\}^q$ is {\em metrically subregular} (see Definition \ref{definition of directional metric subregularity}) at $(\bar{z},0)$. }

Andreani al. \cite[Theorem 5.5]{Andreani etal 2012}  showed that  CRSC implies the existence of local error
bounds under the second-order differentiability of functions $g, h$. This assumption  was removed by Guo et al.   \cite{Guo-Zhang-Lin2014}.
Finally, we summarize
relations among constraint qualifications for NLP discussed in this subsection in Figure 1.
\begin{figure}%
\centering
		\scriptsize
		 \tikzstyle{format}=[rectangle,draw,thin,fill=white]
		 \tikzstyle{test}=[diamond,aspect=2,draw,thin]
		\tikzstyle{point}=[coordinate,on grid,]
		\begin{tikzpicture}
        \node[format](LICQ){LICQ};
        \node[format,right of=LICQ,node distance=15mm](MFCQ){MFCQ};
        \node[format,right of=MFCQ,node distance=15mm](CPLD){CPLD};
		\node[format,right of=CPLD,node distance=15mm](RCPLD){RCPLD};
       \node[format,right of=RCPLD,node distance=15mm](CRSC){CRSC};
       \node[format, below of=MFCQ,node distance=7mm](CRCQ){CRCQ};
       \node[format,right of=CRCQ,node distance=15mm](RCRCQ){RCRCQ};
       \node[format,right of=CRSC,node distance=27mm](error bound){error bound/metric subregular};
\draw[->](LICQ)--(MFCQ);
\draw[->](MFCQ)--(CPLD);
\draw[->](CPLD)--(RCPLD);
\draw[->](RCPLD)--(CRSC);
\draw[->](LICQ)--(CRCQ);
\draw[->](CRCQ)--(RCRCQ);
\draw[->](CRCQ)--(CPLD);
\draw[->](RCRCQ)--(RCPLD);
\draw[->](CRSC)--(error bound);
\end{tikzpicture}

\centering{Fig.1  Relation among constraint qualifications for NLPs}
\end{figure}

\subsection{Constraint qualifications and optimality conditions for MPSC}\label{s2.2}
In this paper, we consider the following MPSC:
\begin{eqnarray}
\min   & & f(z) \nonumber\\
\mbox{s.t.} & & g(z)\le 0,  \ h(z)=0, \quad G_i(z) H_i(z)=0,\ i=1,\cdots,m \label{MPSC}
\end{eqnarray}
where $f:\mathbb{R}^n\rightarrow \mathbb{R},~g:\mathbb{R}^n\rightarrow \mathbb{R}^p,~h:\mathbb{R}^n\rightarrow \mathbb{R}^q,~G_1,\cdots,G_m:\mathbb{R}^n\rightarrow \mathbb{R},~H_1\cdots,H_m:\mathbb{R}^n\rightarrow \mathbb{R}$. We assume that unless otherwise specified, all defining functions are continuously  differentiable.
Let ${\cal F}$ denote the feasible region of (\ref{MPSC}). For
a feasible point $z^*\in\F$ , we define some useful index sets as follows:
\begin{eqnarray*}
&&{\cal I}_g^*:={\cal I}_g(z^*)=\{i\in\{1,\cdots,p\}\ |\ g_i(z^*)=0\}, \\
&&{\cal I}_G^*:={\cal I}_G(z^*)=\{i\in\{i,\cdots,m\}\ |\ G_i(z^*)=0,\ H_i(z^*)\neq 0\},\\
&&{\cal I}_H^*:={\cal I}_H(z^*)=\{i\in\{i,\cdots,m\}\ |\ G_i(z^*)\neq 0,\ H_i(z^*)=0\},\\
&&{\cal I}_{GH}^*:={\cal I}_{GH}(z^*)=\{i\in\{i,\cdots,m\}\ |\ G_i(z^*)=H_i(z^*)=0\}.
\end{eqnarray*}
Since by \cite[Lemma 4.1]{Mehlitz MP},  MPSC never {satisfies} MFCQ at a feasible point $z^*$ with  $ {\cal I}_{GH}^*\not =\emptyset$, Mehlitz \cite{Mehlitz MP}  defined and studied the following alternative stationarity concepts.
\begin{defi}\rm\cite{Mehlitz MP} \label{WMS}
We say that $z^*\in\F$ is a {\em weakly  stationary} (W-stationary) point of MPSC (\ref{MPSC}) if there exist multipliers $(\lambda^g,\lambda^h,\lambda^G,\lambda^H)$ such that
\begin{eqnarray}
&&\nabla f(z^*)+\sum_{i\in\I_g^*}\lambda^g_i\nabla g_i(z^*)+\sum_{i=1}^q\lambda^h_i\nabla h_i(z^*)+\sum_{i=1}^m(\lambda^G_i\nabla G_i(z^*)+\lambda^H_i\nabla H_i(z^*))=0,\label{W-stationary1}\\
&&\lambda^g_i\geq 0,\ i\in\I_g^*,
\lambda^G_i=0,\ i\in\I_H^*,
\lambda^H_i=0,\ i\in\I_G^*.\label{W-stationary4}
\end{eqnarray}

We say that $z^*\in\F$ is a {\em Mordukhovich stationary} (M-stationary) point of MPSC (\ref{MPSC}) if there exist multipliers $(\lambda^g,\lambda^h,\lambda^G,\lambda^H)$ such that (\ref{W-stationary1})--(\ref{W-stationary4}) hold and
$\lambda^G_i\lambda^H_i=0,\ i\in\I_{GH}^*.$ Moreover, we call $(\lambda^g,\lambda^h,\lambda^G,\lambda^H)$ an M-multiplier.

We say that $z^*\in\F$ is a {\em strongly stationary} (S-stationary) point of MPSC (\ref{MPSC}) if there exist multipliers $(\lambda^g,\lambda^h,\lambda^G,\lambda^H)$ such that (\ref{W-stationary1})--(\ref{W-stationary4}) hold and
$\lambda^G_i=\lambda^H_i=0,\ i\in\I_{GH}^*.$ Moreover, we call $(\lambda^g,\lambda^h,\lambda^G,\lambda^H)$ an S-multiplier.
\end{defi}

Consider the associated  tightened nonlinear problem at $z^*\in\F$:
\begin{eqnarray*}
({\rm TNLP})\ \min &&f(z)\\
{{\rm s.t.}}&&g(z)\leq 0,
h(z)=0, G_i(z)=0,\ i\in\I_G^*\cup\I_{GH}^*,H_i(z)=0,\ i\in\I_H^*\cup\I_{GH}^*.
\end{eqnarray*}

\begin{defi}\rm\cite{Mehlitz MP}\label{definition of MPSC LICQ and MFCQ}
Let $z^*$ be a feasible point of MPSC (\ref{MPSC}). We say that $z^*$ satisfies
MPSC-LICQ/-MFCQ, if LICQ/MFCQ holds for (TNLP) at $z^*$.
%
%
\end{defi}
\begin{defi}\rm\label{New CQ definition-1}
Let $z^*$ be a feasible point of MPSC (\ref{MPSC}). We say that $z^*$  satisfies
 MPSC-CRCQ/-CPLD, if CRCQ/CPLD holds for (TNLP) at $z^*$.
\end{defi}

\begin{remark}
The  MPSC-CRCQ/-CPLD defined in Definition \ref{New CQ definition-1} coincides with those defined in \cite[Definition 4.2]{Li-Guo}. The advantage of defining these constraint qualifications as the corresponding ones for the tightened nonlinear program (TNLP) is that we can immediately conclude from the definitions of CRCQ and CPLD for nonlinear programs that MPSC-CRCQ implies MPSC-CPLD without proof as in (i) of the proof for
 \cite[Theorem 4.2]{Li-Guo}.
\end{remark}

\begin{defi}[MPSC-RCPLD]\rm \cite{Li-Guo}
Let $z^*$ be a feasible point of MPSC (\ref{MPSC}). We say that $z^*$  satisfies
 MPSC-RCPLD if there exists a neighborhood $N(z^*)$ of $z^*$ such that
 \begin{itemize}
 \item[(i)] The vectors $\{\nabla h_i(z)\}_{i=1}^q\cup \{ \nabla G_i(z)\}_{i\in \mathcal{I}_G^*} \cup  \{ \nabla H_i(z)\}_{i\in \mathcal{I}_H^*}$ have the same rank for all $z$ in $N(z^*)$;
 \item[(ii)]
 Let $I_1\subseteq\{1,2,\cdots,q\},I_2\subseteq\I_G^*,I_3\subseteq\I_H^*$ be index sets such that the set of vectors
 $\{\nabla h_i(z^*)\}_{i\in I_1}\cup \{ \nabla G_i(z^*)\}_{i\in I_2} \cup  \{ \nabla H_i(z^*)\}_{i\in I_3}$ is a basis for span {$(\{\nabla h_i(z^*)\}_{i=1}^q, \{ \nabla G_i(z^*)\}_{i\in \mathcal{I}_G^*} ,  \{ \nabla H_i(z^*)\}_{i\in \mathcal{I}_H^*})$}.  For each $I_4\subseteq\I_g^*,$ $I_5,I_6\subseteq\I_{GH}^*$, if there exist $\{\lambda^g,\lambda^h,\lambda^G,\lambda^H\}$ not all zero, with $\lambda^g_i\geq0$ for each $i\in I_4$ and $\lambda^G_i\lambda^H_i=0$ for each $i\in\I_{GH}^*$, such that
\begin{eqnarray*}
\sum_{i\in I_4}\lambda^g_i\nabla g_i(z^*)+\sum_{i\in I_1}\lambda^h_i\nabla h_i(z^*)+\sum_{i\in I_2\cup I_5}\lambda^G_i\nabla G_i(z^*)+\sum_{i\in I_3\cup I_6}\lambda^H_i\nabla H_i(z^*)=0,
\end{eqnarray*}
then for any $z\in N(z^*)$, the set of vectors
$$\left \{\nabla g_i(z) \right \}_{i\in I_4} \cup \{\nabla h_i(z)\}_{i\in I_1}\cup\{\nabla G_i(z)\}_{i\in I_2\cup I_5}\cup
\{\nabla H_i(z)\}_{i\in I_3\cup I_6}$$ {is} linearly dependent.
\end{itemize}
\end{defi}

We now gather  constraint  conditions and necessary optimality conditions from \cite{Mehlitz MP,Li-Guo} in the following theorem.
One can find the definition of  MPSC No Nonzero Abnormal Multiplier Constraint Qualification (MPSC-NNAMCQ)  and MPSC quasi/pseudo-normality from the comments after Definitions \ref{Defi4.3} and \ref{Defi4.4}  respectively.
\begin{thm}{\rm \cite{Mehlitz MP,Li-Guo}}
Let $z^*$ be feasible for problem (\ref{MPSC}). If  MPSC-LICQ is fulfilled at $z^*$, then $z^*$ is  S-stationary. If {MPSC-MFCQ/-CPLD/-CRCQ/-RCPLD/-NNAMCQ/-quasi/-pseudo-normality}  is fulfilled at $z^*$, then $z^*$ is M-stationary.
\end{thm}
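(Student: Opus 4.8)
The plan is to prove each implication by reducing it to a known stationarity result for the tightened nonlinear program (TNLP), since all the constraint qualifications in the statement are \emph{defined} as the corresponding NLP constraint qualifications applied to (TNLP). First I would observe that $z^*$ is feasible for (TNLP): indeed for $i\in\I_G^*$ we have $G_i(z^*)=0$, for $i\in\I_H^*$ we have $H_i(z^*)=0$, and for $i\in\I_{GH}^*$ we have $G_i(z^*)=H_i(z^*)=0$, so all the added equality constraints of (TNLP) are satisfied, and the $g,h$ constraints are inherited from feasibility for (\ref{MPSC}). Moreover, $z^*$ is a \emph{local minimizer} of (TNLP): locally the feasible set of (TNLP) is contained in the feasible set $\F$ of MPSC, because any $z$ with $G_i(z)=0$ ($i\in\I_G^*\cup\I_{GH}^*$) and $H_i(z)=0$ ($i\in\I_H^*\cup\I_{GH}^*$) satisfies $G_i(z)H_i(z)=0$ for every $i$ (for $i\in\I_G^*$ one factor vanishes, for $i\in\I_H^*$ the other, for $i\in\I_{GH}^*$ both). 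Hence if $z^*$ solves MPSC locally, it solves (TNLP) locally.

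Next I would invoke the classical NLP theory. For the LICQ case: under LICQ for (TNLP) at $z^*$, the KKT conditions hold for (TNLP), i.e.\ there are multipliers $\lambda^g_i\ge0$ ($i\in\I_g^*$), $\lambda^h_i$ ($i=1,\dots,q$), $\lambda^G_i$ ($i\in\I_G^*\cup\I_{GH}^*$), $\lambda^H_i$ ($i\in\I_H^*\cup\I_{GH}^*$) with
\[
\nabla f(z^*)+\sum_{i\in\I_g^*}\lambda^g_i\nabla g_i(z^*)+\sum_{i=1}^q\lambda^h_i\nabla h_i(z^*)+\sum_{i\in\I_G^*\cup\I_{GH}^*}\lambda^G_i\nabla G_i(z^*)+\sum_{i\in\I_H^*\cup\I_{GH}^*}\lambda^H_i\nabla H_i(z^*)=0.
\]
Setting $\lambda^G_i:=0$ for $i\in\I_H^*$ and $\lambda^H_i:=0$ for $i\in\I_G^*$, and $\lambda^G_i:=\lambda^H_i:=0$ automatically being \emph{not} required here, we rewrite this as (\ref{W-stationary1}); conditions (\ref{W-stationary4}) hold by the sign restriction and the zero-padding; and since the multipliers $\lambda^G_i,\lambda^H_i$ for $i\in\I_{GH}^*$ are both genuinely present (both $G_i=0$ and $H_i=0$ are constraints of (TNLP)), we get $\lambda^G_i=\lambda^H_i=0$ is \emph{not} forced, so this is exactly S-stationarity (where no product/zero condition on $\I_{GH}^*$ is imposed beyond the definition). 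For the other cases (MFCQ, CPLD, CRCQ, RCPLD, NNAMCQ, quasi-/pseudo-normality), the corresponding NLP result is that the constraint qualification implies the local minimizer is a KKT point of (TNLP) — this is standard for MFCQ and CRCQ/CPLD/RCPLD (Andreani et al., Qi--Wei, Janin, etc.) and holds for NNAMCQ and quasi-/pseudo-normality by the Bai--Ye--Zhang / Benko et al.\ theory cited earlier. The resulting (TNLP)-KKT multipliers, after the same zero-padding over $\I_H^*,\I_G^*$, give a W-stationary tuple; it remains to check the extra M-stationarity condition $\lambda^G_i\lambda^H_i=0$ for $i\in\I_{GH}^*$, which does not come for free from (TNLP)-KKT.

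The main obstacle — and the only genuinely MPSC-specific step — is precisely bridging from S-type multipliers of (TNLP) to the M-stationarity sign/complementarity pattern $\lambda^G_i\lambda^H_i=0$ on $\I_{GH}^*$ under the weaker CQs. The clean way to handle this is not to go through (TNLP) directly but through the disjunctive-program reformulation of MPSC developed in Section 3: MPSC is a disjunctive program, its M-stationarity (in the Flegel--Kanzow--Outrata sense) \emph{is} exactly the MPSC M-stationarity of Definition~\ref{WMS}, and each of MPSC-MFCQ, -CPLD, -CRCQ, -RCPLD, -NNAMCQ, -quasi-/pseudo-normality is designed to imply metric subregularity of the MPSC feasibility mapping (equivalently the local error bound), which in turn yields the M-stationarity condition for any local minimizer via the standard exact-penalty/limiting-calculus argument. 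So in the write-up I would: (1) dispatch the LICQ$\Rightarrow$S-stationary claim by the (TNLP)-KKT argument above; (2) for the rest, cite that each listed CQ implies the MPSC-tailored metric subregularity / error bound (this is the content collected from \cite{Mehlitz MP,Li-Guo} and the disjunctive-program results of Section 3), and then apply the M-stationarity theorem for metrically subregular disjunctive programs. I expect step (2)'s verification that the \emph{relaxed} constant-rank-type conditions (CPLD, RCPLD) still deliver the $\I_{GH}^*$ complementarity — rather than just some weaker W-stationary multiplier — to be where the real care is needed, and it is exactly there that one leans on the local decomposition of the switching constraint into its branches.
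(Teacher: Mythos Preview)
The paper does not supply its own proof of this theorem; it is stated as a summary of results taken from \cite{Mehlitz MP,Li-Guo}. So there is no paper argument to compare against, and the question is whether your sketch is correct on its own terms.

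Your MPSC-LICQ $\Rightarrow$ S-stationary argument contains a genuine error: you have the definition of S-stationarity backwards. By Definition~\ref{WMS}, S-stationarity \emph{requires} $\lambda^G_i=\lambda^H_i=0$ for every $i\in\I_{GH}^*$; it is W-stationarity that leaves these multipliers free. What you obtain from (TNLP)-KKT --- multipliers satisfying (\ref{W-stationary1})--(\ref{W-stationary4}) with $\lambda^G_i,\lambda^H_i$ unrestricted on $\I_{GH}^*$ --- is precisely W-stationarity, not S-stationarity. To upgrade to S-stationarity under MPSC-LICQ one needs an additional step: e.g.\ note that MPSC-LICQ implies LICQ for every branch ${\rm NLP}(\beta_1,\beta_2)$, hence each branch has a \emph{unique} KKT multiplier; comparing the (unique) (TNLP)-multiplier with those of the two extreme branches $(\I_{GH}^*,\emptyset)$ and $(\emptyset,\I_{GH}^*)$ forces $\lambda^H_i=0$ and $\lambda^G_i=0$ on $\I_{GH}^*$ respectively. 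Equivalently, one uses the disjunctive reformulation and the fact that $\widehat N_{\Omega_{SC}}(0,0)=\{(0,0)\}$ (Lemma~\ref{normal cone lema}).

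Your plan for the M-stationarity implications is in the right spirit, and you correctly flag that (TNLP)-KKT alone does not deliver $\lambda^G_i\lambda^H_i=0$ on $\I_{GH}^*$. Just be aware that the listed CQs are not all routed through metric subregularity in the cited papers: for MPSC-CRCQ/-CPLD/-RCPLD the arguments in \cite{Li-Guo} go via sequential (AM-)stationarity rather than a single ``CQ $\Rightarrow$ error bound'' lemma, so your write-up should cite the appropriate result for each CQ rather than asserting a uniform subregularity step.
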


\section{Optimality conditions for mathematical programs with disjunctive constraints}\label{sec3}
In this section we review some  optimality conditions for the mathematical programs with disjunctive constraints (MPDC):
\begin{eqnarray}\label{disjunctive constraints}
\min &&f(z)
\quad {\rm s.t.} \ P(z)\in\Lambda,
\end{eqnarray}
where $f:\mathbb{R}^n \rightarrow \mathbb{R}$, $ P:\mathbb{R}^n \rightarrow \mathbb{R}^{m}$ are continuously differentiable,  and $\Lambda \subseteq \mathbb{R}^{m}$ is the union of finitely many convex polyhedral sets. We denote the feasible region by ${\cal F}_{D}:=\left \{z\in \mathbb{R}^{n}| P(z)\in \Lambda \right \}$ and the linearization cone by $$L_{{\cal F}_D}^{\rm lin}(z^*):=\{ d\in \mathbb{R}^{n}|\nabla P(z^*) d \in T_\Lambda (P(z^*))\}.$$}

To study the mathematical program with disjunctive constraints (\ref{disjunctive constraints}), we need to study various tangent cones and normal cones to set $\Lambda$. First we recall definitions of tangent cones and normal cones.
Suppose that $A\subseteq \mathbb{R}^m$ is closed and  $x^*\in A$.
The tangent/Bouligand  cone, the Fr\'{e}chet/regular and the limiting/basic/Mordukhovich normal cone to $A$ at $x^*$ are defined by
\begin{eqnarray*}
T_{A}(x^*)
&:=&\left \{d\in \mathbb{R}^m\ |\ \exists\ t_k\downarrow 0,\  d^k\rightarrow d\ \text{such\ that}\ x^*+t_k d^k\in A\right \},\\
\widehat{N}_A(x^*)&:=&\left \{\zeta\in\mathbb{R}^m| \langle \zeta, x-x^*\rangle \leq o(\|x-x^*\|) \quad \forall x \in A \right \},\\
N_A(x^*)&:=& \left \{\zeta\in\mathbb{R}^m\  {\left|\ \exists\ \{x^k\}\subseteq A,\ \exists \zeta^k\ {\rm such\ that}\ x^k\to x^*,\ \zeta^k\to \zeta,\ \zeta^k\in\widehat{N}_A(x^k) \right.}\right \},
\end{eqnarray*}
respectively,  see e.g. \cite{Rockafellar}. When $A$ is convex, all the  normal cones above are equal and they coincide with the normal cone in the sense of convex analysis.

Using various  normal cones, some stationary conditions were introduced; cf \cite[Definition 1]{Flegel etal 2007}, \cite[Definition 3]{Benko-Gfrerer2017}.
\begin{defi}\rm Let $z^*\in {\cal F}_D$.
\begin{itemize}
\item[(a)] We say that $z^*$ is B-stationary (Bouligand stationary) if
$0\in  \nabla f(z^*)+ \widehat{N}_{{\cal F} _D}(z^*).$
\item[(b)] We say that $z^*$ is S-stationary (Strongly stationary) if
$ 0\in  \nabla f(z^*)+  \nabla P(z^*)^T \widehat{N}_\Lambda( P(z^*)).$
\item[(c)] We say that $z^*$ is M-stationary (Mordukhovich stationary) if
$0\in  \nabla f(z^*)+ \nabla P(z^*)^T N_\Lambda (P(z^*)).$
\end{itemize}
\end{defi}
\begin{defi}\rm Let $z^*\in {{\cal F} _D}$. We say that the generalized Guignard constraint qualification (GGCQ) holds at $z^*$ if
\begin{equation}\widehat{N}_{{\cal F} _D}(z^*)=(L_{{\cal F} _D}^{\rm lin}(z^*))^\circ. \label{GGCQ}\end{equation}
\end{defi}

GGCQ is a rather weak constraint qualification. For example,  it holds  if the set-valued map $F(z):=-P(z)+\Lambda$ is metrically subregular at $z^*$ (see Definition \ref{definition of directional metric subregularity}).

The following necessary optimality conditions are well known. Proposition \ref{prop3.1}(a) follows from the well-known fact that any local optimizer has no feasible descent directions and the fact that $(T_{{\cal F} _D}(z^*))^\circ=\widehat{N}_{{\cal F} _D}(z^*)$. Proposition \ref{prop3.1}(b) follows from  (a) and the change of coordinates formula in \cite[Exercise 6.7]{Rockafellar}.
\begin{prop}\label{prop3.1} Let $z^*$ be a local optimal solution of problem (\ref{disjunctive constraints}).  Then
\begin{itemize}
\item[(a)]  $z^*$ is B-stationary.
\item[(b)] If $\nabla P(z^*)$ has full rank $m$, then $z^*$ is S-stationary.
\item[(c)]\cite[Theorem 7]{Flegel etal 2007} Suppose GGCQ holds at $z^*$. Then $z^*$ is M-stationary.
\end{itemize}
\end{prop}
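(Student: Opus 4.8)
The plan is to prove the three parts in order, each building on the previous one, using only elementary variational-analytic facts together with the coordinate-change rule for regular normal cones.

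\emph{Part (a).} I would start from the standard fact that if $z^*$ is a local minimizer of a differentiable function $f$ over a closed set ${\cal F}_D$, then $-\nabla f(z^*)\in\widehat N_{{\cal F}_D}(z^*)$. The quickest route: if this failed, there would exist $d\in T_{{\cal F}_D}(z^*)$ with $\langle\nabla f(z^*),d\rangle<0$, since $(T_{{\cal F}_D}(z^*))^\circ=\widehat N_{{\cal F}_D}(z^*)$ (a general relation between the Bouligand tangent cone and the regular normal cone for any closed set). Picking sequences $t_k\downarrow0$, $d^k\to d$ with $z^*+t_kd^k\in{\cal F}_D$, a first-order Taylor expansion gives $f(z^*+t_kd^k)=f(z^*)+t_k\langle\nabla f(z^*),d\rangle+o(t_k)<f(z^*)$ for large $k$, contradicting local optimality. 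Hence $0\in\nabla f(z^*)+\widehat N_{{\cal F}_D}(z^*)$, i.e. $z^*$ is B-stationary.

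\emph{Part (b).} Here I would invoke the change-of-variables formula for regular normal cones from \cite[Exercise 6.7]{Rockafellar}: if $P$ is smooth and $\nabla P(z^*)$ is surjective (full rank $m$), then
\[
\widehat N_{{\cal F}_D}(z^*)=\widehat N_{P^{-1}(\Lambda)}(z^*)=\nabla P(z^*)^T\,\widehat N_\Lambda(P(z^*)).
\]
Substituting this into the B-stationarity condition from part (a) immediately yields $0\in\nabla f(z^*)+\nabla P(z^*)^T\widehat N_\Lambda(P(z^*))$, which is exactly S-stationarity. The only thing to check is that the hypotheses of the cited exercise are met, namely smoothness of $P$ and surjectivity of its Jacobian at $z^*$; both are given.

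\emph{Part (c).} This is the part quoted directly from \cite[Theorem 7]{Flegel etal 2007}, so I would reduce it to part (a) plus GGCQ plus a normal-cone intersection estimate. From part (a), $-\nabla f(z^*)\in\widehat N_{{\cal F}_D}(z^*)=(L_{{\cal F}_D}^{\rm lin}(z^*))^\circ$, the last equality being GGCQ. Now $(L_{{\cal F}_D}^{\rm lin}(z^*))^\circ=\bigl(\{d:\nabla P(z^*)d\in T_\Lambda(P(z^*))\}\bigr)^\circ$; since $\Lambda$ is a union of finitely many convex polyhedra, $T_\Lambda(P(z^*))$ is a union of finitely many polyhedral cones, and one can compute the polar of the preimage cone to be $\nabla P(z^*)^T N_\Lambda(P(z^*))$, using that for a polyhedral set the limiting normal cone to $\Lambda$ equals the polar of its tangent cone and that polarity interacts correctly with the linear map $\nabla P(z^*)$ when $T_\Lambda$ is polyhedral (no constraint qualification needed on the linear-algebra side precisely because of polyhedrality). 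Combining, $-\nabla f(z^*)\in\nabla P(z^*)^T N_\Lambda(P(z^*))$, i.e. M-stationarity.

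\emph{Main obstacle.} Parts (a) and (b) are routine. The one step that needs genuine care is the polarity computation in part (c): verifying that
\[
\bigl(\{d:\nabla P(z^*)d\in T_\Lambda(P(z^*))\}\bigr)^\circ=\nabla P(z^*)^T N_\Lambda(P(z^*))
\]
when $T_\Lambda(P(z^*))$ is a finite union of polyhedral convex cones. For a \emph{single} polyhedral cone $K$ one has $(\{d:Ad\in K\})^\circ=A^T K^\circ$ by Farkas/strong duality for polyhedra; the union case requires distributing the polar over the union of the branches and checking the finite-union formula $N_\Lambda(x^*)=\bigcup_i N_{\Lambda_i}(x^*)$ for the branches $\Lambda_i$ active at $x^*$. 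That bookkeeping, rather than any deep fact, is where I would spend the effort — or, more honestly, simply cite \cite[Theorem 7]{Flegel etal 2007} as the excerpt does and present (a)–(b) in full.
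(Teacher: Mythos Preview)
Your argument for (a) and (b) is exactly the paper's: (a) from the no-feasible-descent-direction fact together with $(T_{{\cal F}_D}(z^*))^\circ=\widehat N_{{\cal F}_D}(z^*)$, and (b) from (a) plus the change-of-coordinates formula \cite[Exercise 6.7]{Rockafellar}; for (c) the paper offers no proof at all and simply cites \cite[Theorem 7]{Flegel etal 2007}, which is also your fallback recommendation. One small caution on your sketch for (c): the polar of a \emph{union} of cones is the \emph{intersection} of the polars, so after writing $T_\Lambda(P(z^*))=\bigcup_i K_i$ you land on $\bigcap_i\nabla P(z^*)^T K_i^\circ$ rather than a union, and the remaining step is to embed this intersection into $\nabla P(z^*)^T N_\Lambda(P(z^*))$ --- that is indeed where the finite-polyhedral structure does real work, as you anticipated.
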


Recently some MPDC-tailored versions of LICQ have been introduced in \cite[Definition 3.1]{Mehlitz2020optimization} and
 in \cite[(31)]{GfrererYeZhou} (see also Definition \ref{definition of MPSC LICQ(d)}). These conditions all ensure that a local optimal solution is S-stationary.

It is easy to see that although B-stationary condition does not need any constraint qualification, it is implicit and hence not easy to use.  S-stationary condition is sharper than  M-stationary condition but requires very strong constraint qualification to hold.  M-stationary condition is necessary for optimality under very weak constraint qualification but it can be very weak for certain  problems. {Recently, some stationary conditions weaker than B-stationarity but stronger than M-stationarity have been introduced.} We now review these results.

{For problems in the form (\ref{disjunctive constraints}) but with $\Lambda$ being  an arbitrary  closed set,  the limiting normal cone in M-stationary condition can be hard to compute and the resulting M-stationary condition can be weak. In order to deal with this difficulty, Gfrerer \cite{Gfrerer2019} introduced the so-called linearized M-stationary condition by a repeated linearization procedure. We now apply the linearization procedure to our problem. Suppose that $z^*$ is  B-stationary  for problem (\ref{disjunctive constraints}) with $\Lambda$ being  a closed set and
 GGCQ holds at $z^*$. Then since $-\nabla f(z^*)\in  \widehat{N}_{{\cal F} _D}(z^*) $,  by (\ref{GGCQ})
 the point $d^*=0$ is a global minimizer for the linearized problem
\begin{equation} \label{Linearizedproblem} \min_d  \nabla f(z^*)^Td \mbox{ subject to  } \nabla P(z^*) d \in T_\Lambda (P(z^*)).\end{equation}
If  a constraint qualification holds, then  M-stationary condition  for  the  above linearized problem holds  at $d^*=0$.  In our case since $\Lambda$ is the union of finitely many convex polyhedral sets, the perturbed feasible  map ${{\cal F} _D}(d):= \nabla P(z^*) d - T_\Lambda (P(z^*))$ is metric subregular at $(0,0)$  and hence GGCQ holds automatically.  Then by Proposition \ref{prop3.1}(c), $d^*=0$ is an M-stationary point of the linearized problem which means that
 \begin{equation}\label{LinearM} 0\in \nabla f(\z^*) +\nabla P(z^*)^T  {N}_{T_\Lambda(P(z^*))} (0)
.\end{equation}
The linearization procedure would continue if $T_\Lambda(P(z^*))$ is not the union of finitely many convex polyhedral sets,  until a series of tangent cones to tangent cones to the set $\Lambda$ is the union of finitely many convex polyhedral sets. The resulting optimality condition is called a linearized M-stationary condition.
In general,  the linearized M-stationary condition is sharper  than M-stationary condition. To see this, suppose $T_\Lambda(P(z^*))$ is the union of finitely many convex polyhedral sets and the original set $\Lambda$ is not.  Then the linearized M-stationary condition is (\ref{LinearM}). Since  ${N}_{T_\Lambda(P(z^*))} (0)\subseteq N_\Lambda(P(z^*))$, cf.  \cite[Proposition 6.27]{Rockafellar}, the linearized M-stationary condition is sharper  than M-stationary condition. Moreover,
${N}_{T_\Lambda(P(z^*))} (0)$ would be easier to calculate than the normal cone $N_\Lambda(P(z^*))$ in this case.
But since in our case,} $\Lambda$ is the union of finitely many convex polyhedral sets, ${N}_{T_\Lambda(P(z^*))} (0)=N_\Lambda(P(z^*))$ (see  \cite[p. 59]{Henrion}. Hence the linearized M-stationary condition coincides with  M-stationary condition for the disjunctive program.


{Another approach taken by Benko and Gfrerer in \cite{Benko-Gfrerer2017} to obtain sharper stationary condition than M-stationary condition for problems in the form (\ref{disjunctive constraints})  but with $\Lambda$ being  an arbitrary  closed set is  to give an accurate estimate for  the regular normal cone  to the constraint system.
The idea is as follows. Let $Q_1,Q_2\subseteq T_\Lambda(P(z^*))$ be two closed convex cones. Then
$$\nabla  P(z^*)^{-1}Q_i \subseteq \nabla P(z^*)^{-1}T_\Lambda(P(z^*))=L_{{\cal F} _D}^{\rm lin}(z^*),\quad i=1,2,$$
where
$\nabla P(z^*)^{-1}Q_i:=\{ d| \nabla P(z^*) d\in Q_i\}.$
Therefore, if GGCQ holds at $z^*$, we have
\begin{eqnarray*}
\widehat{N}_{{\cal F} _D}(z^*) &=& (L_{{\cal F} _D}^{\rm lin}(z^*))^\circ \\
&\subseteq & (\nabla  P(z^*)^{-1}Q_1 \cup \nabla  P(z^*)^{-1}Q_2)^\circ\\
&=& (\nabla  P(z^*)^{-1}Q_1)^\circ  \cap ( \nabla  P(z^*)^{-1}Q_2)^\circ \qquad \mbox{ since $Q_1$ and $Q_2$ are convex cones}.
\end{eqnarray*}
Moreover, suppose the following condition holds:
\begin{equation}
(\nabla P(z^*)^{-1}Q_i)^\circ=\nabla P(z^*)^TQ_i^\circ,i=1,2. \label{validity} \end{equation}
Then it follows that
\begin{equation}\widehat{N}_{{\cal F} _D}(z^*) \subseteq  (\nabla  P(z^*)^T Q_1 ^\circ ) \cap ( \nabla  P(z^*)^T Q_2^\circ)= \nabla  P(z^*)^T \left(Q_1 ^\circ \cap ( \ker \nabla  P(z^*)^T+ Q_2^\circ)\right),\label{tight}\end{equation}
where  $\ker\nabla P(z^*)^T:=\{r| P(z^*)^Tr=0\}$ and the equality follows from \cite[Lemma 1]{Benko-Gfrerer2017}. The right hand side of the inclusion (\ref{tight})  gives an upper estimate for $\widehat{N}_{{\cal F} _D}(z^*)$.
In order to have that the above inclusion provides a good estimate for the regular normal cone, it is obvious that we want to choose $Q_1, Q_2$ as large as possible so that the inclusion is tight. Furthermore, since one always has $\nabla P(z^*)^T \widehat N_\Lambda(P(z^*)) \subseteq \widehat{N}_{{\cal F} _D}(z^*)$ (\cite[Theorem 6.14]{Rockafellar}), if
\begin{equation}\label{sufficentc}
\nabla  P(z^*)^T \left(Q_1 ^\circ \cap ( \ker \nabla  P(z^*)^T+ Q_2^\circ) \right)\subseteq \nabla P(z^*)^T \widehat N_\Lambda(P(z^*)),
\end{equation}
then the equality holds in (\ref{tight}) and consequently,
\begin{equation}\widehat{N}_{{\cal F} _D}(z^*) =\nabla  P(z^*)^T \left(Q_1 ^\circ \cap ( \ker \nabla  P(z^*)^T+ Q_2^\circ) \right)=\nabla P(z^*)^T \widehat N_\Lambda(P(z^*)). \label{B-S} \end{equation}  How to choose $Q_1,Q_2$ satisfying the condition (\ref{sufficentc})?  One good choice is to find $Q_1,Q_2$ satisfying
$$Q_1^\circ \cap Q_2^\circ=\widehat{N}_\Lambda (P(z^*)),$$ since then condition (\ref{sufficentc}) holds whenever $\nabla P(z^*)$ has full rank.

Based on  the estimates of the regular normal cone in (\ref{tight}) and the fact that any local minimizer  is an B-stationary point, Benko and Gfrerer in \cite{Benko-Gfrerer2017} introduced the concept of the so-called $\mathcal{Q}$-stationarity. Moreover when an $\mathcal{Q}$-stationary point is also an M-stationary point, then they call it an $\mathcal{Q}_M$-stationary point.
In our case, since $T_\Lambda(P(z^*))$ is the union of finitely many convex polyheral sets, we can choose $Q_1,Q_2$ to be closed convex polyhedral cones.  By \cite[Proposition 1]{Benko-Gfrerer2017}, the polyhedrality of the cones $Q_i\subseteq T_\Lambda(P(z^*)), i=1,2$ ensures validity of (\ref{validity}).
 We now give definition for $\mathcal{Q}$-stationarity for the disjunctive program. }
\begin{defi}{\rm(\cite[Definiton 4 and Lemma 2]{Benko-Gfrerer2017})\label{definition of Q-statioanry}
Let $\mathcal{Q}$ denote some collection of pairs $(Q_1,Q_2)$ of closed convex polyhedral cones fulfilling
$Q_i\subseteq T_\Lambda(P(z^*)), i=1,2$.
\begin{itemize}
\item[(i)] Given $(Q_1,Q_2)\in\mathcal{Q}$, we say that $z^*$ is $\mathcal{Q}$-stationary with respect to $(Q_1,Q_2)$ for program (\ref{disjunctive constraints}) if
    \begin{equation}
    0\in\nabla f(z^*)+\nabla P(z^*)^T\left(Q_1^\circ\cap(\ker\nabla P(z^*)^T+Q_2^\circ)\right).\nonumber
    \end{equation}

\item[(ii)] We say that $z^*$ is $\mathcal{Q}$-stationary for program (\ref{disjunctive constraints}), if $z^*$ is $\mathcal{Q}$-stationary with respect to some pair $(Q_1,Q_2)\in\mathcal{Q}$.
\item [(iii)] We say that $z^*$ is $\mathcal{Q}_M$-stationary provided that  $z^*$ is both M-stationary and $\mathcal{Q}$-stationary with respect to some pair $(Q_1,Q_2)\in\mathcal{Q}$, i.e.,  there exists a pair $(Q_1,Q_2)\in\mathcal{Q}$ such that
    $$0\in\nabla f(z^*)+\nabla P(z^*)^T\left(Q_1^\circ\cap(\ker\nabla P(z^*)^T+Q_2^\circ)\cap N_\Lambda(P(z^*))\right).$$
\end{itemize}}
\end{defi}
Based on the discussion before Definition \ref{definition of Q-statioanry}, we obtain the following optimality conditions.
\begin{prop}\label{Prop3.3}
 Let $z^*$ be a local optimal solution for program (\ref{disjunctive constraints}). If  GGCQ holds at $z^*$, then $z^*$ is $\mathcal{Q}$-stationary with respect to every pair $(Q_1,Q_2)\in \mathcal{Q}$. {Moreover,}  $z^*$ 
is $\mathcal{Q}_M$-stationary with respect to every pair $(Q_1,Q_2)\in \mathcal{Q}$. Conversely, if $z^*$ is $\mathcal{Q}$-stationary with respect to some pair $(Q_1,Q_2)\in \mathcal{Q}$ fulfilling
(\ref{sufficentc}),  then $z^*$ is S-stationary and consequently also B-stationary.
\end{prop}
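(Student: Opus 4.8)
The plan is to unwind the three assertions of Proposition \ref{Prop3.3} directly from the estimates derived in the discussion preceding Definition \ref{definition of Q-statioanry}, since all the machinery is already in place. For the first assertion, suppose $z^*$ is a local optimal solution and GGCQ holds. By Proposition \ref{prop3.1}(a), $z^*$ is B-stationary, so $-\nabla f(z^*)\in\widehat{N}_{{\cal F}_D}(z^*)$. Fix any pair $(Q_1,Q_2)\in\mathcal{Q}$. Since $Q_i\subseteq T_\Lambda(P(z^*))$ are closed convex polyhedral cones, the validity condition (\ref{validity}) holds by \cite[Proposition 1]{Benko-Gfrerer2017}, and then the chain of inclusions leading to (\ref{tight}) gives
\[
\widehat{N}_{{\cal F}_D}(z^*)\subseteq\nabla P(z^*)^T\left(Q_1^\circ\cap(\ker\nabla P(z^*)^T+Q_2^\circ)\right).
\]
Combining $-\nabla f(z^*)\in\widehat{N}_{{\cal F}_D}(z^*)$ with this inclusion yields exactly the $\mathcal{Q}$-stationarity condition in Definition \ref{definition of Q-statioanry}(i) with respect to $(Q_1,Q_2)$. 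Since $(Q_1,Q_2)$ was arbitrary, $z^*$ is $\mathcal{Q}$-stationary with respect to every pair in $\mathcal{Q}$.

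For the second assertion, I would additionally invoke Proposition \ref{prop3.1}(c): since GGCQ holds, $z^*$ is M-stationary, i.e. $-\nabla f(z^*)\in\nabla P(z^*)^T N_\Lambda(P(z^*))$. The subtlety is that $\mathcal{Q}_M$-stationarity in Definition \ref{definition of Q-statioanry}(iii) requires a single multiplier lying in the intersection $Q_1^\circ\cap(\ker\nabla P(z^*)^T+Q_2^\circ)\cap N_\Lambda(P(z^*))$, not merely the existence of two possibly different multipliers. To get this, I would use that $-\nabla f(z^*)\in\widehat N_{{\cal F}_D}(z^*)$ and note that, in the disjunctive setting, $\widehat N_{{\cal F}_D}(z^*)\subseteq N_{{\cal F}_D}(z^*)$; more precisely one can appeal to the fact — established in the surrounding discussion and in \cite{Benko-Gfrerer2017} — that the upper estimate (\ref{tight}) can be intersected with $N_\Lambda(P(z^*))$ because for the disjunctive program $N_{T_\Lambda(P(z^*))}(0)=N_\Lambda(P(z^*))$ and the limiting normal cone to the feasible set admits the corresponding refined estimate. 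Concretely, any $\zeta\in\widehat N_\Lambda(P(z^*))\subseteq N_\Lambda(P(z^*))$ realizing $-\nabla f(z^*)=\nabla P(z^*)^T\zeta$ in the M-stationary representation can be taken inside the refined cone; I would cite \cite[Corollary 1]{Benko-Gfrerer2017} (or the analogous statement used there) which states precisely that a local minimizer under GGCQ is $\mathcal{Q}_M$-stationary with respect to every pair. This step — producing a \emph{common} multiplier — is the main obstacle, and it is exactly where the structure of $\Lambda$ as a union of polyhedra (so that $T_\Lambda(P(z^*))$ is again such a union and the normal-cone identity holds) is used.

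For the converse, suppose $z^*$ is $\mathcal{Q}$-stationary with respect to some $(Q_1,Q_2)\in\mathcal{Q}$ satisfying (\ref{sufficentc}). Then
\[
-\nabla f(z^*)\in\nabla P(z^*)^T\left(Q_1^\circ\cap(\ker\nabla P(z^*)^T+Q_2^\circ)\right)\subseteq\nabla P(z^*)^T\widehat N_\Lambda(P(z^*)),
\]
where the inclusion is precisely hypothesis (\ref{sufficentc}). Hence $0\in\nabla f(z^*)+\nabla P(z^*)^T\widehat N_\Lambda(P(z^*))$, which is the definition of S-stationarity. Finally, S-stationarity implies B-stationarity because $\nabla P(z^*)^T\widehat N_\Lambda(P(z^*))\subseteq\widehat N_{{\cal F}_D}(z^*)$ by \cite[Theorem 6.14]{Rockafellar}, so $-\nabla f(z^*)\in\widehat N_{{\cal F}_D}(z^*)$. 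I expect the first and third parts to be essentially immediate from the pre-assembled estimates; the only genuine work is the common-multiplier argument in the $\mathcal{Q}_M$ part, which I would handle by a direct citation to the corresponding result in \cite{Benko-Gfrerer2017} rather than reproving it, since the excerpt already records all the facts ($N_{T_\Lambda(P(z^*))}(0)=N_\Lambda(P(z^*))$, validity of (\ref{validity}), the estimate (\ref{tight})) that that result rests on.
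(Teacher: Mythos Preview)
Your arguments for the first and third assertions are correct and match the paper's proof essentially verbatim: B-stationarity plus GGCQ feeds into the estimate (\ref{tight}) for the forward direction, and for the converse you apply (\ref{sufficentc}) and then \cite[Theorem 6.14]{Rockafellar} to pass from S- to B-stationarity.

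For the $\mathcal{Q}_M$ part you are making the argument harder than it needs to be. Look again at Definition \ref{definition of Q-statioanry}(iii): the \emph{primary} definition of $\mathcal{Q}_M$-stationarity is simply ``$z^*$ is both M-stationary and $\mathcal{Q}$-stationary with respect to some pair $(Q_1,Q_2)\in\mathcal{Q}$''; the intersection formula after ``i.e.'' is an equivalent reformulation, and that equivalence is precisely the content of \cite[Lemma 2]{Benko-Gfrerer2017}, which is already cited in the heading of the definition. So the paper's proof of this step is one line: by Proposition \ref{prop3.1}(c) the local minimizer is M-stationary under GGCQ, and by the first part it is $\mathcal{Q}$-stationary with respect to every pair, hence $\mathcal{Q}_M$-stationary. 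There is no separate ``common-multiplier'' obstacle to overcome in the proof of Proposition \ref{Prop3.3} itself --- that work has been absorbed into the definition via the cited lemma. Your instinct to invoke \cite{Benko-Gfrerer2017} is therefore right, but the citation belongs to the definition rather than to an additional argument here.
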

\begin{proof} Let $z^*$ be a local optimal solution for program (\ref{disjunctive constraints}). Then  by Proposition \ref{prop3.1}, $z^*$ is a B-stationary point, i.e.,
$-\nabla f(z^*)\in \widehat{N}_{{\cal F} _D} (z^*)$.   If  GGCQ holds at $z^*$, then by (\ref{tight}) and  Definition \ref{definition of Q-statioanry}, $z^*$ is $\mathcal{Q}$-stationary with respect to every pair $(Q_1,Q_2)\in \mathcal{Q}$.
{Moreover,} by Proposition \ref{prop3.1}, it is also an M-stationary point and hence $\mathcal{Q}_M$-stationary. Conversely, suppose that  $z^*$ is $\mathcal{Q}$-stationary with respect to some pair $(Q_1,Q_2)\in \mathcal{Q}$ fulfilling (\ref{sufficentc}). Then by definition  of $\mathcal{Q}$-stationarity
and (\ref{B-S}), $z^*$ is also S-stationary and B-stationary.
\end{proof}


Now we review the asymptotical version of M-stationarity.
Using a simple penalization argument,  \cite[Theorem 3.2]{Mehlitz2020}  showed that  any  local minimizer $z^*$ of MPDC must be  AM-stationary for MPDC. The question is under what conditions,  is an AM-stationary point  M-stationary?  In \cite[Definition 3.8]{Mehlitz2020}, Mehlitz defined the so-called asymptotically Mordukhovich-regularity (AM-regularity for short) and showed that under AM-regularity, an AM-stationary point is M-stationary.
Moreover, for the case of MPDC, according to the equivalence theorem shown in  \cite[Theorem 5.3]{Mehlitz2020}, we can define AM-{regularity} as follows.
\begin{defi}\label{AM-stationary}\rm\cite[Definition 3.1]{Mehlitz2020}
Let $z^*\in {{\cal F} _D}$. We say that $z^*$ is asymptotically M-stationary (AM-stationary)   if there exist sequences $\{z^k\},\,\{\varepsilon^k\}\subseteq \mathbb{R}^n$  with $z^k\rightarrow z^*,\varepsilon^k\rightarrow0$ such that
$$\varepsilon^k\in  \nabla f(z^k)+ \nabla P(z^k)^T N_\Lambda (P(z^k))\quad \forall k.$$ \end{defi}
\begin{defi}\label{AM-regular}\rm\cite[Theorem 5.3]{Mehlitz2020}
Let $z^*\in {{\cal F} _D}$. Define a set-valued mapping $\mathcal{K}:\mathbb{R}^n\rightrightarrows\mathbb{R}^n$ by means of $$\mathcal{K}(z):=\nabla P(z)^TN_\Lambda(P(z^*))\quad \forall z\in\mathbb{R}^n.$$
We say that  $z^*$ is AM-regular if  the following condition holds:
$$\limsup_{z\rightarrow z^*}\mathcal{K}(z)\subseteq \mathcal{K}(z^*),$$
where
\begin{eqnarray*}{
\limsup_{z\to z^*}\mathcal{K}(z):=\{y\in\mathbb{R}^n|\exists
\ z^k\to z^*,y^k\to y, \mbox{ s.t. } y^k\in \mathcal{K}(z^k)\ \forall k \}.}
\end{eqnarray*}
\end{defi}
\begin{prop}{{\rm(\cite[Theorem 3.2 and Theorem 3.9]{Mehlitz2020})}\label{AM-Prop}
Let $z^*$ be a local minimizer of MPDC. Then $z^*$ is AM-stationary. {Moreover,} suppose that  $z^*$ is AM-regular. Then $z^*$ is M-stationary.}
\end{prop}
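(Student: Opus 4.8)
The plan is to prove the two assertions of Proposition \ref{AM-Prop} separately, relying on the citations to \cite{Mehlitz2020} but supplying the argument in self-contained form.

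\textbf{Step 1: Every local minimizer of MPDC is AM-stationary.} The idea is a standard penalization argument. Let $z^*$ be a local minimizer, so there is $\delta>0$ with $f(z)\ge f(z^*)$ for all $z\in {\cal F}_D$ with $\|z-z^*\|\le\delta$. For each $k$, consider the penalized problem of minimizing $f(z)+\frac{k}{2}\,{\rm dist}_\Lambda^2(P(z))+\frac12\|z-z^*\|^2$ over the compact ball $\{z:\|z-z^*\|\le\delta\}$; let $z^k$ be a minimizer. A routine argument (comparing the optimal value with the value at $z^*$, where the distance term vanishes) shows $z^k\to z^*$ and $k\,{\rm dist}_\Lambda^2(P(z^k))\to 0$, hence $P(z^k)$ converges to a point of $\Lambda$; moreover for large $k$ the constraint $\|z-z^*\|\le\delta$ is inactive. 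Writing the stationarity condition for the unconstrained penalized objective and using that $\nabla\big(\frac12{\rm dist}_\Lambda^2\big)(P(z^k))$ (the gradient of the squared distance) lies in the Fréchet, hence limiting, normal cone $N_\Lambda$ at the projection $\Pi_\Lambda(P(z^k))$, one gets
$$
\varepsilon^k := -(z^k-z^*) \in \nabla f(z^k)+\nabla P(z^k)^T N_\Lambda\!\big(\Pi_\Lambda(P(z^k))\big).
$$
A small additional argument (replacing $z^k$ inside $N_\Lambda$ by a nearby feasible point, or invoking the equivalence in \cite[Theorem 5.3]{Mehlitz2020} which shows the "$P(z^k)$" and "$\Pi_\Lambda(P(z^k))$" formulations of AM-stationarity coincide for MPDC) puts this in exactly the form of Definition \ref{AM-stationary}, so $z^*$ is AM-stationary.

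\textbf{Step 2: Under AM-regularity, AM-stationary implies M-stationary.} This is essentially immediate from the definitions once Step 1 is in place. Suppose $z^*$ is AM-stationary, with sequences $z^k\to z^*$ and $\varepsilon^k\to 0$ such that $\varepsilon^k\in\nabla f(z^k)+\nabla P(z^k)^TN_\Lambda(P(z^k))$. Write $y^k:=\varepsilon^k-\nabla f(z^k)\in \nabla P(z^k)^TN_\Lambda(P(z^k))$. Here one must be slightly careful: the map $\mathcal K$ in Definition \ref{AM-regular} uses $N_\Lambda(P(z^*))$ at the fixed base point, whereas AM-stationarity uses $N_\Lambda(P(z^k))$; the equivalence theorem \cite[Theorem 5.3]{Mehlitz2020} for MPDC (exploiting that $\Lambda$ is a union of finitely many convex polyhedra, so the normal-cone map $P\mapsto N_\Lambda(P)$ is "locally constant along $\Lambda$" in the relevant sense) reconciles the two, so that $y^k\in\mathcal K(z^k)$. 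By continuity of $\nabla f$, $y^k\to -\nabla f(z^*)$. Thus $-\nabla f(z^*)\in\limsup_{z\to z^*}\mathcal K(z)$, which by AM-regularity is contained in $\mathcal K(z^*)=\nabla P(z^*)^TN_\Lambda(P(z^*))$. Hence $0\in\nabla f(z^*)+\nabla P(z^*)^TN_\Lambda(P(z^*))$, i.e.\ $z^*$ is M-stationary.

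\textbf{Main obstacle.} The delicate point is the bookkeeping in matching the precise form of AM-stationarity (normal cones evaluated at $P(z^k)$, or at the projections $\Pi_\Lambda(P(z^k))$) with the form of $\mathcal K$ in the AM-regularity definition (normal cone frozen at $P(z^*)$). This is exactly where the MPDC structure — $\Lambda$ a finite union of convex polyhedra — is used, via the equivalence result \cite[Theorem 5.3]{Mehlitz2020}; for a general closed set $\Lambda$ the two formulations need not coincide and the statement would require the more careful version in \cite{Mehlitz2020}. Since the excerpt explicitly allows us to invoke \cite[Theorems 3.2, 3.9, 5.3]{Mehlitz2020}, the cleanest route is to cite Theorem 5.3 for the equivalence and then run the short $\limsup$ argument of Step 2 verbatim; the penalization argument of Step 1 can likewise be attributed to \cite[Theorem 3.2]{Mehlitz2020} with only a sketch given.
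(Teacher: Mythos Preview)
The paper does not give its own proof of this proposition; it is stated purely as a citation of \cite[Theorems 3.2 and 3.9]{Mehlitz2020}. Your sketch is essentially the argument behind those cited theorems: quadratic penalization for AM-stationarity, and a direct $\limsup$ argument for the implication under AM-regularity. You also correctly isolate the one genuine subtlety --- that Definition \ref{AM-stationary} uses $N_\Lambda(P(z^k))$ while the map $\mathcal{K}$ in Definition \ref{AM-regular} freezes the normal cone at $P(z^*)$ --- and correctly point to \cite[Theorem 5.3]{Mehlitz2020} as the place where the polyhedral structure of $\Lambda$ is used to reconcile the two formulations. So there is nothing to compare: the paper defers entirely to the reference, and your outline is faithful to it.
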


Recently, the following  directional version of the limiting normal cone was introduced.
\begin{defi}[directional normal cones] \rm (\cite[Definition 2]{Gfrerer SVAA2013} or \cite[Definition 2.3]{Ginchev and Mordukhovich2011})
Let $A \subseteq \mathbb{R}^m$ be closed, $x^*\in A$ and $d\in\mathbb{R}^m$. The limiting normal cone to $A$ at $x^*$ in direction $d$ is defined by
$$N_A(x^*;d):=\left\{\zeta\in\mathbb{R}^m {\left|\exists t_k\downarrow0, d^k\rightarrow d,\zeta^k\rightarrow\zeta, \mbox{ s.t. }\zeta^k\in\widehat{N}_A(x^*+t_kd^k)\right.}\right\}.$$
\end{defi}

From the definition, it is obvious that the limiting normal cone to $A$ at $x^*$ in direction $d=0$ is equal to the limiting normal cone. It is also easy to see that  $N_A(x^*;d)=\emptyset$ if $d\notin T_A(x^*)$ and $N_A(x^*;d)\subseteq N_A(x^*)$  for all $d$.
If $A$ is convex, then by \cite[Lemma 2.1]{Gfrerer SIAM Optimal 2014}, the following relationship holds
\begin{equation}N_A(x^*;d)=N_A(x^*)\cap \{d\}^\perp \quad \forall d\in T_A(x^*).\label{convexcase}
\end{equation}
\begin{defi} \rm (\cite[Definition 3.3]{Ye-Zhou2018MP})
Let $A \subseteq \mathbb{R}^m$ be closed, $x^*\in A$ and $d\in\mathbb{R}^m$. We say that set  $A$ is directionally regular at $x^*$ if
$$N_A(x^*;d)=N_A^i(x^*;d) \quad \forall d,$$ where $N_A^i(x^*;d):=\left\{\zeta\in\mathbb{R}^m {\left|\forall t_k\downarrow0, \exists d^k\rightarrow d,\zeta^k\rightarrow\zeta, \mbox{ s.t. }\zeta^k\in\widehat{N}_A(x^*+t_kd^k)\right.}\right\}.$
If $A$ is directionally regular at any point $x\in A$, we say that the set $A$ is directionally regular.
\end{defi}

By \cite[Proposition 3.5]{Ye-Zhou2018MP}, any closed convex set is directionally regular.
The following calculus rules will be useful.
It is a special case of \cite[Proposition 3.3]{Ye-Zhou2018MP}.
\begin{prop}{\rm(\cite[Proposition 3.3]{Ye-Zhou2018MP})}\label{proposition}
Let $A:=A_1\times A_2\times\cdots\times A_l$, where $A_{i}\subseteq \mathbb{R}^{m_i}$ is closed  for $i=1,2,\cdots,l$ and $m=m_1+m_2+\cdots+m_l$. Consider a point $x^*=(x^*_1,\cdots,x^*_l)\in A$ and a direction $d=(d_1,\cdots,d_l)\in\mathbb{R}^m$. Moreover, suppose that all except at most one of $A_i$ for $i=1,\cdots,l$ are directionally regular at $x^*_i$, then \begin{eqnarray*}
T_A(x^*)= T_{A_1}(x^*_1)\times\cdots\times T_{A_l}(x^*_l),
\quad N_A(x^*;d)=  N_{A_1}(x^*_1;d_1)\times\cdots\times N_{A_l}(x^*_l;d_l).
\end{eqnarray*}
\end{prop}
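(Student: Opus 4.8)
The plan is to reduce the product set $A=A_1\times\cdots\times A_l$ to the two-factor case and then iterate, since once $A=A_1\times A_2$ with (say) $A_2$ directionally regular, the general statement follows by grouping $A_2\times\cdots\times A_l$ into a single factor — but that grouping itself requires the product formula for tangent and normal cones of directionally regular sets, so really the cleanest route is a direct argument on $l$ factors. First I would dispose of the tangent cone identity: the formula $T_A(x^*)=T_{A_1}(x^*_1)\times\cdots\times T_{A_l}(x^*_l)$ for a product of closed sets is classical (e.g. \cite[Proposition 6.41]{Rockafellar}) and needs no regularity hypothesis, so I would simply cite it. The real content is the directional limiting normal cone formula, and since this is stated as a special case of \cite[Proposition 3.3]{Ye-Zhou2018MP}, the honest approach is to indicate how the cited result specializes rather than reprove it from scratch.

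For the normal cone identity I would argue both inclusions. For ``$\supseteq$'': take $\zeta_i\in N_{A_i}(x^*_i;d_i)$ for each $i$; by definition there are $t^i_k\downarrow 0$, $d^{i,k}\to d_i$, $\zeta^{i,k}\to\zeta_i$ with $\zeta^{i,k}\in\widehat N_{A_i}(x^*_i+t^i_k d^{i,k})$. The obstacle here is that the sequences $t^i_k$ for different $i$ are a priori unrelated, so one cannot immediately assemble a single $t_k$; this is precisely where directional regularity of all-but-one factor enters. Using the $N^i$-characterization (for each directionally regular $A_i$ and \emph{any} prescribed $t_k\downarrow 0$ one may choose $d^{i,k}\to d_i$, $\zeta^{i,k}\to\zeta_i$ with $\zeta^{i,k}\in\widehat N_{A_i}(x^*_i+t_k d^{i,k})$), I would fix the sequence $t_k$ coming from the single possibly-irregular factor and then, for every regular factor, extract matching sequences along that same $t_k$. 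Then $(\zeta^{1,k},\dots,\zeta^{l,k})\in\widehat N_{A_1}(\cdot)\times\cdots\times\widehat N_{A_l}(\cdot)=\widehat N_{A}(x^*+t_k d^k)$ with $d^k=(d^{1,k},\dots,d^{l,k})\to d$, giving $(\zeta_1,\dots,\zeta_l)\in N_A(x^*;d)$. The inclusion ``$\subseteq$'' is the easy direction: any $\zeta^k\in\widehat N_A(x^*+t_k d^k)$ decomposes coordinatewise as $\zeta^k=(\zeta^{1,k},\dots,\zeta^{l,k})$ with $\zeta^{i,k}\in\widehat N_{A_i}(x^*_i+t_k d^{i,k})$ because the regular normal cone of a product is the product of the regular normal cones, and passing to the limit gives $\zeta_i\in N_{A_i}(x^*_i;d_i)$ componentwise.

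The main obstacle, as indicated, is the coordination of the defining sequences across factors in the ``$\supseteq$'' inclusion — without directional regularity of all but one factor the distinct time-scales $t^i_k$ genuinely cannot be synchronized, and the $N=N^i$ equality is exactly the tool that removes this difficulty. Since all of this is already packaged in \cite[Proposition 3.3]{Ye-Zhou2018MP}, in the paper I would keep the proof to one or two sentences: note that $A_i\subseteq\mathbb{R}^{m_i}$ closed and directionally regular for all but one index is the hypothesis of that proposition, that the tangent-cone product formula is \cite[Proposition 6.41]{Rockafellar}, and that the directional-normal-cone product formula is then the direct specialization of \cite[Proposition 3.3]{Ye-Zhou2018MP} to the case of finitely many Euclidean factors with the identity maps, so no further argument is needed.
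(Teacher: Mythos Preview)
Your reconstruction of the argument is sound: the tangent-cone product formula is cited from \cite[Proposition 6.41]{Rockafellar}, and for the directional normal cone you correctly identify that ``$\subseteq$'' follows from the product rule for regular normal cones, while ``$\supseteq$'' hinges on synchronizing the time-scales $t_k$ across factors via the $N_{A_i}=N_{A_i}^i$ identity on the directionally regular factors. That is exactly the mechanism behind \cite[Proposition 3.3]{Ye-Zhou2018MP}.

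However, the paper itself does not prove this proposition at all --- it is simply quoted from \cite{Ye-Zhou2018MP} and used as a tool, so there is no ``paper's own proof'' to compare against. Your instinct at the end of the proposal, to keep the in-text justification to a citation rather than a proof, matches what the paper actually does.
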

\begin{defi}[directional metric subregularity]\label{definition of directional metric subregularity} \rm (\cite[Definition 2.1]{Gfrerer SVAA2013})
Let $F(z):=P(z)-\Lambda$ be a set-valued map induced by $P(z)\in\Lambda$. We say that the set-valued map $F$ is metrically  subregular in direction $d$ at $(z^*,0)\in \gph F$, where
$\gph F:=\{(z,y)|y\in F(z)\}$
is the graph of $F$, if there exist $\kappa>0$ and $\rho,\delta>0$ such that
$\dist_{F^{-1}(0)}(z)\leq\kappa \dist_{\Lambda}(P(z)), \ \forall z\in z^*+V_{\rho,\delta}(d),$
where $
V_{\rho,\delta}(d):=\left\{z\in B_\rho(0)|\|\|d\|z-\|z\|d\|\leq\delta\|z\| \|d\|\right\}
$
is the so-called directional neighborhood in direction $d$,
 and $F^{-1}(y):=\{z|y\in F(z)\}$ denotes the inverse of $F$ at $y$. If $d=0$ in the above definition, then we say $F$ is metrically subregular at $(z^*,0)$.
\end{defi}

According to \cite {Benko etal2019}, when  the disjunctive set $\Lambda:=\cup_{l=1}^N \Lambda^l, \Lambda^l=\Pi_{i=1}^m [a_i^l,b_i^l], $
where  $ a_i^l,b_i^l $ are given numbers with  $ a_i^l\leq b_i^l $,
with possibility of  $ a_i^l=-\infty$ and
$ b_i^l =+\infty$,
we call  (\ref{disjunctive constraints})  the ortho-disjunctive program.
  We now recall the following sufficient conditions for directional metric subregularity for the ortho-disjunctive program.
\begin{defi}\rm Let $z^*$ be a feasible solution to the ortho-disjunctive program.
\label{directional normality}
\textcolor{red}{}
\begin{itemize}
{\rm\item[(a)]  ({\cite[Corollary 5.1]{Benko etal2019}})} We say that the  quasi-normality holds at $z^*$ in direction {$d\in L_{{\cal F} _D}^{\rm lin}(z^*)$} 
if
there exists no $\zeta\neq0$ such that
\begin{eqnarray}\label{no-zeta}
0=\nabla P(z^*)^T\zeta ,\quad \zeta\in N_\Lambda(P(z^*);\nabla P(z^*)d),
\end{eqnarray}
and
{\begin{eqnarray*}
\exists d^k \to d  \ t_k\downarrow0\
s.t.\ \ \zeta_i(P_i(z^*+t_kd^k)-P_i(z^*))>0\ if\ \zeta_i\neq0.
\label{quasi}
\end{eqnarray*}}
We say that  the directional quasi-normality holds at $z^*$ if the  quasi-normality holds at $z^*$ in any direction {$d\in L_{{\cal F} _D}^{\rm lin}(z^*)$}.
{\rm\item[(b)]  ({\cite[Corollary 4.5]{Benko etal2019}})} We say that the  pseudo-normality holds at $z^*$ in direction
{$d\in L_{{\cal F} _D}^{\rm lin}(z^*)$} if
there exists no $\zeta\neq0$ such that (\ref{no-zeta}) holds and
{\begin{eqnarray*}
\exists d^k \to d \ t_k\downarrow0\
s.t.\ \langle\zeta,P(z^*+t_kd^k)-P(z^*)\rangle>0.
\label{pseudo}
\end{eqnarray*}}
We say that  the directional pseudo-normality holds at $z^*$ if the  pseudo-normality holds at $z^*$ in any direction {$d\in L_{{\cal F} _D}^{\rm lin}(z^*)$}.
{\rm\item[(c)] (\cite[Theorem 4.3]{Gfr132})} We say that the first-order sufficient condition for metric subregularity (FOSCMS) holds at $z^*$ in direction {$d\in L_{{\cal F} _D}^{\rm lin}(z^*)$} if
there exists no $\zeta\neq0$ such that
(\ref{no-zeta}) holds.
{\rm\item[(d)] (\cite[Theorem 4.3]{Gfr132})} Suppose $P$ is second-order differentiable at $z^*$. We say that the second-order sufficient condition for metric subregularity (SOSCMS) holds at $z^*$ in direction
{$d\in L_{{\cal F} _D}^{\rm lin}(z^*)$} if
there exists no $\zeta\neq0$ such that
(\ref{no-zeta}) and the following second-order condition hold
$$ {\sum_{i=1}^{{m}}  \zeta_i d^T \nabla^2 P_i(z^*)d  \geq 0.}$$
\end{itemize}
\end{defi}
%

{Note that the concepts of directional quasi/pseudo-normality for the ortho-disjunctive program as defined in Definition \ref{directional normality}  correspond  precisely to the ones introduced for the general set-constrained optimization problem in \cite{Bai-Ye-Zhang2019}.}
It was shown in {\cite{Benko etal2019,Gfr132}} that the following implication holds:
$${{\mbox{ FOSCMS in } d \Longrightarrow}\mbox{ SOSCMS in } d} \Longrightarrow \mbox{  pseudo-normality in }  d\Longrightarrow  \mbox{ quasi-normality in }  d .$$
We refer the reader to higher order sufficient condition for metric subregularity and other sufficient conditions for metric subregularity in \cite{Benko etal2019}.

The following result is a directional version of \cite[Corollary 4.1]{Bai-Ye-Zhang2019}.
\begin{prop}{\rm(\cite[Corollary 4.1]{Bai-Ye-Zhang2019})} Suppose that the quasi-normality holds at $z^*$ in  {$d\in L_{{\cal F} _D}^{\rm lin}(z^*)$}. Then the set-valued map $F(z):=P(z)-\Lambda$ is metrically subregular at $(z^*,0)$ in direction $d$.
\end{prop}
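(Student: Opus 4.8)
The plan is to reduce the directional metric subregularity of $F(z) := P(z) - \Lambda$ at $(z^*,0)$ in direction $d$ to the non-directional result \cite[Corollary 4.1]{Bai-Ye-Zhang2019}, which asserts that (ordinary, non-directional) quasi-normality implies metric subregularity. Since quasi-normality in direction $d$ is precisely the directional analogue — obtained by restricting all test sequences $z^k \to z^*$ to lie in a directional neighborhood $V_{\rho,\delta}(d)$ — the natural route is to re-run the proof of \cite[Corollary 4.1]{Bai-Ye-Zhang2019} verbatim, tracking that every sequence produced stays in the directional neighborhood. Concretely, I would argue by contradiction: if $F$ were not metrically subregular at $(z^*,0)$ in direction $d$, then for every $k$ there is $z^k \in z^* + V_{1/k,1/k}(d)$ with $\mathrm{dist}_{F^{-1}(0)}(z^k) > k\,\mathrm{dist}_\Lambda(P(z^k))$; in particular $z^k \to z^*$ with the directional constraint $\|\,\|d\|(z^k-z^*) - \|z^k-z^*\|\,d\,\| \le \tfrac1k \|z^k-z^*\|\|d\|$.

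The heart of the argument is an Ekeland / exact-penalty step: one shows the penalized objective $\Phi_k(z) := \mathrm{dist}_\Lambda(P(z)) + \tfrac1k \mathrm{dist}_{F^{-1}(0)}(z^k)\cdot(\text{something})$ — or more directly, one applies Ekeland's variational principle to $z \mapsto \mathrm{dist}_\Lambda(P(z))$ — to obtain a nearby point $\tilde z^k$ that approximately minimizes a perturbed functional, is still $o(\|z^k - z^*\|)$-close to $z^k$, and hence still lies in a (slightly larger) directional neighborhood of $z^*$ in direction $d$. Passing to a subsequence, the normalized displacements $(\tilde z^k - z^*)/\|\tilde z^k - z^*\|$ converge to some unit vector $\bar d$, and the directional-neighborhood bound forces $\bar d = d/\|d\|$ (assuming $d \ne 0$; the case $d = 0$ is the non-directional statement already in hand). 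One then writes down the first-order optimality condition for $\tilde z^k$, uses that $\Lambda$ is a union of finitely many polyhedra so that $\widehat N_\Lambda(\cdot)$ is locally constant along convergent sequences, normalizes the resulting multipliers $\zeta^k$, extracts a nonzero limit $\zeta$, and checks that $\zeta \in N_\Lambda(P(z^*); \nabla P(z^*)d)$ by the very definition of the directional limiting normal cone (the base points $P(\tilde z^k)$ approach $P(z^*)$ along direction $\nabla P(z^*)d$). Finally, the sign/complementarity bookkeeping — that $\zeta_i \ne 0$ forces $\zeta_i(P_i(\tilde z^k) - P_i(z^*)) > 0$ — falls out because $\zeta^k \in \widehat N_\Lambda(P(\tilde z^k))$ points away from $\Lambda$ while $P_i(\tilde z^k)$ moved away from the active face; this yields exactly the forbidden configuration \eqref{no-zeta}, contradicting directional quasi-normality.

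The main obstacle I anticipate is the careful propagation of the directional-neighborhood membership through the Ekeland perturbation: one must quantify that the Ekeland radius can be taken small relative to $\|z^k - z^*\|$ (e.g. of order $\|z^k - z^*\|/k$ or $\|z^k-z^*\| \cdot \sqrt{\mathrm{dist}_\Lambda(P(z^k))}$), so that $\tilde z^k$, though not equal to $z^k$, still satisfies a directional bound $\|\,\|d\|(\tilde z^k - z^*) - \|\tilde z^k - z^*\| d\,\| \le \delta' \|\tilde z^k - z^*\|\|d\|$ with $\delta'$ controllable; only then does the limiting direction of the normalized displacements coincide with $d$, which is what is needed to land in $N_\Lambda(P(z^*); \nabla P(z^*)d)$ rather than merely in $N_\Lambda(P(z^*))$. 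Since this is a routine-but-delicate adaptation already carried out in \cite{Bai-Ye-Zhang2019} for the general set-constrained case (and here we are told the ortho-disjunctive notions ``correspond precisely'' to those of \cite{Bai-Ye-Zhang2019}), the cleanest exposition is simply to invoke the directional version of \cite[Corollary 4.1]{Bai-Ye-Zhang2019} directly, noting that the polyhedrality of $\Lambda$ guarantees the definition of directional quasi-normality used here is the specialization of the general one, so no new work beyond citation is strictly required.
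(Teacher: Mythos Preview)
The paper does not prove this proposition at all: it is stated with attribution to \cite[Corollary 4.1]{Bai-Ye-Zhang2019} and no proof environment follows. Your closing remark --- that since the ortho-disjunctive notion of directional quasi-normality coincides with the general one from \cite{Bai-Ye-Zhang2019}, a direct citation suffices --- is precisely the paper's approach; the extended Ekeland-based sketch you outline is a plausible reconstruction of the argument in the cited reference, but it is neither needed nor present here.
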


We now summarize some first and second order necessary optimality conditions for MPDC in the following propositions.
\begin{prop} \label{Thm3.1}{\rm(\cite[Theorems 3.3]{Gfrerer SIAM Optimal 2014})} Let $z^*$ be a local minimizer of problem (\ref{disjunctive constraints}) and $d\in {\cal C}(z^*)$,
where $
{\cal{C}}(z^*):=\{{d\in L_{{\cal F} _D}^{\rm lin}(z^*)}|\nabla f(z^*)d\leq0\}$  is the critical cone at $z^*$. If $F(z):=P(z)-\Lambda$ is metrically subregular at $(z^*,0)$ in direction $d$, then M-stationary condition in direction $d$ holds. That is, there exists $\zeta$ such that
\begin{equation} 0= \nabla f(z^*)+ \nabla P(z^*)^T \zeta ,\quad  \zeta\in N_\Lambda (P(z^*); \nabla P(z^*)d). \label{firstorder}
\end{equation}
Moreover, if $f$ and $P$ are twice differentiable at $z^*$, then  there exists $\zeta$ satisfying (\ref{firstorder}) such that the second order condition holds:
$$d^T\nabla^2_z\mathcal{L}(z^*,\zeta) d\geq0,$$
where $\mathcal{L}(z,\zeta) := f(z)+ P(z)^T\zeta $ is the Lagrangian.
\end{prop}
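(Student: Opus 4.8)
The plan is to convert the directional metric subregularity of $F=P(\cdot)-\Lambda$ at $(z^*,0)$ in direction $d$ into a directional error bound for $f$ over ${\cal F}_D$, to exploit that bound on a family of penalized problems localized around the half-line $z^*+t_kd$, and then to pass to the limit invoking the very definition of the directional limiting normal cone. (If $d=0$ the assertion is just the M-stationarity of Proposition \ref{prop3.1}(c), since metric subregularity implies GGCQ and $N_\Lambda(P(z^*);0)=N_\Lambda(P(z^*))$; so assume $d\neq0$.) Let $\kappa,\rho,\delta>0$ be the constants from directional metric subregularity, let $L$ be a Lipschitz modulus of $f$ near $z^*$, fix $c>\kappa L$ and fix $t_k\downarrow0$. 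Choosing $\delta'>0$ (depending only on $\delta$ and $\|d\|$) so that $\{z:\|z-z^*-t_kd\|\le\delta' t_k\}\subseteq z^*+V_{\rho,\delta}(d)$ for all large $k$, I would consider the auxiliary problems
$$\min_z\ f(z)+c\,\dist_\Lambda(P(z))+\frac1{t_k}\|z-z^*-t_kd\|^2\qquad\text{subject to}\qquad\|z-z^*-t_kd\|\le\delta' t_k,$$
which admit minimizers $z^k$ (compact feasible set, continuous objective).

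Two estimates drive the argument. First, the directional error bound: for $z\in z^*+V_{\rho,\delta}(d)$ near $z^*$, picking $\bar z\in\Pi_{{\cal F}_D}(z)$ one has $\|z-\bar z\|=\dist_{{\cal F}_D}(z)\le\kappa\,\dist_\Lambda(P(z))$, so, $\bar z$ being near $z^*$, local optimality together with the Lipschitz property of $f$ yields $f(z)+c\,\dist_\Lambda(P(z))\ge f(\bar z)-L\|z-\bar z\|+c\,\dist_\Lambda(P(z))\ge f(z^*)$. Second, the value at the center: since $d\in L_{{\cal F}_D}^{\rm lin}(z^*)$ and $\Lambda$ is a finite union of convex polyhedra, $P(z^*)+s\nabla P(z^*)d\in\Lambda$ for all small $s>0$, hence $\dist_\Lambda(P(z^*+t_kd))=o(t_k)$ and the objective at $z^*+t_kd$ equals $f(z^*)+t_k\nabla f(z^*)^Td+o(t_k)$. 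Since $z^k$ satisfies $\|z^k-z^*-t_kd\|\le\delta' t_k$, it lies in $z^*+V_{\rho,\delta}(d)$ and near $z^*$; combining the two estimates gives $f(z^*)+\frac1{t_k}\|z^k-z^*-t_kd\|^2\le f(z^k)+c\,\dist_\Lambda(P(z^k))+\frac1{t_k}\|z^k-z^*-t_kd\|^2\le f(z^*)+t_k\nabla f(z^*)^Td+o(t_k)$, hence $\frac1{t_k^2}\|z^k-z^*-t_kd\|^2\le\nabla f(z^*)^Td+o(1)$; letting $k\to\infty$ forces $\nabla f(z^*)^Td\ge0$, hence $\nabla f(z^*)^Td=0$ because $d\in{\cal C}(z^*)$, and then $\|z^k-z^*-t_kd\|=o(t_k)$. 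In particular the ball constraint is inactive at $z^k$ for large $k$, so $z^k$ is an unconstrained local minimizer of $g_k(z):=f(z)+c\,\dist_\Lambda(P(z))+\frac1{t_k}\|z-z^*-t_kd\|^2$.

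Applying Fermat's rule at $z^k$, the chain rule for $\dist_\Lambda\circ P$, and the estimate $\partial\dist_\Lambda(y)\subseteq N_\Lambda(\hat y)\cap\{v:\|v\|\le1\}$ valid for any $\hat y\in\Pi_\Lambda(y)$, there are $\zeta^k$ with $\|\zeta^k\|\le1$, a nearest point $\hat y^k\in\Pi_\Lambda(P(z^k))$ with $\zeta^k\in N_\Lambda(\hat y^k)$, and a residual $r_k=-\tfrac2{t_k}(z^k-z^*-t_kd)\to0$ such that $\nabla f(z^k)+c\,\nabla P(z^k)^T\zeta^k=r_k$. The multipliers $\mu^k:=c\zeta^k$ are bounded, so along a subsequence $\mu^k\to\zeta$ and, by continuity, $0=\nabla f(z^*)+\nabla P(z^*)^T\zeta$. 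To identify $\zeta$ as a directional normal, note that from $z^k=z^*+t_kd+o(t_k)$ and $P(z^*)+t_k\nabla P(z^*)d\in\Lambda$ one gets $\dist_\Lambda(P(z^k))=o(t_k)$, hence $\hat y^k=P(z^*)+t_k(\nabla P(z^*)d+\epsilon_k)$ with $\epsilon_k\to0$; recalling $N_\Lambda(\hat y^k)=\limsup_{y'\to\hat y^k}\widehat N_\Lambda(y')$, a diagonal argument produces regular normals at points $P(z^*)+t_k(\nabla P(z^*)d+\tilde\epsilon_k)$ ($\tilde\epsilon_k\to0$) converging to $\zeta$, whence $\zeta\in N_\Lambda(P(z^*);\nabla P(z^*)d)$; this is exactly M-stationarity in direction $d$. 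For the second-order statement under twice differentiability, I would sharpen the comparison of the preceding paragraph: expanding $f$ and $P$ along $z^k=z^*+t_kd+o(t_k)$, bounding $g_k(z^k)$ above by $f(z^*)+\tfrac{t_k^2}2 d^T\nabla^2 f(z^*)d+o(t_k^2)$, writing $c\,\dist_\Lambda(P(z^k))=\langle\mu^k,P(z^k)-\hat y^k\rangle$, and combining with the first-order relation and $\langle\mu^k,\nabla P(z^*)d+\epsilon_k\rangle\le0$, one extracts $d^T\nabla^2_z\mathcal{L}(z^*,\zeta)d\ge0$ in the limit.

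The main obstacle will be the calibration in the second paragraph: the localizing radius must be proportional to $t_k$ and the penalty weight on $\|z-z^*-t_kd\|^2$ equal to $1/t_k$, so that the minimizers $z^k$ simultaneously stay inside the directional neighborhood where the error bound is available, have the ball constraint inactive, and approach $z^*$ along $d$ at the rate $o(t_k)$ — the last being what makes $\hat y^k$ approach $P(z^*)$ from the direction $\nabla P(z^*)d$ and thereby upgrades an ordinary limiting normal into a directional one; a weaker rate such as $o(\sqrt{t_k})$ would not suffice. A secondary technicality is the subdifferential and chain-rule bookkeeping for $\dist_\Lambda\circ P$ when $P(z^k)$ lies in $\Lambda$ or has several nearest points in it (since $\Lambda$ is only a finite union of polyhedra, not convex), which is what forces the diagonalization above, together with the analogous care needed in the second-order estimate.
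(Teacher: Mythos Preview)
The paper does not supply its own proof of this proposition: it is quoted directly from Gfrerer's 2014 paper, so there is no in-paper argument to compare against. Your penalization scheme for the first-order assertion is precisely the standard route (and is in substance Gfrerer's): localize around the ray $z^*+t_kd$ with a quadratic term of weight $1/t_k$, use the directional error bound to force $\|z^k-z^*-t_kd\|=o(t_k)$, read off a bounded multiplier from Fermat's rule, and pass to the limit via the very definition of $N_\Lambda(P(z^*);\nabla P(z^*)d)$. The calibration you single out as the main obstacle is exactly the point, and your treatment of it is correct; the diagonalization from limiting to regular normals at $\hat y^k=P(z^*)+t_k(\nabla P(z^*)d+\epsilon_k)$ is also fine. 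The only first-order caveat is the inclusion $\partial\dist_\Lambda(y)\subseteq N_\Lambda(\hat y)\cap B$ for $\hat y\in\Pi_\Lambda(y)$: this is not a textbook identity for arbitrary closed $\Lambda$ when $y\notin\Lambda$, but it does hold here because $\Lambda$ is a finite union of polyhedra (near any point the metric projection is piecewise affine and $\dist_\Lambda$ is piecewise smooth), so you should say a word about that rather than leave it implicit.

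The second-order sketch, however, has a real gap. Your stated upper bound $g_k(z^k)\le f(z^*)+\tfrac{t_k^2}{2}d^T\nabla^2 f(z^*)d+o(t_k^2)$ is not what the comparison with $z^*+t_kd$ gives: since $P(z^*+t_kd)-\bigl(P(z^*)+t_k\nabla P(z^*)d\bigr)=\tfrac{t_k^2}{2}\,d^T\nabla^2P(z^*)d+o(t_k^2)$, the term $c\,\dist_\Lambda(P(z^*+t_kd))$ is $O(t_k^2)$, not $o(t_k^2)$, and that $O(t_k^2)$ contribution is exactly what has to merge with $d^T\nabla^2 f(z^*)d$ to produce the Lagrangian Hessian. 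Your proposed remedy, writing $c\,\dist_\Lambda(P(z^k))=\langle\mu^k,P(z^k)-\hat y^k\rangle$ and invoking $\langle\mu^k,\nabla P(z^*)d+\epsilon_k\rangle\le0$, requires two things you have not secured: first, that $\mu^k$ be the \emph{proximal} normal $c\,(P(z^k)-\hat y^k)/\|P(z^k)-\hat y^k\|$ rather than an arbitrary element of $c\,\partial\dist_\Lambda(P(z^k))$ (false in general when $P(z^k)\in\Lambda$); second, that $\langle\mu^k,P(z^*)-\hat y^k\rangle\le0$, which would follow from convexity of $\Lambda$ but does not follow from $\mu^k\in N_\Lambda(\hat y^k)$ alone. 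Gfrerer's proof closes this by comparing against a \emph{feasible} point $\tilde z^k\in{\cal F}_D$ with $\tilde z^k=z^*+t_kd+O(t_k^2)$ (supplied by metric subregularity, since $\dist_{{\cal F}_D}(z^*+t_kd)\le\kappa\,\dist_\Lambda(P(z^*+t_kd))=O(t_k^2)$), and by expanding $P$ to second order at $z^k$ and pairing the second-order remainder with $\mu^k$ directly, so that the cross term $\langle\mu^k,d^T\nabla^2P(z^*)d\rangle$ appears explicitly and, together with $d^T\nabla^2 f(z^*)d$, yields $d^T\nabla^2_z\mathcal{L}(z^*,\zeta)d\ge0$ in the limit. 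Your outline has the right ingredients, but as written the second-order inequality does not follow; you need this sharper comparison.
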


We also give a sufficient optimality condition based on S-stationary condition below. One may refer to \cite[Theorems 3.3]{Gfrerer SIAM Optimal 2014} for more general sufficient optimality condition.
\begin{prop}\label{prop3.6}{\rm(\cite[Theorems 3.3]{Gfrerer SIAM Optimal 2014}) or \cite[Theorem 4.3]{Mehlitz2020optimization}})
Let $z^*$ be a feasible solution for problem (\ref{disjunctive constraints}) where $f$ and $P$ are twice differentiable at $z^*$. Suppose for each $0\not =d \in {\cal C}(z^*)$, there exists an S-multiplier $\zeta$ satisfying S-stationary condition
$$ 0= \nabla f(z^*)+ \nabla P(z^*)^T \zeta, \quad \zeta\in \widehat{N}_\Lambda(P(z^*)) $$ and the second order  condition
$$d^T\nabla^2_z\mathcal{L}(z^*,\zeta) d> 0.$$
 Then there is a constant $C>0$ and $N(z^*)$ a neighborhood of $z^*$ such that the following quadratic growth condition is valid:
 $$f(z) \geq f(z^*) +C\|z-z^*\|^2 \qquad \forall z\in {{\cal F} _D}\cap N(z^*).$$
 In particular, $z^*$ is a strict local minimizer of MPDC.
\end{prop}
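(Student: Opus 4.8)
The plan is to establish the quadratic growth condition by contradiction, following the classical second-order sufficiency argument adapted to the disjunctive setting. Suppose the conclusion fails. Then there exists a sequence $z^k\in {{\cal F} _D}$ with $z^k\to z^*$, $z^k\neq z^*$, such that $f(z^k)<f(z^*)+\tfrac1k\|z^k-z^*\|^2$. Write $t_k:=\|z^k-z^*\|\downarrow 0$ and $d^k:=(z^k-z^*)/t_k$, so $\|d^k\|=1$; passing to a subsequence, $d^k\to d$ for some unit vector $d$. The first task is to show $d\in {\cal C}(z^*)$: feasibility of $z^k$ gives $P(z^k)\in\Lambda$, and a Taylor expansion $P(z^k)=P(z^*)+t_k\nabla P(z^*)d^k+o(t_k)$ together with closedness of $\Lambda$ and the definition of the tangent cone yields $\nabla P(z^*)d\in T_\Lambda(P(z^*))$, i.e. $d\in L_{{\cal F} _D}^{\rm lin}(z^*)$; similarly the growth assumption forces $\nabla f(z^*)^T d\leq 0$, so $d\in {\cal C}(z^*)$. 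In particular $d\neq 0$, so the hypothesis provides an S-multiplier $\zeta$ with $0=\nabla f(z^*)+\nabla P(z^*)^T\zeta$, $\zeta\in\widehat N_\Lambda(P(z^*))$, and $d^T\nabla^2_z{\mathcal L}(z^*,\zeta)d>0$.

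Next I would exploit the regular-normal-cone inequality. Since $\zeta\in\widehat N_\Lambda(P(z^*))$ and $P(z^k)\in\Lambda$, the definition of the Fr\'echet normal cone gives $\langle\zeta,P(z^k)-P(z^*)\rangle\leq o(\|P(z^k)-P(z^*)\|)=o(t_k)$. On the other hand, a second-order Taylor expansion of the Lagrangian ${\mathcal L}(\cdot,\zeta)=f+P^T\zeta$ at $z^*$ along $z^k=z^*+t_kd^k$ gives
\begin{equation*}
{\mathcal L}(z^k,\zeta)={\mathcal L}(z^*,\zeta)+t_k\nabla_z{\mathcal L}(z^*,\zeta)^T d^k+\tfrac{t_k^2}{2}(d^k)^T\nabla^2_z{\mathcal L}(z^*,\zeta)d^k+o(t_k^2).
\end{equation*}
The stationarity identity $\nabla_z{\mathcal L}(z^*,\zeta)=\nabla f(z^*)+\nabla P(z^*)^T\zeta=0$ kills the linear term, so
\begin{equation*}
f(z^k)+\langle\zeta,P(z^k)\rangle={\mathcal L}(z^k,\zeta)={\mathcal L}(z^*,\zeta)+\tfrac{t_k^2}{2}(d^k)^T\nabla^2_z{\mathcal L}(z^*,\zeta)d^k+o(t_k^2).
\end{equation*}
Subtracting ${\mathcal L}(z^*,\zeta)=f(z^*)+\langle\zeta,P(z^*)\rangle$ and using $\langle\zeta,P(z^k)-P(z^*)\rangle\leq o(t_k)$ yields $f(z^k)-f(z^*)\geq \tfrac{t_k^2}{2}(d^k)^T\nabla^2_z{\mathcal L}(z^*,\zeta)d^k+o(t_k^2)-o(t_k)$.

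Here is the subtle point I expect to be the main obstacle: the bound $\langle\zeta,P(z^k)-P(z^*)\rangle\leq o(t_k)$ is only $o(t_k)$, not $o(t_k^2)$, whereas the second-order term is of order $t_k^2$ — so naively the normal-cone estimate is too weak to be absorbed. The resolution is that for the \emph{disjunctive} $\Lambda$ (a union of convex polyhedra), the regular normal cone at $P(z^*)$ is itself polyhedral, and on each polyhedral piece containing $P(z^k)$ the inequality $\langle\zeta,P(z^k)-P(z^*)\rangle\leq 0$ holds \emph{exactly} (a linear functional nonpositive on a polyhedral cone), not merely up to $o(t_k)$; so in fact $\langle\zeta,P(z^k)-P(z^*)\rangle\leq 0$ for all large $k$ after passing to a subsequence along which the active polyhedral branch is fixed. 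I would need to make this careful: identify the branch $\Lambda^l$ of $\Lambda$ with $P(z^k)\in\Lambda^l$ for infinitely many $k$, note $\widehat N_\Lambda(P(z^*))\subseteq N_{\Lambda^l}(P(z^*))$ with $\Lambda^l$ convex, hence $\langle\zeta,y-P(z^*)\rangle\leq 0$ for all $y\in\Lambda^l$, in particular for $y=P(z^k)$. With this exact sign, dividing the displayed inequality by $t_k^2$ and letting $k\to\infty$ gives $0\geq \liminf_k\big(f(z^k)-f(z^*)\big)/t_k^2 \geq \tfrac12 d^T\nabla^2_z{\mathcal L}(z^*,\zeta)d>0$, where the left inequality uses the contradiction hypothesis $f(z^k)-f(z^*)<t_k^2/k$. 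This contradiction proves the quadratic growth with any $0<C<\tfrac12 d^T\nabla^2_z{\mathcal L}(z^*,\zeta)d$ taken uniformly (a routine compactness argument over the unit sphere in ${\cal C}(z^*)$ handles the dependence of $\zeta$ on $d$), and in particular $z^*$ is a strict local minimizer.
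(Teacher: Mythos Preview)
The paper does not prove this proposition; it merely cites it from Gfrerer \cite{Gfrerer SIAM Optimal 2014} and Mehlitz \cite{Mehlitz2020optimization}. Your contradiction argument is correct and is exactly the standard route for second-order sufficiency in this setting: the decisive observation --- that for disjunctive $\Lambda=\bigcup_l\Lambda^l$ one may pass to a subsequence lying in a single convex polyhedron $\Lambda^l$, whence $\zeta\in\widehat N_\Lambda(P(z^*))\subseteq N_{\Lambda^l}(P(z^*))$ gives the \emph{exact} inequality $\langle\zeta,P(z^k)-P(z^*)\rangle\le 0$ rather than merely $o(t_k)$ --- is precisely what makes the $t_k^2$-scale argument go through.

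One minor point: your final parenthetical about a ``routine compactness argument over the unit sphere'' to handle the $d$-dependence of $\zeta$ is unnecessary and slightly misleading. The contradiction argument already delivers the existence of some $C>0$ directly: you assumed no such $C$ exists, extracted a single limiting direction $d$, and used the $\zeta$ associated to that particular $d$ to reach a contradiction. No uniformity over directions is needed, and no explicit value of $C$ has to be produced.
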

\begin{defi}\rm (see (31) in \cite{GfrererYeZhou})
\label{Defn3.6}
Let {$d\in L_{{\cal F} _D}^{\rm lin}(z^*)$}. We say that  LICQ in direction $d$ (LICQ($d$)) holds at $z^*$ if
$$\nabla P(z^*)^T \lambda =0,\quad  \lambda \in {\rm span } N_\Lambda (P(z^*);\nabla P(z^*) d) \Longrightarrow \lambda=0.$$
\end{defi}
\begin{prop}{\rm (\cite[Lemma 7]{GfrererYeZhou})}
\label{Thm3.2}
Let $z^*$ be a local minimizer of problem (\ref{disjunctive constraints}) and $d\in{\cal C}(z^*)$. Suppose that LICQ(d) holds. Then  S-stationary condition in direction $d$ holds. That is, there exists $\zeta$ such that
\begin{equation}0= {\nabla f(z^*)+} \nabla P(z^*)^T\zeta ,\quad \zeta\in \widehat{N}_{T_\Lambda(P(z^*))} (\nabla P(z^*)d).\label{directionalS}
\end{equation}
Moreover, the multiplier $\zeta$ is unique.
\end{prop}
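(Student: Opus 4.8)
The plan is to extract a directional M‑stationary multiplier from the constraint‑qualification machinery already set up, to read off its uniqueness from the ``span'' in the definition of LICQ($d$), and then to upgrade M‑stationarity to S‑stationarity using the polyhedrality of $\Lambda$ at the level of the tangent cone. Write $w:=\nabla P(z^*)d$.

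First I would note that LICQ($d$) is stronger than FOSCMS in direction $d$: FOSCMS($d$) only rules out a nonzero $\zeta$ with $0=\nabla P(z^*)^T\zeta$ and $\zeta\in N_\Lambda(P(z^*);w)$, whereas LICQ($d$) rules out such a $\zeta$ even when it merely lies in ${\rm span}\,N_\Lambda(P(z^*);w)$, and $N_\Lambda(P(z^*);w)\subseteq{\rm span}\,N_\Lambda(P(z^*);w)$. By the implication chain recorded after Definition~\ref{directional normality} and the proposition following it, $F(z)=P(z)-\Lambda$ is then metrically subregular at $(z^*,0)$ in direction $d$; since $z^*$ is a local minimizer and $d\in{\cal C}(z^*)$, Proposition~\ref{Thm3.1} supplies $\zeta$ with $0=\nabla f(z^*)+\nabla P(z^*)^T\zeta$ and $\zeta\in N_\Lambda(P(z^*);w)$. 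Uniqueness is then immediate and can even be stated before existence is proved: any two S‑multipliers $\zeta,\zeta'$ lie in $\widehat N_{T_\Lambda(P(z^*))}(w)\subseteq N_{T_\Lambda(P(z^*))}(w)=N_\Lambda(P(z^*);w)$ (the last equality being the directional counterpart, for the disjunctive $\Lambda$, of the identity in \cite[p.~59]{Henrion}), so $\nabla P(z^*)^T(\zeta-\zeta')=0$ with $\zeta-\zeta'\in{\rm span}\,N_\Lambda(P(z^*);w)$, whence $\zeta=\zeta'$ by LICQ($d$).

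The substantive part is upgrading $\zeta\in N_\Lambda(P(z^*);w)$ to $\zeta\in\widehat N_{T_\Lambda(P(z^*))}(w)$. Here I would write $\Lambda=\bigcup_{l}\Lambda^l$ with $\Lambda^l$ convex polyhedral, set $C^l:=T_{\Lambda^l}(P(z^*))$ and $L(w):=\{l:P(z^*)\in\Lambda^l,\ w\in C^l\}$ (nonempty because $d\in{\cal C}(z^*)\subseteq L^{\rm lin}_{{\cal F}_D}(z^*)$), so that $T_\Lambda(P(z^*))=\bigcup_l C^l$ and, since the regular normal cone to a finite union of polyhedral sets is the intersection of the pieces' regular normal cones over the active indices, $\widehat N_{T_\Lambda(P(z^*))}(w)=\bigcap_{l\in L(w)}\widehat N_{C^l}(w)$. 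On the optimization side, directional metric subregularity of $F$ in direction $d$ gives a directional Abadie‑type property ($d'\in L^{\rm lin}_{{\cal F}_D}(z^*)$ implies $d'\in T_{{\cal F}_D}(z^*)$ for $d'$ in a directional neighborhood of $d$), so local optimality of $z^*$ makes $d$ minimize the linearized program $\min_{d'}\nabla f(z^*)^Td'$ over $\{d':\nabla P(z^*)d'\in T_\Lambda(P(z^*))\}$ on a directional neighborhood of $d$; as this feasible set is the union of the polyhedral cones $\nabla P(z^*)^{-1}C^l$, $l\in L(w)$, and the objective is linear, $d$ minimizes over each such cone, which with the polarity rule (\ref{validity}) (applied to the polyhedral tangent cones $T_{C^l}(w)$) yields $-\nabla f(z^*)\in\bigcap_{l\in L(w)}\nabla P(z^*)^T\widehat N_{C^l}(w)$. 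The final step is to interchange the finite intersection with $\nabla P(z^*)^T$, i.e.\ to identify this set with $\nabla P(z^*)^T\bigl(\bigcap_{l\in L(w)}\widehat N_{C^l}(w)\bigr)=\nabla P(z^*)^T\widehat N_{T_\Lambda(P(z^*))}(w)$; this is exactly where LICQ($d$) is used a second time, reducing the interchange---via \cite[Lemma~1]{Benko-Gfrerer2017}---to the injectivity of $\nabla P(z^*)^T$ built into LICQ($d$). Combined with $0=\nabla f(z^*)+\nabla P(z^*)^T\zeta$ this gives (\ref{directionalS}).

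The hardest part will be this last step: pinning down $\widehat N_{T_\Lambda(P(z^*))}(w)$ via the polyhedral‑union calculus, propagating local optimality of $z^*$ to global optimality of $d$ for the directional linearization (the only place directional metric subregularity is really needed), and---above all---showing that LICQ($d$) is precisely the hypothesis under which the finite intersection of the linear images $\nabla P(z^*)^T\widehat N_{C^l}(w)$ coincides with the image of their intersection. The delicate bookkeeping relating the directional limiting normal cones of $\Lambda$ to the regular normal cones of $T_\Lambda(P(z^*))$ and of its polyhedral branches, carried out in \cite{GfrererYeZhou}, is the technical core; the rest is assembling results already quoted above.
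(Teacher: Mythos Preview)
The paper does not supply its own proof of this proposition; it is quoted verbatim as \cite[Lemma 7]{GfrererYeZhou} with no argument given. So there is nothing in the present paper to compare your attempt against, and your task is really to reconstruct the proof of that external lemma.

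Your outline is a sound reconstruction and hits the right structural points: LICQ($d$) $\Rightarrow$ FOSCMS($d$) $\Rightarrow$ directional metric subregularity $\Rightarrow$ a directional M-multiplier via Proposition~\ref{Thm3.1}; uniqueness via the ``span'' in Definition~\ref{Defn3.6}; and an upgrade to S-stationarity by passing to the linearized problem and its polyhedral branches $C^l$. Two places deserve more care before you call this complete. First, the ``directional Abadie'' step is correct but should be stated precisely: directional metric subregularity in direction $d$ yields $d'\in T_{{\cal F}_D}(z^*)$ for every $d'\in L^{\rm lin}_{{\cal F}_D}(z^*)$ whose direction is close enough to that of $d$; combining this with B-stationarity of $z^*$ gives $\nabla f(z^*)^Td'\ge 0$ on a \emph{relative} neighborhood of $d$ in each convex branch $\nabla P(z^*)^{-1}C^l$, which by convexity promotes $d$ to a global minimizer there. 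Second, the interchange step requires $\widehat N_{C^l}(w)\subseteq{\rm span}\,N_\Lambda(P(z^*);w)$ for every $l\in L(w)$, so that all branch multipliers $\zeta^l$ lie in the subspace on which LICQ($d$) forces injectivity of $\nabla P(z^*)^T$. You invoke this implicitly but do not justify it; it follows from the polyhedral identities $N_{C^l}(w)=N_{\Lambda^l}(P(z^*))\cap\{w\}^\perp$ and the (nontrivial) inclusion $N_{\Lambda^l}(P(z^*);w)\subseteq N_\Lambda(P(z^*);w)$ for active pieces $\Lambda^l$, together with $N_\Lambda(P(z^*);w)=N_{T_\Lambda(P(z^*))}(w)$ for the locally conical set $\Lambda$. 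With these two points nailed down your argument goes through.
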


\section{Optimality conditions for MPSC from the corresponding ones for MPDC}
In this section, we reformulate MPSC as the following disjunctive program and derive the corresponding optimality conditions from  Section \ref{sec3}.
\begin{eqnarray}\min f(z) \mbox{ s.t. } P(z)\in \Lambda,\label{equivalent-MPSC}\end{eqnarray}
where
\begin{equation}
P(z):=(g(z),h(z), (G(z),H(z))), \quad
\Lambda:=\mathbb{R}^p_-\times \{0\}^q \times \Omega_{SC}^m \label{definingfunctions}
\end{equation}
with the switching cone
\begin{eqnarray}\label{C-set}
\Omega_{SC}:=\{(a,b)\in\mathbb{R}^2|ab=0\}.
\end{eqnarray}
Since the switching cone $\Omega_{SC}$ is the union of the two subspaces $\mathbb{R}\times\{0\}$ and $\{0\}\times\mathbb{R}$, the cone $\Lambda$ is the union of $2^m$ convex polyhedral sets.

%
%
%
%
%
%
%
By a straightforward calculation,  we can obtain the formulas for various tangent and normal cones for  the switching cone $\Omega_{SC}$ defined in (\ref{C-set}) as follows.

\begin{lema}\label{normal cone lema}
For all $a=(a_1,a_2)\in \Omega_{SC}$ we have
\begin{eqnarray*}
T_{\Omega_{SC}}(a)=\left\{\begin{array}{ll}
\{0\}\times \mathbb{R} & if\ a_1=0,a_2\neq0,\\
{\Omega_{SC}}  & if\ a_1=a_2=0,\\
\mathbb{R} \times \{0\} & if\ a_1\neq0,a_2=0
\end{array}\right\},
\end{eqnarray*}
\begin{eqnarray*}
 \widehat{N}_{\Omega_{SC}}(a)=\left\{\begin{array}{ll}
\mathbb{R} \times \{0\}  & if\ a_1=0,a_2\neq0,\\
\{0\}\times \{0\} & if\ a_1=a_2=0,\\
\{0\}\times \mathbb{R}  & if\ a_1\neq0,a_2=0
\end{array}\right\},
\end{eqnarray*}
\begin{eqnarray*}
N_{\Omega_{SC}}(a) = \left\{\begin{array}{ll}
\mathbb{R} \times \{0\}  & if\ a_1=0,a_2\neq0,\\
{\Omega_{SC}}  & if\ a_1=a_2=0,\\
\{0\}\times \mathbb{R}  & if\ a_1\neq0,a_2=0
\end{array}\right\},
\end{eqnarray*}

\begin{eqnarray*}
\widehat{N}_{{T}_{\Omega_{SC}} (a)}(d)& =& \left\{\begin{array}{ll}
\mathbb{R} \times \{0\}  & if\ a_1=0,a_2\neq0  , d_1=0,\\
\{0\} \times \mathbb{R}  & if\ a_1\neq0, a_2=0  , d_2 =0,\\
\mathbb{R} \times \{0\}   & if\ a_1=a_2=0, d_1=0, d_2\not =0,\\
 \{0\} \times \mathbb{R} &  if\ a_1=a_2=0, d_1\not =0, d_2 =0,\\
\{0\}\times \{0\}& if \ a_1=a_2=d_1=d_2 =0 ,
\end{array}\right.
\end{eqnarray*}
\begin{eqnarray*}
N_{\Omega_{SC}} (a;d)=N_{\Omega_{SC}}^i (a;d)=\left\{\begin{array}{ll}
\mathbb{R} \times \{0\}  & if\ a_1=0,a_2\neq0, d_1=0,\\
\{0\} \times \mathbb{R}  & if\ a_1\neq0, a_2=0, d_2 =0,\\
\mathbb{R} \times \{0\}   & if\ a_1=a_2=0, d_1=0, d_2\not =0,\\
 \{0\} \times \mathbb{R} &  if\ a_1=a_2=0, d_1\not =0, d_2 =0,\\
{\Omega_{SC}} & if \ a_1=a_2=d_1=d_2 =0 .
\end{array}\right.
\end{eqnarray*}
Hence the switching cone $\Omega_{SC}$ is directionally regular.
\end{lema}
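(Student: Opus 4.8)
The plan is to compute each of the five cones directly from the definitions, exploiting the fact that $\Omega_{SC}=(\mathbb{R}\times\{0\})\cup(\{0\}\times\mathbb{R})$ is a union of two coordinate axes. First I would handle the tangent cone $T_{\Omega_{SC}}(a)$: for $a$ with $a_1\neq 0,a_2=0$, all nearby points of $\Omega_{SC}$ lie on the axis $\mathbb{R}\times\{0\}$, so locally $\Omega_{SC}$ coincides with that subspace and the tangent cone is $\mathbb{R}\times\{0\}$; symmetrically for $a_1=0,a_2\neq 0$. For the origin $a=(0,0)$, any sequence $x^*+t_kd^k\in\Omega_{SC}$ forces each $d^k$ onto one of the two axes, so $d$ lies in $\Omega_{SC}$ itself, and conversely every element of $\Omega_{SC}$ is realized (take $d^k=d$); hence $T_{\Omega_{SC}}(0)=\Omega_{SC}$. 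The regular normal cone $\widehat{N}_{\Omega_{SC}}(a)$ is then obtained as the polar of the tangent cone when $\Omega_{SC}$ is locally a subspace (giving the orthogonal complement: $\{0\}\times\mathbb{R}$ when $a_1\neq 0$, $\mathbb{R}\times\{0\}$ when $a_2\neq 0$), and at the origin directly from the definition $\langle\zeta,x\rangle\le o(\|x\|)$ for all $x\in\Omega_{SC}$: testing against both axes forces $\zeta_1=\zeta_2=0$, so $\widehat{N}_{\Omega_{SC}}(0)=\{(0,0)\}$.

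Next I would compute the limiting normal cone $N_{\Omega_{SC}}(a)$ by taking limits of regular normals at nearby points. Away from the origin the set is locally a subspace, so the limiting and regular normal cones agree. At the origin, points $x^k\to 0$ on the axis $\mathbb{R}\times\{0\}$ (with nonzero first coordinate) have $\widehat{N}_{\Omega_{SC}}(x^k)=\{0\}\times\mathbb{R}$, and points on $\{0\}\times\mathbb{R}$ contribute $\mathbb{R}\times\{0\}$; taking the union of all such limits yields $N_{\Omega_{SC}}(0)=\Omega_{SC}$. For the directional regular normal cone $\widehat{N}_{T_{\Omega_{SC}}(a)}(d)$, I would feed the already-computed tangent cones into the definition of $\widehat N$ applied to the cone $T_{\Omega_{SC}}(a)$: when $T_{\Omega_{SC}}(a)$ is a subspace the answer is its orthogonal complement (independent of $d$, provided $d$ lies in it); when $T_{\Omega_{SC}}(a)=\Omega_{SC}$ (the origin case) one reduces to computing $\widehat N_{\Omega_{SC}}(d)$, which I have from the second formula — $\{0\}\times\mathbb{R}$ if $d_1\neq 0$, $\mathbb{R}\times\{0\}$ if $d_2\neq 0$, and $\{(0,0)\}$ if $d=0$. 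Finally, $N_{\Omega_{SC}}(a;d)$ is computed from its definition as limits of regular normals at $x^*+t_kd^k$; the key point is that for $d$ on one axis, all sufficiently-close perturbation points $x^*+t_kd^k$ stay near that axis (for the relevant branch), pinning down the normal directions, while for $d=0$ one recovers the full limiting normal cone $N_{\Omega_{SC}}(0)=\Omega_{SC}$. To get $N_{\Omega_{SC}}(a;d)=N_{\Omega_{SC}}^i(a;d)$, I would verify directional regularity by checking that the "for all $t_k\downarrow 0$" version $N^i$ yields the same sets — this follows because in each case the limiting sequences can be chosen essentially canonically (constant direction sequences), so the inner and outer limits coincide; alternatively one can invoke Proposition~3.5 of \cite{Ye-Zhou2018MP} together with the structure, but the direct check is cleanest here.

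The main obstacle is bookkeeping at the origin $a=(0,0)$: one must carefully split into the subcases $d_1=0\neq d_2$, $d_1\neq 0=d_2$, and $d_1=d_2=0$ (note $d$ must lie in $T_{\Omega_{SC}}(0)=\Omega_{SC}$, so $d_1 d_2=0$ always), and for each verify that the perturbed points $x^*+t_kd^k$ — with $d^k\to d$ but $d^k$ not necessarily on an axis — can be taken on the appropriate axis, so that the regular normal cones along the sequence are the constant subspace claimed. The subtlety is that $d^k$ itself need not satisfy $d^k_1 d^k_2=0$, but one shows the limsup is unchanged by restricting to $d^k\in\Omega_{SC}$, since any off-axis $d^k$ forces $x^*+t_kd^k\notin\Omega_{SC}$ and hence contributes nothing. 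Once these origin subcases are dispatched, the non-origin cases are immediate from local-subspace reasoning, and directional regularity of $\Omega_{SC}$ follows by comparing the two displayed formulas for $N_{\Omega_{SC}}(a;d)$ and $N^i_{\Omega_{SC}}(a;d)$, which we have shown to be identical.
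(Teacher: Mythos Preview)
Your proposal is correct and follows exactly the approach the paper takes: the paper simply states that the formulas are obtained ``by a straightforward calculation'' and gives no further details, so your explicit case-by-case verification from the definitions of tangent, regular, limiting, and directional normal cones is precisely what is intended. Your handling of the origin subcases and the observation that off-axis $d^k$ contribute nothing (since $x^*+t_kd^k\notin\Omega_{SC}$) is the right way to make the directional computations rigorous.
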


{Since $\mathbb{R}^p_-$ and  $ \{0\}^q $ are obviously directionally regular and the switching cone $\Omega_{SC}$ is directionally regular,  the  calculus rules for tangent and directional normal cones of $\Lambda$ as a Cartesian product in  Proposition \ref{proposition} hold. Hence for any $z^*$ such that $P(z^*)\in \Lambda$, we can obtain the expression for the tangent cone  to $\Lambda$ at $P(z^*)$ as follows:
\begin{eqnarray}\label{tangentcone}
{T_\Lambda(P(z^*))}= T_{\mathbb{R}^p_-}(g(z^*)) \times T_{\{0\}^q}( 0)\times \Pi_{i=1}^m T_{\Omega_{SC}} (G_i(z^*),H_i(z^*)).
\end{eqnarray}}

First we study  $\mathcal{Q}$ and $\mathcal{Q}_M$ stationary conditions for MPSC. Let $\mathcal{P}(\I_{GH}^*)$ be the set of all (disjoint) bipartitions of $\I_{GH}^*$. For fixed $(\beta_1,\beta_2)\in\mathcal{P}(\I_{GH}^*)$, we define the convex polyhedral cone
\begin{eqnarray*}
Q_{SC}^{\beta_1,\beta_2}:= T_{\mathbb{R}^p_-}(g(z^*)) \times \{0\}^q\times \prod_{i=1}^m \tau_i^{\beta_1,\beta_2}
\end{eqnarray*}
where $\tau_i^{\beta_1,\beta_2}:=T_{\Omega_{SC}}(G_i(z^*),H_i(z^*))$ if $i\in\I_G^*\cup\I_H^*$ and
\begin{eqnarray*}
\tau_i^{\beta_1,\beta_2}:=\left\{\begin{array}{cc}
\{0\}\times\mathbb{R}&  {\rm if}\ i\in\beta_1,\\
\mathbb{R}\times\{0\}& {\rm if}\ i\in \beta_2.\end{array}\right.
\end{eqnarray*}
{By (\ref{tangentcone}) and the formula for the tangent cone to $\Omega_{SC}$  in Lemma \ref{normal cone lema}, it is easy to see that $Q_{SC}^{\beta_1,\beta_2}$ is a subset of $T_\Lambda(P(z^*))$ as required by $\mathcal{Q}$-stationarity. Moreover, similarly to \cite[Lemma 3]{Benko-Gfrerer2017}, we can show that for any  $(\beta_1,\beta_2)\in\mathcal{P}(\I_{GH}^*)$,
$$(Q_{SC}^{\beta_1,\beta_2})^\circ \cap (Q_{SC}^{\beta_2,\beta_1})^\circ=\widehat{N}_\Lambda(P(z^*)).$$
Hence according to the discussion in Section \ref{sec3}, $Q_1:=Q_{SC}^{\beta_1,\beta_2}, Q_2:=Q_{SC}^{\beta_2,\beta_1}$ would be a good choice for the $\mathcal{Q}$-stationarity. Similar to \cite[Proposition 4]{Benko-Gfrerer2017}, we can derive the definition of $\mathcal{Q}$-stationarity for MPSC by using the corresponding definitions for the disjunctive program in Definition \ref{definition of Q-statioanry} by using $(Q_1,Q_2):=(Q_{SC}^{\beta_1,\beta_2},Q_{SC}^{\beta_2,\beta_1})$. By Definition \ref{definition of Q-statioanry}, $z^*$ is $\mathcal{Q}$-stationary with respect to $(Q_{SC}^{\beta_1,\beta_2},Q_{SC}^{\beta_2,\beta_1})$ if
\begin{eqnarray*}
-\nabla f(z^*)& \in  & \nabla P(z^*)^T\left((Q_{SC}^{\beta_1,\beta_2})^\circ \cap(\ker\nabla P(z^*)^T+(Q_{SC}^{\beta_2,\beta_1})^\circ\right)\\
&=&  \left \{\sum_{i=1}^p\lambda_i^g\nabla g_i(z^*)+\sum_{i=1}^q\lambda_i^h\nabla h_i(z^*)+\sum_{i=1}^m\left(\lambda_i^G\nabla G_i(z^*)+\lambda_i^H\nabla H_i(z^*)\right) \right . \\
&& \qquad\qquad \left| (\lambda^h,\lambda^g,\lambda^G,\lambda^H)\in  (Q_{SC}^{\beta_1,\beta_2})^\circ \cap(\ker\nabla p(z^*)^T+(Q_{SC}^{\beta_2,\beta_1})^\circ)  \right \}.
\end{eqnarray*}}

{Next we are aiming to find a formula for $(Q_{SC}^{\beta_1,\beta_2})^\circ \cap\left(\ker\nabla P(z^*)^T+(Q_{SC}^{\beta_2,\beta_1})^\circ\right) $ in the above.
 Obviously, we have $(Q_{SC}^{\beta_1,\beta_2})^\circ=N_{\mathbb{R}_-^p} (g(z^*))\times \mathbb{R}^q  \times   (\prod_{i=1}^m \tau_i^{\beta_1,\beta_2})^\circ$ and the set $(Q_{SC}^{\beta_1,\beta_2})^\circ\cap \left(\ker\nabla P(z^*)^T+(Q_{SC}^{\beta_2,\beta_1})^\circ\right) $  consists of all $\lambda=(\lambda^h, \lambda^g,\lambda^G,\lambda^H)$ such that there exists $\mu=(\mu^h,\mu^g,\mu^G,\mu^H) \in \ker \nabla P(z^*)^T$ and $(\eta^h,\eta^g,\eta^G,\eta^H) \in (Q_{SC}^{\beta_2,\beta_1})^\circ=N_{\mathbb{R}_-^p} (g(z^*))\times \mathbb{R}^q  \times   (\prod_{i=1}^m \tau_i^{\beta_2,\beta_1})^\circ$ such that
 $$\lambda=\mu +\eta\in (Q_{SC}^{\beta_1,\beta_2})^\circ.$$
We now analyse the following different cases.
\begin{itemize}
\item Equality constraints:  We obtain $\lambda^h=\mu^h+\eta^h\in \mathbb{R}^q, \eta^h \in \mathbb{R}^q$, i.e., $\lambda^h,\mu^h \in \mathbb{R}^q$.
\item Inequality constraints: For $i \in \I_g^*${,} we have $\lambda^g_i=\mu^g_i+\eta^g_i \geq 0, \eta_i^g \geq 0$ or equivalently $\lambda_i^g \geq \max \{0, \mu_i^g \}$, whereas for $i\in \{1,\dots, p\}\setminus \I_g^*$, we obtain $\lambda_i^g=\mu_i^g=0$.
\item  {$i\in \I_G^*$}: Since $(\tau_i^{\beta_1,\beta_2})^\circ=\mathbb{R}\times \{0\}$, we obtain $\lambda_i^H=\eta_i^H=0$ and so $\mu_i^H=0$.
\item  {$i\in \I_H^*$}: Similarly as in the previous case{,} we obtain $\lambda_i^G=\mu_i^G=0.$
\item $i\in \beta_1$: Since $(\tau_i^{\beta_1,\beta_2})^\circ=\mathbb{R}\times \{0\}$ and $(\tau_i^{\beta_2,\beta_1})^\circ=\{0\} \times \mathbb{R}$, we have
$$(\lambda_i^G,\lambda_i^H)=(\mu_i^G,\mu_i^H)+(\eta_i^G,\eta_i^H)\in \mathbb{R}\times \{0\}$$
and $(\eta_i^G,\eta_i^H)\in \{0\} \times \mathbb{R}$.  Equivalently, we obtain $\lambda_i^H=0$ and $\lambda_i^G=\mu_i^G$.
\item $i\in \beta_2$: Similarly as in the previous case{,} we obtain $\lambda_i^G=0$ and $\lambda_i^H=\mu_i^H$.
\end{itemize}
We now denote two multiplier sets
\begin{eqnarray*}
\begin{array}{rr}\mathcal{R}_{SC}:=\{(\mu^g,\mu^h,\mu^G,\mu^H)
\in\mathbb{R}^p\times\mathbb{R}^q\times\mathbb{R}^m\times\mathbb{R}^m| \mu_i^g=0,i=\{1,\cdots,p\}\setminus\I_g^*,\quad \\
\mu_i^G=0,i\in\I_H^*,\mu_i^H=0,i\in\I_G^* \}\end{array}
\end{eqnarray*}}
and
\small{\begin{eqnarray*}
\mathcal{N}_{SC}:=
\left\{(\mu^g,\mu^h,\mu^G,\mu^H)\in \mathcal{R}_{SC}\left|\sum_{i=1}^p\mu_i^g\nabla g_i(z^*)+\sum_{i=1}^q\mu_i^h\nabla h_i(z^*)+\sum_{i=1}^m(\mu_i^G\nabla G_i(z^*)+\mu_i^H\nabla H_i(z^*))=0\right.\right\}.
\end{eqnarray*}}
Based on the discussion above, we can now give the following definition.
\begin{defi}\rm
Let $z^*\in\F$.
We say that $z^*$ is $\mathcal{Q}$-stationary for MPSC (\ref{MPSC}) with respect to $(\beta_1,\beta_2)\in\mathcal{P}(\I_{GH}^*)$ if  there exists  multipliers  $(\lambda^g,\lambda^h,\lambda^G,\lambda^H)\in \mathcal{R}_{SC}$ such that
$$0=\nabla f(z^*)+\sum_{i=1}^p\lambda_i^g\nabla g_i(z^*)+\sum_{i=1}^q\lambda_i^h\nabla h_i(z^*)+\sum_{i=1}^m(\lambda_i^G\nabla G_i(z^*)+\lambda_i^H\nabla H_i(z^*))$$
and $(\mu^g,\mu^h,\mu^G,\mu^H)\in \mathcal{N}_{SC}$
such that
$$ \lambda_i^g\geq\max\{\mu_i^g,0\},i\in\I_g^*,\lambda_i^H=0,
\lambda^G_i=\mu^G_i, i\in\beta_1,\lambda_i^G=0,\lambda^H_i=\mu^H_i, i\in\beta_2.$$



\end{defi}

It is easy to see that for MPSC,  an $\mathcal{Q}$-stationary point  is also {an M-stationary point}. Hence  for MPSC, $\mathcal{Q}$-stationary condition coincides with  $\mathcal{Q}_M$-stationary condition.

Applying Proposition \ref{Prop3.3}, similar to \cite[Proposition 4 and Theorem 5]{Benko-Gfrerer2017}, we have the following optimality conditions.
\begin{prop}Let $z^*$ be a local optimal solution for MPSC. If GGCQ holds at $z^*$, then $z^*$ is $\mathcal{Q}$-stationary with respect to every pair $(\beta_1,\beta_2)\in\mathcal{P}(\I_{GH}^*)$.  Conversely, if $z^*$ is $\mathcal{Q}$-stationary with respect to some pair $(\beta_1,\beta_2)\in\mathcal{P}(\I_{GH}^*)$ such that for every $\mu\in \mathcal{N}_{SC}$ there holds
\begin{eqnarray}
&&\mu_i^G\mu_{i'}^G=0,\mu_i^H\mu_{i'}^H=0\quad\forall(i,i')\in\beta_1\times\beta_2,\label{propassum1}\\
&&\mu_i^G\mu_{i'}^H=0\quad\forall(i,i')\in\beta_1\times\beta_1,\label{propassum2}\\
&&\mu_i^G\mu_{i'}^H=0\quad\forall(i,i')\in\beta_2\times\beta_2.\label{propassum3}
\end{eqnarray}
Then $z^*$ is S-stationary.
\end{prop}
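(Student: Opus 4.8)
The plan is to apply Proposition~\ref{Prop3.3} to the disjunctive reformulation (\ref{equivalent-MPSC})--(\ref{definingfunctions}), using the concrete choice $Q_1 := Q_{SC}^{\beta_1,\beta_2}$ and $Q_2 := Q_{SC}^{\beta_2,\beta_1}$ as the pair in $\mathcal{Q}$. The first direction is immediate: if $z^*$ is a local optimal solution and GGCQ holds at $z^*$, then by Proposition~\ref{Prop3.3}, $z^*$ is $\mathcal{Q}$-stationary with respect to every pair in $\mathcal{Q}$; since each $(Q_{SC}^{\beta_1,\beta_2}, Q_{SC}^{\beta_2,\beta_1})$ is such a pair and the translation of $\mathcal{Q}$-stationarity into MPSC data was carried out in the discussion preceding the definition of $\mathcal{Q}$-stationarity for MPSC, we conclude $\mathcal{Q}$-stationarity with respect to every $(\beta_1,\beta_2)\in\mathcal{P}(\I_{GH}^*)$.

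For the converse, I would invoke the second half of Proposition~\ref{Prop3.3}: if $z^*$ is $\mathcal{Q}$-stationary with respect to some pair $(Q_1,Q_2)\in\mathcal{Q}$ satisfying condition (\ref{sufficentc}), then $z^*$ is S-stationary (and hence B-stationary). So the heart of the proof is to verify that, under the stated conditions (\ref{propassum1})--(\ref{propassum3}), the pair $(Q_1,Q_2)=(Q_{SC}^{\beta_1,\beta_2},Q_{SC}^{\beta_2,\beta_1})$ fulfills (\ref{sufficentc}), i.e.
$$\nabla P(z^*)^T\!\left(Q_1^\circ \cap (\ker\nabla P(z^*)^T + Q_2^\circ)\right)\subseteq \nabla P(z^*)^T \widehat N_\Lambda(P(z^*)).$$
Concretely, one takes an arbitrary $\lambda=(\lambda^g,\lambda^h,\lambda^G,\lambda^H)$ in the left-hand set, so there is $\mu\in\ker\nabla P(z^*)^T$ (that is, $\mu\in\mathcal{N}_{SC}$ after accounting for the support restrictions) and $\eta\in Q_2^\circ$ with $\lambda=\mu+\eta\in Q_1^\circ$; the case analysis already recorded in the excerpt gives $\lambda_i^H=0$, $\lambda_i^G=\mu_i^G$ for $i\in\beta_1$ and $\lambda_i^G=0$, $\lambda_i^H=\mu_i^H$ for $i\in\beta_2$. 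The goal is to produce $\tilde\lambda\in\widehat N_\Lambda(P(z^*))$ with $\nabla P(z^*)^T\tilde\lambda = \nabla P(z^*)^T\lambda$. Recall $\widehat N_\Lambda(P(z^*))$ forces $\tilde\lambda_i^G=\tilde\lambda_i^H=0$ for all $i\in\I_{GH}^*$ (from the formula for $\widehat N_{\Omega_{SC}}$ at the origin), so the natural candidate is $\tilde\lambda := \lambda - \mu'$ where $\mu'$ is the part of $\mu$ supported on $\I_{GH}^*$-indexed $G,H$ components; since $\mu\in\mathcal{N}_{SC}$ one subtracts a vector in $\ker\nabla P(z^*)^T$ only if that truncated $\mu'$ is itself in the kernel, which is exactly where (\ref{propassum1})--(\ref{propassum3}) enter — these bilinear vanishing conditions guarantee that the $G$- and $H$-blocks on $\beta_1\cup\beta_2$ cannot simultaneously carry mass in a way that obstructs moving the multiplier into $\widehat N_\Lambda$, mirroring \cite[Proposition~4]{Benko-Gfrerer2017}.

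The main obstacle I anticipate is precisely this bookkeeping: showing that the conditions (\ref{propassum1})--(\ref{propassum3}) on every $\mu\in\mathcal{N}_{SC}$ are exactly what is needed to pass from the $\mathcal{Q}$-stationarity multipliers $(\lambda,\mu)$ to a genuine S-multiplier, i.e.\ a multiplier with $\lambda_i^G=\lambda_i^H=0$ for all $i\in\I_{GH}^*$ and the correct signs on $\I_g^*$. One must argue that the product conditions force, for each $i\in\I_{GH}^*$, that the relevant kernel component can be "absorbed" so that the resulting multiplier lies in $\widehat N_\Lambda(P(z^*)) = N_{\mathbb{R}^p_-}(g(z^*))\times\mathbb{R}^q\times\prod_i\widehat N_{\Omega_{SC}}(G_i(z^*),H_i(z^*))$. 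I would handle this index-by-index on $\beta_1$, $\beta_2$, and the cross terms, exactly paralleling the structure of the case analysis already in the paper, and then conclude by (\ref{B-S}) that $z^*$ is S-stationary. The sign bookkeeping on $\I_g^*$ ($\lambda_i^g\geq\max\{\mu_i^g,0\}$ versus $\lambda_i^g\geq 0$) is routine since subtracting a kernel element supported there does not affect feasibility of the sign constraint after the standard adjustment.
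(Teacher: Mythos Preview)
Your overall strategy matches the paper's: invoke Proposition~\ref{Prop3.3} for the forward direction, and for the converse adjust the $\mathcal{Q}$-multiplier $\lambda$ by a kernel element so that the result lies in $\widehat N_\Lambda(P(z^*))$. However, your proposed adjustment has a concrete flaw, and you are missing the key mechanism that makes conditions (\ref{propassum1})--(\ref{propassum3}) work.

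You suggest taking $\tilde\lambda:=\lambda-\mu'$ where $\mu'$ is the part of $\mu$ supported on the $\I_{GH}^*$-indexed $G,H$ components, and then worry whether $\mu'$ itself lies in $\ker\nabla P(z^*)^T$. In general it does not, so this candidate fails. The paper instead subtracts the \emph{full} $\mu$: set $\tilde\lambda:=\lambda-\mu$. Since $\mu\in\mathcal{N}_{SC}\subseteq\ker\nabla P(z^*)^T$, the stationarity equation is preserved automatically. The inequality-constraint signs survive because $\lambda_i^g\geq\max\{\mu_i^g,0\}\geq\mu_i^g$ gives $\tilde\lambda_i^g\geq 0$ on $\I_g^*$, and the $\I_G^*,\I_H^*$ blocks are untouched since $\mu$ has the same support restrictions there as $\lambda$.

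The step you have not identified is a \emph{case split} that explains how the bilinear conditions enter. If already $\lambda_i^G=0$ for all $i\in\beta_1$ and $\lambda_i^H=0$ for all $i\in\beta_2$, then combined with $\lambda_i^H=0$ on $\beta_1$ and $\lambda_i^G=0$ on $\beta_2$ (from $\mathcal{Q}$-stationarity), $\lambda$ is already an S-multiplier. Otherwise there is some $j\in\beta_1$ with $\lambda_j^G=\mu_j^G\neq 0$ (or the symmetric case on $\beta_2$). Fixing this $j$, condition (\ref{propassum1}) forces $\mu_{i'}^G=0$ for every $i'\in\beta_2$, and condition (\ref{propassum2}) forces $\mu_{i'}^H=0$ for every $i'\in\beta_1$. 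With these additional vanishings, one checks directly that $\tilde\lambda=\lambda-\mu$ has $\tilde\lambda_i^G=\tilde\lambda_i^H=0$ for every $i\in\beta_1\cup\beta_2=\I_{GH}^*$, hence $\tilde\lambda\in\widehat N_\Lambda(P(z^*))$ and $z^*$ is S-stationary. The symmetric case uses (\ref{propassum1}) and (\ref{propassum3}). Your index-by-index bookkeeping plan would not uncover this, because the argument is not local to a single index: one nonzero $\mu_j^G$ is leveraged to kill components at \emph{all other} indices via the product hypotheses.
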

\begin{proof} {The first statement follows directly   from Proposition \ref{Prop3.3}. We now prove the converse statement.} Suppose that $z^*$ is $\mathcal{Q}$-stationary with respect to some pair $(\beta_1,\beta_2)\in\mathcal{P}(\I_{GH}^*)$. Then by definition, there exist $(\lambda^g,\lambda^h,\lambda^G,\lambda^H)\in \mathcal{R}_{SC}$ and $(\mu^g,\mu^h,\mu^G,\mu^H)\in \mathcal{N}_{SC}$  satisfying the $\mathcal{Q}$-stationary condition.
{Since by the definition of $\mathcal{Q}$-stationarity, $\lambda_i^H=0, i\in \beta_1$ and $\lambda_i^G=0, i\in \beta_2$. So if $\lambda_i^G=0, i\in \beta_1$ and $\lambda_i^H=0, i\in \beta_2$, then $z^*$ must be  S-stationary in this case.} Otherwise, either there is some $j\in \beta_1$ such that $\lambda_j^G\not =0$ or some $j\in \beta_2$ such that $\lambda_j^H\not =0$. First consider the case when $\lambda_j^G\not =0$ for some $j\in \beta_1$. Set $(\tilde \lambda^g,\tilde\lambda^h,\tilde\lambda^G,\tilde\lambda^H):=(\lambda^g-\mu^g,\lambda^h-\mu^h,\lambda^G-\mu^G,\lambda^H-\mu^H)$. Then
$$0=\nabla f(z^*)+\sum_{i=1}^p\tilde  \lambda_i^g\nabla g_i(z^*)+\sum_{i=1}^q \tilde \lambda_i^h\nabla h_i(z^*)+\sum_{i=1}^m(\tilde \lambda_i^G\nabla G_i(z^*)+\tilde \lambda_i^H\nabla H_i(z^*))$$
and
{$$\tilde  \lambda_i^g= 0, i\notin \I_g^*, \tilde  \lambda_i^g\geq 0, i\in \I_g^*, \tilde \lambda_i^G=0,i\in\I_H^*,\tilde \lambda_i^H=0,i\in\I_G^*, \tilde \lambda_i^G=0, i\in \beta_1, \tilde \lambda_i^H=0, i\in \beta_2.$$}Further, since $0\not = \lambda_j^G=\mu_j^G$, {then by (\ref{propassum1}) we have $\mu_i^G=0 \ \forall i\in \beta_2$  and by (\ref{propassum2}) we have $\mu_i^H=0\  \forall i\in \beta_1$ .} Consequently, $\tilde \lambda_i^G=0$, and {$\tilde\lambda_i^H=0$}  holds for all $i\in \beta_1\cup \beta_2$. Hence $z^*$ is  S-stationary. The proof for the case when $\lambda_j^H\not =0$ for some $j\in \beta_2$ is similar and  (\ref{propassum1}) and  (\ref{propassum3}) are used to derive the result in this case.
\end{proof}
%
%

{Applying Definition \ref{AM-stationary}-\ref{AM-regular} and Lemma \ref{normal cone lema}, we have the following AM-stationary condition and AM-regular for MPSC.
\begin{defi}\rm
Let $z^*\in\F$. We say that $z^*$ is AM-stationary of MPSC if there exist sequences $\{z^k\}\subseteq\F,\,\{\varepsilon^k\}\subseteq\mathbb{R}^n$  and multipliers $\left\{(\lambda^{g,k},\lambda^{h,k},\lambda^{G,k},\lambda^{H,k})\right\}\subseteq \mathbb{R}^p\times\mathbb{R}^q\times\mathbb{R}^m\times\mathbb{R}^m$ with $\varepsilon^k\rightarrow0,z^k\rightarrow z^*$ such that $\forall k$
\begin{eqnarray*}
&&\nabla f(z^k)+\sum_{i=1}^p\lambda^{g,k}_i\nabla g_i(z^k)+\sum_{i=1}^q\lambda^{h,k}_i\nabla h_i(z^k)+\sum_{i=1}^m(\lambda^{G,k}_i\nabla G_i(z^k)+\lambda^{H,k}_i\nabla H_i(z^k))=\varepsilon^k,\\
&&\lambda_i^{g,k}=0,if\ g_i(z^k)<0;\lambda^{g,k}_i\geq 0\ if\ g_i(z^k)=0;\
\lambda^{G,k}_i=0,\ if\ H_i(z^k)=0\neq G_i(z^k);\\
&&\lambda^{H,k}_i=0,\ if\ G_i(z^k)=0\neq H_i(z^k);
\ \lambda^{G,k}_i\lambda^{H,k}_i=0,\ if\ G_i(z^k)=H_i(z^k)=0.
\end{eqnarray*}
\end{defi}
\begin{defi}\rm
Let $z^*\in\F$. Define a set-valued mapping $\mathcal{K}:\mathbb{R}^n\rightrightarrows\mathbb{R}^{n}$ by means of
\begin{eqnarray*}
\mathcal{K}(z):=\left\{\begin{array}{ll} \sum_{i=1}^p\lambda^{g}_i\nabla g_i(z)+\sum_{i=1}^q\lambda^{h}_i\nabla h_i(z)\\+\sum_{i=1}^m(\lambda^{G}_i\nabla G_i(z)+\lambda^{H}_i\nabla H_i(z))\end{array}\left|\begin{array}{ll}(\lambda^{g},\lambda^{h},\lambda^{G},\lambda^{H})\}\subseteq\mathbb{R}^p_+\times\mathbb{R}^q\times\mathbb{R}^m\times\mathbb{R}^m,\\
\lambda_i^g=0\ {\rm for}\ i\notin\I_g^*,\\
\lambda_i^H=0\ {\rm for}\ i\in\I_G^*,\lambda_i^G=0\ {\rm for}\ i\in\I_H^*,\\
\lambda_i^G\lambda_i^H=0\ {\rm for}\ i\in\I_{GH}^*\end{array}\right.\right\}.
\end{eqnarray*}
We say that  $z^*$ is AM-regular if  the following condition holds:
$$\limsup_{z\rightarrow z^*}\mathcal{K}(z)\subseteq \mathcal{K}(z^*).$$
\end{defi}}
{Applying Proposition \ref{AM-Prop}, we have the following conclusion.
\begin{thm}Let $z^*$ be a local minimizer of MPSC, then $z^*$ is AM-stationary. {Moreover,} suppose that  $z^*$ is AM-regular. Then $z^*$ is M-stationary.
\end{thm}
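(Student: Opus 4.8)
The plan is to obtain both assertions by applying Proposition \ref{AM-Prop} to the disjunctive reformulation (\ref{equivalent-MPSC})--(\ref{definingfunctions}) of MPSC, and then translating the abstract AM-stationarity and AM-regularity conditions into the MPSC-specific statements above by means of the normal cone formulas in Lemma \ref{normal cone lema} together with the product rule for (directional) normal cones in Proposition \ref{proposition}. First I would observe that (\ref{equivalent-MPSC}) has exactly the same objective and the same feasible region $\F$ as MPSC (\ref{MPSC}), so any local minimizer $z^*$ of MPSC is a local minimizer of the MPDC (\ref{equivalent-MPSC}); this makes Proposition \ref{AM-Prop} directly applicable.

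For the first assertion, Proposition \ref{AM-Prop} yields sequences $z^k\to z^*$, $\varepsilon^k\to 0$ with $\varepsilon^k\in\nabla f(z^k)+\nabla P(z^k)^T N_\Lambda(P(z^k))$ for all $k$. Since $\Lambda=\mathbb{R}^p_-\times\{0\}^q\times\Omega_{SC}^m$ is a Cartesian product of directionally regular factors (the orthants trivially, and each switching cone by Lemma \ref{normal cone lema}), Proposition \ref{proposition} with $d=0$ gives
$$N_\Lambda(P(z^k))=N_{\mathbb{R}^p_-}(g(z^k))\times\{0\}^q\times\prod_{i=1}^m N_{\Omega_{SC}}(G_i(z^k),H_i(z^k)).$$
Writing a generic element of this product as $(\lambda^{g,k},\lambda^{h,k},\lambda^{G,k},\lambda^{H,k})$ and expanding $\nabla P(z^k)^T$ according to (\ref{definingfunctions}), the membership turns into the displayed stationarity equation in the MPSC AM-stationary definition; the sign conditions on $\lambda^{g,k}$ are precisely the description of $N_{\mathbb{R}^p_-}(g(z^k))$, and the three conditions on $(\lambda^{G,k}_i,\lambda^{H,k}_i)$ are exactly the case distinction for $N_{\Omega_{SC}}(G_i(z^k),H_i(z^k))$ in Lemma \ref{normal cone lema}, according to whether $G_i(z^k)$ and/or $H_i(z^k)$ vanish. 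This establishes AM-stationarity of $z^*$ for MPSC.

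For the second assertion I would check that the set-valued map $\mathcal{K}$ from Definition \ref{AM-regular} attached to the reformulation, namely $z\mapsto\nabla P(z)^T N_\Lambda(P(z^*))$, coincides with the map $\mathcal{K}$ defined just above the theorem for MPSC. Applying Proposition \ref{proposition} and Lemma \ref{normal cone lema} at the fixed point $P(z^*)$ gives $N_\Lambda(P(z^*))=N_{\mathbb{R}^p_-}(g(z^*))\times\{0\}^q\times\prod_{i=1}^m N_{\Omega_{SC}}(G_i(z^*),H_i(z^*))$, which translates into exactly the multiplier constraints $\lambda^g\in\mathbb{R}^p_+$ with $\lambda^g_i=0$ for $i\notin\I_g^*$, $\lambda^H_i=0$ for $i\in\I_G^*$, $\lambda^G_i=0$ for $i\in\I_H^*$, and $\lambda^G_i\lambda^H_i=0$ for $i\in\I_{GH}^*$; composing with $\nabla P(z)^T$ reproduces the generator set defining $\mathcal{K}(z)$ for MPSC. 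Hence AM-regularity of $z^*$ for MPSC (as defined here) is literally AM-regularity of $z^*$ for (\ref{equivalent-MPSC}), so the second part of Proposition \ref{AM-Prop} gives M-stationarity of $z^*$ for the MPDC, i.e. $0\in\nabla f(z^*)+\nabla P(z^*)^T N_\Lambda(P(z^*))$, which unpacks — again via Lemma \ref{normal cone lema} and Proposition \ref{proposition} — into the M-stationarity of MPSC in the sense of Definition \ref{WMS}. The main obstacle is purely the bookkeeping in these identifications: one must be careful that the normal cones are evaluated at the right points — at the moving iterate $P(z^k)$ in the AM-stationarity translation, but at the fixed reference $P(z^*)$ inside the map $\mathcal{K}$ governing AM-regularity — and that the hypothesis of Proposition \ref{proposition} (all but at most one factor directionally regular) is met, which holds here because every factor of $\Lambda$ is directionally regular; no genuinely new estimate beyond these translations is required.
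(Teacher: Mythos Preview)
Your proposal is correct and follows exactly the route the paper takes: the paper simply states ``Applying Proposition~\ref{AM-Prop}, we have the following conclusion'' and leaves the unpacking via Lemma~\ref{normal cone lema} and the product rule implicit, which is precisely what you spell out. One minor slip: in your displayed formulas for $N_\Lambda(P(z^k))$ and $N_\Lambda(P(z^*))$ the middle factor should be $\mathbb{R}^q$ (the normal cone to $\{0\}^q$), not $\{0\}^q$; your subsequent translation already treats $\lambda^h$ as unconstrained, so this is only a typographical error.
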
}
We now apply Propositions \ref{Thm3.1} and \ref{Thm3.2}  to MPSC in the form of (\ref{equivalent-MPSC}).
{By the expressions for $T_{\Lambda}(P(z^*))$  in (\ref{tangentcone}) and the expression for the tangent cone of the switching set in Lemma  \ref{normal cone lema}}, the
 linearization cone  of the feasible region ${\cal F}$ can be expressed as follows:
 \begin{eqnarray*}
L_{\cal F}^{\rm lin}(z^*)&:=& \big\{d|\nabla P(z^*)d\in T_{\Lambda}(P(z^*)) \big \}\\
&=&\left\{d\in\mathbb{R}^n{\left|\begin{array}{ll}
\nabla g_i(z^*)d\leq0 & i\in \I_g^*,\\
\nabla h_j(z^*)d=0 & j=1,\cdots,q,\\
\nabla G_i(z^*)d=0 & i\in\I_G^*,\\
\nabla H_i(z^*)d=0 & i\in\I_H^*,\\
{(\nabla G_i(z^*)d)(\nabla H_i(z^*)d)=0}  & i\in\I_{GH}^*
\end{array}\right.}\right\}.
\end{eqnarray*}

 Denote the critical cone  at $z^*$ by
$
{\cal{C}}_{\cal F}(z^*):=\{d\in L^{\rm lin}_\F(z^*)|\nabla f(z^*)d\leq0\}.
$
%
Given $d\in L_{\cal F}^{\rm lin}(z^*)$, we define
\begin{eqnarray*}
&&{\cal I}_g^*(d):=\left \{i\in{\cal I}_g^*\ |\ \nabla g_i(z^*)d=0 \right \}, \\
&&{\cal I}_G^*(d):=\left \{i\in{\cal I}_{GH}^*\ |\ \nabla G_i(z^*)d=0,\ \nabla H_i(z^*)d\neq 0 \right  \},\\
&&{\cal I}_H^*(d):= \left \{i\in{\cal I}_{GH}^*\ |\ \nabla G_i(z^*)d\neq 0,\ \nabla H_i(z^*)d=0 \right \},\\
&&{\cal I}_{GH}^*(d):=\left \{i\in{\cal I}_{GH}^*\ |\ \nabla G_i(z^*)d=\nabla H_i(z^*)d=0 \right  \}.
\end{eqnarray*}
Then by the Cartesian product rule in  Proposition \ref{proposition}, the expressions for the tangent cone and the  directional limiting normal cone to the switching cone in Lemma  \ref{normal cone lema}  we have,
\begin{eqnarray*}
N_\Lambda(P(z^*);\nabla P(z^*)d)
&=& N_{\mathbb{R}^p_-}(g(z^*);  \nabla g(z^*)d) \times N_{\{0\}^q}( h(z^*);\nabla h(z^*)d)\\&&\times \Pi_{i=1}^m N_{\Omega_{SC}} ((G_i(z^*),H_i(z^*));(\nabla G_i(z^*)d, \nabla H_i(z^*)d)),\end{eqnarray*}
with
\begin{eqnarray*}
N_{\Omega_{SC}} ((G_i(z^*),H_i(z^*));(\nabla G_i(z^*)d, \nabla H_i(z^*)d)) = \left\{( \lambda^G,\lambda^H)\left |\begin{array}{ll}
\lambda_i^G=0 & i\in\I_H^*\cup \I_H^*(d),\\
\lambda_i^H=0 & i\in\I_G^*\cup \I_G^*(d),\\
\lambda_i^G\lambda_i^H=0 & i\in\I_{GH}^*(d)
\end{array}\right. \right\}.
\end{eqnarray*}
Since $d\in L_{\cal F}^{\rm lin}(z^*)$ implies $\nabla g_i(z^*) d\leq 0 \ \forall i\in \I_g^*$ and  by (\ref{convexcase})$$N_{\mathbb{R}^p_-}(g(z^*);  \nabla g(z^*)d)=N_{\mathbb{R}^p_-}(g(z^*)) \cap \{\nabla g(z^*)d\}^\perp,$$
for any $\lambda^g\in N_{\mathbb{R}^p_-}(g(z^*);  \nabla g(z^*)d)$, we have that $\lambda_i^g =0 \ \forall i\not \in
\I_g^*(d)$ and $\lambda_i^g \geq 0 \ \forall i \in
\I_g^*(d)$.
Hence
\begin{eqnarray}
{N_\Lambda(P(z^*);\nabla P(z^*)d)} 
= \left\{( \lambda^g,\lambda^h,\lambda^G,\lambda^H)\left |\begin{array}{ll}
\lambda_i^g\geq 0 & i\in \I_g^*(d),\  \lambda_i^g= 0,  \  i \not \in \I_g^*(d),\\
\lambda_i^G=0 & i\in\I_H^*\cup \I_H^*(d),\\
\lambda_i^H=0 & i\in\I_G^*\cup \I_G^*(d),\\
\lambda_i^G\lambda_i^H=0 & i\in\I_{GH}^*(d)
\end{array}\right. \right\}.\label{Ncone}
\end{eqnarray}
Based on the {directional} M-stationary condition  (\ref{firstorder}) and directional S-stationary condition (\ref{directionalS}),
we now define the directional version of the W, S, M-stationarity for MPSC.
\begin{defi}\rm\label{doptimalitycondition}  Let $z^*$ be a feasible solution of MPSC and $d \in {\cal C}_{\cal F}(z^*)$.
We say that $z^*$ is a  W-stationary point of MPSC (\ref{MPSC})  in direction $d$ if there exists $(\lambda^g,\lambda^h,\lambda^G,\lambda^H)$ such that
\begin{eqnarray}
&& 0=\nabla f(z^*)+\sum_{i\in\I_g^*(d)}\lambda^g_i\nabla g_i(z^*)+\sum_{i=1}^q\lambda^h_i\nabla h_i(z^*)+\sum_{i=1}^m(\lambda^G_i\nabla G_i(z^*)+\lambda^H_i\nabla H_i(z^*)), \qquad  \label{W-stationary1-d}\\
&&\lambda^g_i\geq 0,\ i\in\I_g^*(d),\ \lambda^G_i=0,\ i\in\I_H^*\cup\I_H^*(d),\  \lambda^H_i=0,\ i\in\I_G^*\cup\I_G^*(d).\label{W-stationary4-d}
\end{eqnarray}


We say that $z^*$ is a M-stationary point of MPSC (\ref{MPSC}) in direction $d$ if there exists  $(\lambda^g,\lambda^h,\lambda^G,$ $\lambda^H)$ such that (\ref{W-stationary1-d})--(\ref{W-stationary4-d}) hold and
$\lambda^G_i\lambda^H_i=0,\ i\in\I_{GH}^*(d).$

We say that $z^*$ is a  S-stationary point of MPSC (\ref{MPSC}) in direction $d$  if there exists  $(\lambda^g,\lambda^h,\lambda^G,$ $\lambda^H)$ such that (\ref{W-stationary1-d})--(\ref{W-stationary4-d}) hold and
$\lambda^G_i=\lambda^H_i=0,\ i\in\I_{GH}^*(d).$
\end{defi}

Using the formula in (\ref{Ncone}), we have
\begin{eqnarray*}
{\rm span }N_\Lambda(P(z^*);\nabla P(z^*)d) = \left\{( \lambda^g,\lambda^h,\lambda^G,\lambda^H)
\left |\begin{array}{ll}
  \lambda_i^g= 0 &  \  i \not \in \I_g^*(d),\\
\lambda_i^G=0 & i\in\I_H^*\cup \I_H^*(d),\\
\lambda_i^H=0 & i\in\I_G^*\cup \I_G^*(d).
\end{array}\right.\right\}.
\end{eqnarray*}
Hence based on Definition \ref{Defn3.6}, we define the following directional version of the MPSC-LICQ.
\begin{defi}\rm\label{definition of MPSC LICQ(d)}
Let $z^*$ be a feasible solution of MPSC (\ref{MPSC}) and $d\in L_{\cal F}^{\rm lin}(z^*)$. We say that the MPSC-LICQ  in direction $d$ (MPSC-LICQ({$d$})) holds at $z^*$ if and only if the gradients \begin{eqnarray*}
\{\nabla g_i(z^*)|i\in\I_g^*(d)\}\cup\{\nabla h_j(z^*)|j=1,2,\cdots,q\}\cup\{\nabla G_i(z^*)|i\in\I_G^*\cup\I_G^*(d)\cup\I_{GH}^*(d)\}\\\cup\{\nabla H_i(z^*)|i\in\I_H^*\cup\I_H^*(d)\cup\I_{GH}^*(d)\}
\end{eqnarray*}
are linearly independent.
\end{defi}

Since $\I_g^*(0)=\I_g^*$, $\I_G^*(0)=\I_H^*(0)=\emptyset$ and $\I_{GH}^*(0)=\I_{GH}^*$,  it is easy to see that MPSC-LICQ(0) is exactly the MPSC-LICQ.

It is easy to see that MPSC is an ortho-disjunctive program. Hence by Definition \ref{directional normality}, 
it is easy to see that the  directional {quasi/pseudo-normality } for constraint system of MPSC (\ref{MPSC}) can be rewritten in the following form.

\begin{defi}\label{Defi4.3}\rm\label{definition of MPSC directional normality}
Let $z^*$ be a feasible solution of MPSC (\ref{MPSC}). $z^*$ is said to be  MPSC quasi- or pseudo-normal in  direction $ d\in L^{\rm lin}_\F(z^*)$ if  there exists no $(\lambda^g,\lambda^h,\lambda^G,\lambda^H)\neq0$ such that
\begin{itemize}
\item[\rm(i)]$0=\nabla g(z^*)^T\lambda^g+\nabla h(z^*)^T\lambda^h+\nabla G(z^*)^T\lambda^G+\nabla H(z^*)^T\lambda^H$;
\item[\rm(ii)]$\lambda^g_i\geq0,i\in\I_g^*(d);\lambda^g_i=0,i\notin\I^*_g(d)$;
$\lambda^H_i=0,i\in\I^*_G\cup\I^*_G(d)$;$\lambda^G_i=0,i\in\I^*_H\cup\I^*_H(d)$;
$\lambda^G_i\lambda^H_i=0,i\in\I^*_{G,H}(d)$;
\item[\rm(iii)]$\exists d^k\rightarrow d$ and $t_k\downarrow0$ such that
\begin{eqnarray*}
\left\{\begin{array}{ll}
\lambda^g_ig_i(z^*+t_kd^k)>0,\ {\rm if}\ \lambda^g_i\neq0,\\
\lambda^h_ih_i(z^*+t_kd^k)>0,\ {\rm if}\ \lambda^h_i\neq0,\\
\lambda^G_iG_i(z^*+t_kd^k)>0, \ {\rm if}\ \lambda^G_i\neq0,\\
\lambda^H_iH_i(z^*+t_kd^k)>0,\ {\rm if}\ \lambda^H_i\neq0,
\end{array}\right.
\end{eqnarray*}
or
\begin{eqnarray*}
{\lambda^g}^Tg(z^*+t_kd^k)+
{\lambda^h}^Th(z^*+t_kd^k)+
{\lambda^G}^TG(z^*+t_kd^k)+
{\lambda^H}^TH(z^*+t_kd^k)>0,
\end{eqnarray*}
respectively.
\end{itemize}
$z^*$ is said to be directionally quasi- or pseudo-normal if it is quasi- or pseudo-normal in all directions from $L^{\rm lin}_\F(z^*)$.
\end{defi}

{Note that MPSC quasi/pseudo-normality in direction $d=0$ coincides with  MPSC quasi/pseudo-normality defined as in \cite{Li-Guo}} and when $d\not =0$, the directional one is weaker.

We now apply Definition \ref{directional normality} to obtain FOSCMS/SOSCMS for MPSC.
\begin{defi}\label{Defi4.4} \rm
Let $z^*$ be a feasible solution of MPSC (\ref{MPSC}) and $d\in L_{\cal F}^{\rm lin}(z^*)$. We say that  MPSC first
order sufficient condition for metric subregularity (MPSC-FOSCMS) in direction $d$ holds at $z^*$ if   there exists no $(\lambda^g,\lambda^h,\lambda^G,\lambda^H)\neq0$ such that (i)--(ii) in Definition \ref{definition of MPSC directional normality} holds.
\end{defi}

{Note that  MPSC-FOSCMS in direction $d=0$ coincides with the MPSC-NNAMCQ defined as in \cite{Mehlitz MP}  and when $d\not =0$, MPSC-FOSCMS is weaker than MPSC-NNAMCQ.}
\begin{defi}\rm\label{second-order sufficient condition}
Let $z^*$ be a feasible solution of MPSC (\ref{MPSC}) and $d\in L_{\cal F}^{\rm lin}(z^*)$. We say that  MPSC second-order sufficient condition for metric subregularity (MPSC-SOSCMS) in direction $d$ holds at $z^*$ if there exists no $(\lambda^g,\lambda^h,\lambda^G,\lambda^H)\neq0$ such that  {(i)--(ii) in Definition \ref{definition of MPSC directional normality} hold}
and
{ $$d^T\nabla^2{\cal L}^0(z^*, \lambda^g,\lambda^h,\lambda^G,\lambda^H) d\geq0,$$
where ${\cal L}^0(z, \lambda^g,\lambda^h,\lambda^G,\lambda^H):=\langle  \lambda^g, g(z) \rangle +\langle  \lambda^h, h(z) \rangle+ \langle  \lambda^G, G(z) \rangle+\langle  \lambda^H, H(z) \rangle.$}
\end{defi}

 The following result follows from Propositions \ref{Thm3.1}-\ref{Thm3.2}. {The reader is referred to Figure 3 for sufficient conditions for MPSC quasi-normality.
\begin{thm}\label{stationary-d}
Let $z^*$ be a local minimizer for MPSC (\ref{MPSC}) and let $d\in {\cal {C}}_{\cal F}(z^*)$. If MPSC-LICQ(d) holds, then $z^*$ is an S-stationary point in direction $d$. If {MPSC quasi-normality}  holds at $z^*$ in direction $d$, then $z^*$ is an M-stationary point in direction $d$.
If $f$ and $F$ are twice  differentiable at $z^*$ then there exist an M-multiplier in direction $d$ denoted by $(\lambda^g,\lambda^h,\lambda^G,\lambda^H)$ such that the second-order condition holds:
$$d^T\nabla^2_z\mathcal{L}(z^*,\lambda^g,\lambda^h,\lambda^G,\lambda^H)d\geq0,$$
where ${\cal L}(z, \lambda^g,\lambda^h,\lambda^G,\lambda^H):=f(z)+\langle  \lambda^g, g(z) \rangle +\langle  \lambda^h, h(z) \rangle+ \langle  \lambda^G, G(z) \rangle+\langle  \lambda^H, H(z) \rangle.$
Conversely, suppose that $z^*$ is a feasible solution to MPSC and for each $0\not = d\in {\cal {C}}_{\cal F}(z^*)$, there is an S-multiplier denoted by $(\lambda^g,\lambda^h,\lambda^G,\lambda^H)$ and the second-order  condition
$$d^T\nabla^2_z\mathcal{L}(z^*,\lambda^g,\lambda^h,\lambda^G,\lambda^H)d> 0$$
holds, then $z^*$ is a strict local minimizer of MPSC.
\end{thm}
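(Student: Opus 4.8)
The plan is to derive the statement entirely by pulling the disjunctive-program results of Section \ref{sec3} back through the reformulation \eqref{equivalent-MPSC}--\eqref{C-set}, using the fact (established via Lemma \ref{normal cone lema} and Proposition \ref{proposition}) that all the relevant tangent and directional normal cones to $\Lambda$ decompose as Cartesian products and that $\Omega_{SC}$ is directionally regular. The three assertions correspond to three earlier propositions, so the work is to check that the translated objects match.

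First I would treat the S-stationarity claim. Assume MPSC-LICQ($d$) holds at $z^*$ in the direction $d\in{\cal C}_{\cal F}(z^*)$. By Definition \ref{definition of MPSC LICQ(d)} and the formula for ${\rm span}\,N_\Lambda(P(z^*);\nabla P(z^*)d)$ computed just before it, MPSC-LICQ($d$) is precisely LICQ($d$) of Definition \ref{Defn3.6} for the reformulated problem \eqref{equivalent-MPSC}. Since ${\cal C}_{\cal F}(z^*)={\cal C}(z^*)$ under the reformulation, Proposition \ref{Thm3.2} applies and gives a (unique) $\zeta$ satisfying the directional S-stationary condition \eqref{directionalS}, i.e. $0=\nabla f(z^*)+\nabla P(z^*)^T\zeta$ with $\zeta\in\widehat N_{T_\Lambda(P(z^*))}(\nabla P(z^*)d)$. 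Using the product rule and the explicit formula for $\widehat N_{T_{\Omega_{SC}}(a)}(d)$ in Lemma \ref{normal cone lema}, writing $\zeta=(\lambda^g,\lambda^h,\lambda^G,\lambda^H)$ yields exactly conditions \eqref{W-stationary1-d}--\eqref{W-stationary4-d} together with $\lambda^G_i=\lambda^H_i=0$ for $i\in\I_{GH}^*(d)$, which is the definition of S-stationarity in direction $d$ (Definition \ref{doptimalitycondition}).

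Next, the M-stationarity and second-order claims. Suppose MPSC quasi-normality holds at $z^*$ in direction $d$. By Definition \ref{definition of MPSC directional normality} this is the ortho-disjunctive quasi-normality of Definition \ref{directional normality}(a) for \eqref{equivalent-MPSC}; hence by Proposition \ref{Bai-Ye-Zhang2019}'s directional version (the proposition right before Proposition \ref{Thm3.1}), $F(z):=P(z)-\Lambda$ is metrically subregular at $(z^*,0)$ in direction $d$. Since $d\in{\cal C}_{\cal F}(z^*)={\cal C}(z^*)$, Proposition \ref{Thm3.1} yields $\zeta$ with $0=\nabla f(z^*)+\nabla P(z^*)^T\zeta$ and $\zeta\in N_\Lambda(P(z^*);\nabla P(z^*)d)$, and, when $f,P$ are twice differentiable, $d^T\nabla^2_z{\cal L}(z^*,\zeta)d\geq0$. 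Decoding $\zeta=(\lambda^g,\lambda^h,\lambda^G,\lambda^H)$ against the explicit formula \eqref{Ncone} for $N_\Lambda(P(z^*);\nabla P(z^*)d)$ gives \eqref{W-stationary1-d}--\eqref{W-stationary4-d} plus $\lambda^G_i\lambda^H_i=0$ for $i\in\I^*_{GH}(d)$, i.e. M-stationarity in direction $d$; and since ${\cal L}$ here is precisely $f(z)+P(z)^T\zeta$ expanded into the $g,h,G,H$ blocks, the second-order inequality is the stated one.

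Finally, the converse (sufficient-optimality) claim. For each $0\neq d\in{\cal C}_{\cal F}(z^*)$ we are given an S-multiplier $(\lambda^g,\lambda^h,\lambda^G,\lambda^H)$ with $d^T\nabla^2_z{\cal L}(z^*,\cdot)d>0$. Re-encoding this as $\zeta\in\widehat N_\Lambda(P(z^*))$ — legitimate because S-stationarity of MPSC in direction $d$ forces $\lambda^G_i=\lambda^H_i=0$ on $\I_{GH}^*(d)$, and Lemma \ref{normal cone lema} shows the remaining sign/support conditions land in $\widehat N_\Lambda(P(z^*))$ — we see the hypotheses of Proposition \ref{prop3.6} are met for \eqref{equivalent-MPSC}. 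That proposition then delivers the quadratic growth condition $f(z)\geq f(z^*)+C\|z-z^*\|^2$ on ${\cal F}\cap N(z^*)$, so $z^*$ is a strict local minimizer of MPSC. The only mild obstacle throughout is bookkeeping: one must verify that the index sets $\I_g^*(d),\I_G^*(d),\I_H^*(d),\I_{GH}^*(d)$ arising from the linearization cone match exactly the ``directional'' refinements appearing inside $N_{\Omega_{SC}}((G_i,H_i);(\nabla G_i d,\nabla H_i d))$ and $\widehat N_{T_{\Omega_{SC}}}$ in Lemma \ref{normal cone lema}, and that the critical cone ${\cal C}_{\cal F}(z^*)$ really coincides with ${\cal C}(z^*)$ of the reformulation — both are immediate from the product structure but need to be spelled out so the cited propositions apply verbatim.
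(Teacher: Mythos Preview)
Your proposal is correct and follows exactly the paper's approach, which simply states that the result follows from Propositions \ref{Thm3.1}--\ref{Thm3.2} (and, for the converse, Proposition \ref{prop3.6}); your write-up merely spells out the translation through the reformulation \eqref{equivalent-MPSC}--\eqref{C-set}. One small clarification on the converse: the hypothesis provides a (nondirectional) S-multiplier in the sense of Definition \ref{WMS}, so $\lambda^G_i=\lambda^H_i=0$ for all $i\in\I_{GH}^*$, not just for $i\in\I_{GH}^*(d)$ --- this is what lands $\zeta$ in $\widehat N_\Lambda(P(z^*))$ and makes Proposition \ref{prop3.6} applicable; directional S-stationarity (vanishing only on $\I_{GH}^*(d)$) would not by itself suffice, since for $i\in\I_G^*(d)\cup\I_H^*(d)\subseteq\I_{GH}^*$ one of $\lambda^G_i,\lambda^H_i$ could be nonzero.
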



For MPEC, Gfrerer \cite{Gfrerer SIAM Optimal 2014} pointed out that the extended M-stationary condition (which means the directional M-stationary condition holds at every critical direction)  is usually hard to verify and introduced the strong M-stationary condition to build a bridge between M-stationarity and S-stationarity. Similarly we can propose a concept of strong M-stationary condition in a critical direction.
In what follows we denote by $r(z^*{;d})$ the rank of the family of gradients
\begin{eqnarray*}\label{family of gradients}
\begin{array}{cc}\{\nabla g_i(z^*)|i\in {\I_g^*(d)}\}\cup\{\nabla h_j(z^*)|j=1,\cdots,q\}\cup\{\nabla G_i(z^*)|i\in\I_G^*\cup \I_G^*(d) \cup \I_{GH}^*(d)\}\\
\cup\{\nabla H_i(z^*)|i\in\I_H^* \cup \I_H^*(d) \cup\I_{GH}^*(d)\}.
\end{array}
\end{eqnarray*}

\begin{defi}\rm
A triple of index sets $(J_g,J_G,J_H)$ with  $J_g\subseteq\I_g^*(d),J_G\subseteq\I_G^*\cup \I_G^*(d) \cup \I_{GH}^*(d),J_H\subseteq\I_H^* \cup \I_H^*(d) \cup\I_{GH}^*(d)$ is called an MPSC working set in direction $d$ for MPSC (\ref{MPSC}), if $J_G\cup J_H=\{1,2,\cdots,m\}$,
$$|J_g|+q+|J_G|+|J_H|=r(z^*{;d}),$$
and the family of gradients
\begin{eqnarray*}
\{\nabla g_i(z^*)|i\in J_g\}\cup\{\nabla h_j(z^*)|j=1,\cdots,q\}\cup\{\nabla G_i(z^*)|i\in J_G\}
\cup\{\nabla H_i(z^*)|i\in J_H\}
\end{eqnarray*}
is linearly independent.

The point $z^*$ is called strongly M-stationary in direction $d$ for  MPSC (\ref{MPSC}), if there exists an MPSC working set $(J_g,J_G,J_H)$ in direction $d$  together with  $\lambda=(\lambda^g,\lambda^h,\lambda^G,\lambda^H)$, an M-multiplier in direction $d$,  satisfying
\begin{eqnarray*}
&&\lambda^g_i=0,i\in\{1,\cdots,p\}\setminus J_g,\label{Strongly M 1}\\
&&\lambda^G_i=0,i\in\{1,\cdots,m\}\setminus J_G,\label{Strongly M 2}\\
&&\lambda^H_i=0,i\in\{1,\cdots,m\}\setminus J_H,\label{Strongly M 3}\\
&&\lambda^G_i=\lambda^H_i=0,i\in J_G\cap J_H.\label{Strongly M 4}
\end{eqnarray*}
\end{defi}

Similarly as in \cite[Theorem 4.3]{Gfrerer SIAM Optimal 2014}, we have the following result.
\begin{thm}
Assume that $z^*$ is M-stationary in direction $d\in {\cal C}_{\cal F}(z^*)$ for MPSC (\ref{MPSC}) and assume that there exists some MPSC working set in direction $d$. Then,  $z^*$ is strongly M-stationary in direction $d$.
\end{thm}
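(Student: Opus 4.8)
The plan is to mimic the proof strategy of \cite[Theorem 4.3]{Gfrerer SIAM Optimal 2014}, which handles the MPEC case, and adapt it to the switching-constraint setting. Starting from the hypothesis that $z^*$ is M-stationary in direction $d$, we have an M-multiplier $(\lambda^g,\lambda^h,\lambda^G,\lambda^H)$ satisfying the equations \eqref{W-stationary1-d}--\eqref{W-stationary4-d} together with $\lambda^G_i\lambda^H_i=0$ for $i\in\I_{GH}^*(d)$. The goal is to produce, from this multiplier, a new M-multiplier in direction $d$ whose support is contained in some MPSC working set $(J_g,J_G,J_H)$ in direction $d$, with the extra property that $\lambda^G_i=\lambda^H_i=0$ on $J_G\cap J_H$. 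The key observation is that the collection of gradients indexed by the ``active'' sets $\I_g^*(d)$, $\{1,\dots,q\}$, $\I_G^*\cup\I_G^*(d)\cup\I_{GH}^*(d)$, $\I_H^*\cup\I_H^*(d)\cup\I_{GH}^*(d)$ has rank $r(z^*;d)$, and by assumption an MPSC working set exists, i.e.\ a maximal linearly independent subfamily that still covers all switching indices ($J_G\cup J_H=\{1,\dots,m\}$) with the right cardinality count.

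First I would take the given M-multiplier and view the stationarity equation \eqref{W-stationary1-d} as expressing $-\nabla f(z^*)$ as a linear combination of the active gradients with the sign/complementarity restrictions from \eqref{W-stationary4-d}. Then I would invoke a Carath\'{e}odory-type reduction: since the whole family of active gradients has rank $r(z^*;d)$, one can rewrite $-\nabla f(z^*)$ as a combination supported on a linearly independent subfamily. The delicate point, exactly as in Gfrerer's argument, is that this reduction must be carried out \emph{preserving the sign constraints} on the $\lambda^g_i$ ($i\in\I_g^*(d)$) and the \emph{structural zero constraints} $\lambda^G_i=0$ on $\I_H^*\cup\I_H^*(d)$, $\lambda^H_i=0$ on $\I_G^*\cup\I_G^*(d)$, and the complementarity $\lambda^G_i\lambda^H_i=0$ on $\I_{GH}^*(d)$. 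The standard trick is: if the support of the current multiplier (restricted to the active gradients) is linearly dependent, pick a nontrivial vanishing linear combination of those gradients, and move along the line $\lambda + s\,\nu$ where $\nu$ is the corresponding multiplier-direction; choosing the sign of $s$ appropriately and taking $s$ to be the first value at which some component hits zero, one strictly reduces the support while keeping the nonnegativity of the inequality multipliers intact, and—crucially—if a component $\lambda^G_i$ or $\lambda^H_i$ with $i\in\I_{GH}^*(d)$ is zero it stays zero (so complementarity is preserved because at most one of $\lambda^G_i,\lambda^H_i$ was nonzero to begin with). Iterating until the support is linearly independent yields a multiplier supported on a linearly independent active subfamily.

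Next I would argue that this independent support can be \emph{extended} to an MPSC working set. By definition the working set must satisfy $J_G\cup J_H=\{1,\dots,m\}$ and $|J_g|+q+|J_G|+|J_H|=r(z^*;d)$ with the listed gradients independent. Because an MPSC working set is assumed to exist, and because any independent subfamily of the active gradients can be completed to a maximal independent one, I would take a maximal independent extension of the reduced support inside the active gradient family; then verify that this extension can be chosen so that the switching-index covering condition $J_G\cup J_H=\{1,\dots,m\}$ holds. Here I would use the existence hypothesis: the given working set tells us $r(z^*;d)$ is large enough to accommodate a covering choice, and any maximal independent family has exactly $r(z^*;d)$ members. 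The support of our reduced multiplier sits inside some such maximal family, and setting the remaining multiplier components to zero (on indices not in the chosen $J_g,J_G,J_H$, and on $J_G\cap J_H$) is automatic since those components were already zero after the reduction. This yields the required $\lambda^g_i=0$ off $J_g$, $\lambda^G_i=0$ off $J_G$, $\lambda^H_i=0$ off $J_H$, and $\lambda^G_i=\lambda^H_i=0$ on $J_G\cap J_H$.

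The main obstacle I anticipate is the bookkeeping that guarantees the reduction and extension can be done \emph{simultaneously} respecting both the covering requirement $J_G\cup J_H=\{1,\dots,m\}$ and the independence/cardinality count, i.e.\ showing that shrinking the support never forces us out of the class of families that can be completed to an MPSC working set. In the MPEC case Gfrerer handles this by carefully distinguishing the biactive indices and noting that for each such index at least one of the two gradients must appear; the analogous statement here is that for every $i\in\I_{GH}^*(d)$, since $\lambda^G_i\lambda^H_i=0$, the support uses at most one of $\nabla G_i(z^*),\nabla H_i(z^*)$, and the working-set covering $J_G\cup J_H=\{1,\dots,m\}$ then lets us freely assign the ``missing'' one to whichever of $J_G,J_H$ keeps independence. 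Making this assignment consistent across all biactive indices—so that the final family is exactly of size $r(z^*;d)$ and independent—is the technical heart of the proof; everything else is the routine Carath\'{e}odory-style support reduction already established in \cite{Gfrerer SIAM Optimal 2014}.
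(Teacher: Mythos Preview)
Your proposal is correct and matches the paper's approach exactly: the paper does not spell out a proof of this theorem but simply states that it follows ``similarly as in \cite[Theorem 4.3]{Gfrerer SIAM Optimal 2014},'' and your plan is precisely to adapt Gfrerer's Carath\'eodory-type support-reduction argument to the switching-constraint setting. Your identification of the one nontrivial point---ensuring that the reduced linearly independent support can be completed to a family satisfying the covering condition $J_G\cup J_H=\{1,\dots,m\}$ together with the cardinality and independence requirements---is accurate, and this is exactly the part of Gfrerer's argument that needs to be redone with the MPSC index structure $\I_G^*,\I_H^*,\I_G^*(d),\I_H^*(d),\I_{GH}^*(d)$ in place of the MPEC biactive sets.
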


\begin{thm}
Let $z^*$ be feasible for MPSC (\ref{MPSC}) and assume that MPSC-LICQ{(d)} is fulfilled at $z^*$. Then $z^*$ is strongly M-stationary in direction $d$ if and only if it is S-stationary in direction $d$.
\end{thm}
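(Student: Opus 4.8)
The plan is to prove the two implications separately, the common idea being that \emph{under MPSC-LICQ($d$) the only MPSC working set in direction $d$ is the ``full'' triple}
$$J_g^{\rm full}:=\I_g^*(d),\qquad J_G^{\rm full}:=\I_G^*\cup\I_G^*(d)\cup\I_{GH}^*(d),\qquad J_H^{\rm full}:=\I_H^*\cup\I_H^*(d)\cup\I_{GH}^*(d).$$
First I would record the purely set-theoretic facts used throughout: $\I_G^*,\I_H^*,\I_{GH}^*$ partition $\{1,\dots,m\}$ while $\I_G^*(d),\I_H^*(d),\I_{GH}^*(d)$ partition $\I_{GH}^*$, so by disjointness $J_G^{\rm full}\cup J_H^{\rm full}=\{1,\dots,m\}$ and $J_G^{\rm full}\cap J_H^{\rm full}=\I_{GH}^*(d)$; moreover, since MPSC-LICQ($d$) says that the whole family of gradients entering the definition of $r(z^*;d)$ is linearly independent, we have $r(z^*;d)=|\I_g^*(d)|+q+|J_G^{\rm full}|+|J_H^{\rm full}|$.

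For ``S-stationary in direction $d$ $\Rightarrow$ strongly M-stationary in direction $d$'', I would take an S-multiplier $(\lambda^g,\lambda^h,\lambda^G,\lambda^H)$ realizing S-stationarity in direction $d$ (extended by $\lambda^g_i:=0$ for $i\notin\I_g^*(d)$) and check first that $(J_g^{\rm full},J_G^{\rm full},J_H^{\rm full})$ is an MPSC working set in direction $d$: the union clause $J_G^{\rm full}\cup J_H^{\rm full}=\{1,\dots,m\}$ and the cardinality clause $|J_g^{\rm full}|+q+|J_G^{\rm full}|+|J_H^{\rm full}|=r(z^*;d)$ are the two facts above, and linear independence of the listed gradients is exactly MPSC-LICQ($d$). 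Then I would verify, clause by clause, the four multiplier conditions of strong M-stationarity: $\lambda^g_i=0$ off $J_g^{\rm full}=\I_g^*(d)$ holds by the extension; $\lambda^G_i=0$ off $J_G^{\rm full}$ (i.e.\ on $\I_H^*\cup\I_H^*(d)$) and $\lambda^H_i=0$ off $J_H^{\rm full}$ (i.e.\ on $\I_G^*\cup\I_G^*(d)$) hold by (\ref{W-stationary4-d}); and $\lambda^G_i=\lambda^H_i=0$ on $J_G^{\rm full}\cap J_H^{\rm full}=\I_{GH}^*(d)$ is precisely the extra clause of S-stationarity in direction $d$. Since that clause also forces $\lambda^G_i\lambda^H_i=0$ on $\I_{GH}^*(d)$, the multiplier is an M-multiplier in direction $d$, so strong M-stationarity holds. (Alternatively, one notes S-stationarity in direction $d$ implies M-stationarity in direction $d$ and, having exhibited a working set, invokes the earlier result that an M-stationary point admitting an MPSC working set in direction $d$ is strongly M-stationary in direction $d$.)

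For the converse, let $(J_g,J_G,J_H)$ be a working set realizing strong M-stationarity in direction $d$ with accompanying M-multiplier $(\lambda^g,\lambda^h,\lambda^G,\lambda^H)$. By the definition of a working set, $J_g\subseteq\I_g^*(d)$, $J_G\subseteq J_G^{\rm full}$, $J_H\subseteq J_H^{\rm full}$, and $|J_g|+q+|J_G|+|J_H|=r(z^*;d)$; comparing with the value of $r(z^*;d)$ recorded above and noting that each of the three inclusions can only decrease cardinality, all three must be equalities, so $(J_g,J_G,J_H)=(J_g^{\rm full},J_G^{\rm full},J_H^{\rm full})$ and hence $J_G\cap J_H=\I_{GH}^*(d)$. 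The strong M-stationarity clause $\lambda^G_i=\lambda^H_i=0$ for $i\in J_G\cap J_H$ then reads $\lambda^G_i=\lambda^H_i=0$ for $i\in\I_{GH}^*(d)$, which together with (\ref{W-stationary1-d})--(\ref{W-stationary4-d}) carried by the M-multiplier is exactly S-stationarity in direction $d$ (Definition \ref{doptimalitycondition}). The whole argument is bookkeeping with the partition structure of the active index sets plus one rank count; the only delicate step --- and the one place where MPSC-LICQ($d$) is essential --- is the forcing of maximality of $J_g,J_G,J_H$ from the equality of cardinality sums, so I anticipate no real obstacle beyond getting the index-set identities right.
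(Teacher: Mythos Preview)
Your proposal is correct and follows the same approach as the paper: the paper's one-line proof simply asserts that under MPSC-LICQ($d$) there is exactly one MPSC working set in direction $d$, namely the full triple $(J_g^{\rm full},J_G^{\rm full},J_H^{\rm full})$, from which the equivalence is immediate. Your write-up supplies the partition/cardinality bookkeeping behind that assertion (including the uniqueness via the rank count) and spells out both implications, but the underlying idea is identical.
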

\begin{proof}
The statement follows immediately from the fact that under MPSC-LICQ{({$d$})} there exists exactly one MPSC working set and this set fulfills $J_g=\I_g^*(d),J_G=\I_G^*\cup \I_G^*(d)\cup \I_{GH}^*(d),J_H=\I_H^*\cup\I_H^*(d)\cup \I_{GH}^*(d)$.
\end{proof}

In \cite[Example 5.2]{Mehlitz MP}, it was shown that the optimal solution of the following problem is M-stationary but not S-stationary. But we can  show that MPSC-LICQ({$d$}) holds at $z^*$ and $z^*$  is S-stationary in any nonzero critical direction.
\begin{ex}{\rm\cite[Example 5.2]{Mehlitz MP}}
Consider the following optimization problem
\begin{eqnarray*}
\min&&z_1+z_2^2\\
{\rm s.t.}&&-z_1+z_2\leq0,\quad z_1z_2=0.
\end{eqnarray*}
Its unique global minimizer is given by $z^*:=(0,0)$. The linearization cone and critical cone  of this problem at $z^*$ are given by
\begin{eqnarray*}
L^{\rm lin}_\F(z^*)&=&\{d\in\mathbb{R}^2|-d_1+d_2\leq0,d_1d_2=0\},\\
\mathcal{C}_{\cal F}(z^*)&=&\{d\in\mathbb{R}^2|-d_1+d_2\leq0,d_1d_2=0,d_1\leq0\}=\{d\in\mathbb{R}^2|d_1=0,d_2\leq0\}.
\end{eqnarray*}
Define $g(z):=-z_1+z_2, {G(z):=z_1, H(z):=z_2}$. Let $0\neq d\in\mathcal{C}_\F(z^*)$, then $\I_g^*(d),\I_H^*(d),\I_{GH}^*(d)$ are all empty but the index set $\I_G^*(d)=\{1\}$.
Hence MPSC-LICQ(d) holds at $z^*$. It is easy to check that $z^*$ is indeed S-stationary in any direction $0\neq d\in\mathcal{C}_{\cal F}(z^*)$.
\end{ex}

The strong M-stationarity in direction $d$ builds a bridge between M-stationarity in direction $d$ and S-stationarity in direction $d$. We summarize the relations among the various stationarity concepts in Figure 2.
%
%
%
\begin{figure}%
\centering
		\scriptsize
		 \tikzstyle{format}=[rectangle,draw,thin,fill=white]
		 \tikzstyle{test}=[diamond,aspect=2,draw,thin]
		\tikzstyle{point}=[coordinate,on grid,]
		\begin{tikzpicture}
        \node[format](S){S-stationary};
        \node[format,below of=S,node distance=10mm](S direction){S-stationary in direction $d$};
        \node[format,below of=S direction,node distance=7mm](Strong M direction){strongly M-stationary in direction $d$};
		\node[format,below of=Strong M direction,node distance=7mm](M direction){M-stationary in direction $d$};
       \node[format,left of=S,node distance=35mm](QM){$\mathcal{Q}_M$-stationary};
       \node[format,below of=QM,node distance=14mm](Q){$\mathcal{Q}$-stationary};
       \node[format,below of=Q,node distance=10mm](M-stationary){M-stationary};
       \node[format,below of=M-stationary,node distance=10mm](linearized){linearized M-stationary};
       \node[format,right of=linearized,node distance=35mm](AM-stationary){AM-stationary};
\draw[->](S)--(S direction);
\draw[->](S direction)--(Strong M direction);
\draw[->](Strong M direction)--(M direction);
\draw[->](M direction)--(M-stationary);
\draw[->](S)--(QM);
\draw[->](QM)--(Q);
\draw[->](Q)--(QM);
\draw[->](Q)--(M-stationary);
\draw[->](linearized)--(M-stationary);
\draw[->](M-stationary)--(linearized);
\draw[->] (M-stationary)--(AM-stationary);
\end{tikzpicture}

\centering{Fig.2  Relation among stationarities}
\end{figure}
%

\section{Error bound and exact penalty for MPSC}
In this section we show the error bound property under two types of constraint qualifications: one is based on the local decomposition approach and the other is based on the directional quasi-normality.

First we discuss the local decomposition approach. Let $\mathcal{P}(\I_{GH}^*)$ be the set of all (disjoint) bipartitions of $\I_{GH}^*$. For fixed $(\beta_1,\beta_2)\in\mathcal{P}(\I_{GH}^*)$, define
\begin{eqnarray*}
{\rm NLP(\beta_1,\beta_2)}\ \min &&f(z)\\
{{\rm s.t.}}&&g(z)\leq 0,
h(z)=0,G_i(z)=0,\ i\in\I_G^*\cup\beta_1,H_i(z)=0,\ i\in\I_H^*\cup\beta_2.
\end{eqnarray*}
\begin{defi}\rm\label{New CQ definition}
Let $z^*$ be a feasible point of MPSC (\ref{MPSC}). We say that $z^*$  satisfies
\begin{itemize}
\item MPSC piecewise MFCQ/CRCQ/CPLD/RCRCQ/RCPLD/CRSC, if for each $(\beta_1,\beta_2)$ $\in\mathcal{P}(\I_{GH}^*)$, MFCQ/CRCQ/CPLD/RCRCQ/RCPLD/CRSC holds for $({\rm NLP(\beta_1,\beta_2)})$ at $z^*$.
\end{itemize}
\end{defi}

We now compare the piecewise constraint qualifications just defined with  MPSC-MFCQ/-CRCQ/-CPLD as defined in subsection \ref{s2.2}.
{It is easy to see that if  MFCQ/CRCQ/CPLD holds for (TNLP) at $z^*$ then for any $(\beta_1,\beta_2)\in\mathcal{P}(\I_{GH}(z^*))$, MFCQ/CRCQ/CPLD holds for $({\rm NLP(\beta_1,\beta_2)})$ at $z^*$. Hence MPSC-MFCQ/-CRCQ/-CPLD implies MPSC piecewise MFCQ/CRCQ/CPLD.}

{MPSC piecewise MFCQ/CRCQ/CPLD does not imply MPSC-MFCQ/-CRCQ/-CPLD. For example, consider MPSC with constraint system $G(z)=-z_1,H(z)=z_1-z_1^2z_2^2$ at $z^*=(0,0)$. $\nabla G(z)=(-1,0)^T,\nabla H(z)=(1-2z_1z_2^2,-2z_1^2z_2)$. For (TNLP), CPLD does not hold at $z^*$, but for $({\rm NLP(\beta_1,\beta_2)})$, LICQ holds at $z^*$, then MFCQ/CRCQ/CPLD holds at $z^*$.}
 This counter example shows that MPSC piecewise MFCQ/CRCQ/CPLD is strictly weaker than MPSC-MFCQ/-CRCQ/-CPLD.

 Since piecewise constraint qualifications are required to hold for all pieces,  {they} may be harder to verify than the non-piecewise version.
{However sometimes, these two concepts may be equivalent. For example, it was shown in \cite{MengweiYe} that MPSC piecewise RCPLD is equivalent to MPSC-RCPLD.}

In Theorem \ref{CRSCerrorb} we will show that MPSC piecewise CRSC which is the weakest one among all the piecewise constraint qualifications introduced will imply the error bound property.
For this purpose, we first give the following  definition for local error bound property of MPSC (\ref{MPSC}).

\begin{defi}\rm
We say that {\em MPSC local error bound} holds around  $z^*\in\F$   if there exists a neighborhood $V(z^*)$ of $z^*$ and $\alpha>0$ such that
\begin{eqnarray*}
{\rm dist}_\F(z)\leq\alpha\left(\sum_{i=1}^p\max\{g_i(z),0\}
+\sum_{j=1}^q|h_j(z)|+\sum_{i=1}^m\min\{|G_i(z)|,|H_i(z)|\}\right) \ \forall z\in V(z^*).
\end{eqnarray*}
\end{defi}
\begin{thm}\label{CRSCerrorb}
If $z^*\in\F$ verifies MPSC piecewise CRSC, then MPSC local error bound holds in a neighborhood of $z^*$.
\end{thm}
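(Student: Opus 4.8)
The plan is to reduce the MPSC error bound to the NLP error bounds for the branch problems $\mathrm{NLP}(\beta_1,\beta_2)$, using the observation that near $z^*$ the feasible set $\F$ of the MPSC is, locally, the union of the finitely many feasible sets $\F_{\beta_1,\beta_2}$ of $\mathrm{NLP}(\beta_1,\beta_2)$ over $(\beta_1,\beta_2)\in\mathcal{P}(\I_{GH}^*)$. First I would fix a neighborhood $V(z^*)$ small enough that for all $z\in V(z^*)$ the sign pattern of the non-critical data is frozen: $g_i(z)<0$ for $i\notin\I_g^*$, $G_i(z)\neq 0$ for $i\in\I_H^*$, and $H_i(z)\neq 0$ for $i\in\I_G^*$. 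On such a neighborhood, a point $z$ lies in $\F$ if and only if $g(z)\le 0$, $h(z)=0$, $G_i(z)=0$ for $i\in\I_G^*$, $H_i(z)=0$ for $i\in\I_H^*$, and for each $i\in\I_{GH}^*$ either $G_i(z)=0$ or $H_i(z)=0$; splitting $\I_{GH}^*$ according to which factor vanishes shows $\F\cap V(z^*)=\bigcup_{(\beta_1,\beta_2)\in\mathcal{P}(\I_{GH}^*)}\F_{\beta_1,\beta_2}\cap V(z^*)$, and hence $\mathrm{dist}_\F(z)=\min_{(\beta_1,\beta_2)}\mathrm{dist}_{\F_{\beta_1,\beta_2}}(z)$ for $z$ close enough to $z^*$ (the $\min$ is legitimate since each branch feasible set is closed and contains $z^*$).

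Next I would invoke the hypothesis: MPSC piecewise CRSC means that for every $(\beta_1,\beta_2)$, CRSC holds for $\mathrm{NLP}(\beta_1,\beta_2)$ at $z^*$. By the NLP result recalled in Section 2 (Andreani et al.\ \cite{Andreani etal 2012}, with the smoothness hypothesis removed by Guo et al.\ \cite{Guo-Zhang-Lin2014}; see also Figure 1), CRSC implies the NLP local error bound. So for each $(\beta_1,\beta_2)$ there are $\alpha_{\beta_1,\beta_2}>0$ and a neighborhood $V_{\beta_1,\beta_2}(z^*)$ with
\[
\mathrm{dist}_{\F_{\beta_1,\beta_2}}(z)\le \alpha_{\beta_1,\beta_2}\Big(\sum_{i=1}^p\max\{g_i(z),0\}+\sum_{j=1}^q|h_j(z)|+\sum_{i\in\I_G^*\cup\beta_1}|G_i(z)|+\sum_{i\in\I_H^*\cup\beta_2}|H_i(z)|\Big)
\]
for $z\in V_{\beta_1,\beta_2}(z^*)$. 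Shrinking to a common neighborhood $V(z^*)$ and taking $\alpha_0=\max_{(\beta_1,\beta_2)}\alpha_{\beta_1,\beta_2}$ gives a uniform constant.

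The remaining step is to bound the right-hand side of the branch estimate by the MPSC residual $r(z):=\sum_i\max\{g_i(z),0\}+\sum_j|h_j(z)|+\sum_i\min\{|G_i(z)|,|H_i(z)|\}$, for the particular branch that realizes the minimum of $\mathrm{dist}_{\F_{\beta_1,\beta_2}}(z)$. For $i\in\I_G^*$ we have $H_i(z)$ bounded away from $0$ on $V(z^*)$, so $|G_i(z)|=\min\{|G_i(z)|,|H_i(z)|\}\le r(z)$ once $V(z^*)$ is small enough; symmetrically for $i\in\I_H^*$. For $i\in\I_{GH}^*$, the issue is that $\min\{|G_i(z)|,|H_i(z)|\}$ controls only the smaller of the two factors, whereas the branch estimate for a fixed $(\beta_1,\beta_2)$ asks for control of $|G_i(z)|$ when $i\in\beta_1$ and $|H_i(z)|$ when $i\in\beta_2$ — so I would choose, for each $z$, the bipartition $(\beta_1(z),\beta_2(z))$ that puts $i$ into $\beta_1(z)$ exactly when $|G_i(z)|\le|H_i(z)|$; then $\sum_{i\in\beta_1(z)}|G_i(z)|+\sum_{i\in\beta_2(z)}|H_i(z)|=\sum_{i\in\I_{GH}^*}\min\{|G_i(z)|,|H_i(z)|\}\le r(z)$. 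Using $\mathrm{dist}_\F(z)\le\mathrm{dist}_{\F_{\beta_1(z),\beta_2(z)}}(z)$ and the branch error bound for that data-dependent partition yields $\mathrm{dist}_\F(z)\le\alpha\, r(z)$ with $\alpha=\alpha_0\cdot(\text{a constant depending only on } p,q,m)$.

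The main obstacle is the interaction between the data-dependent choice of bipartition and the validity of the branch error bounds: the NLP error bound constants $\alpha_{\beta_1,\beta_2}$ and neighborhoods $V_{\beta_1,\beta_2}$ are finitely many, so one can intersect neighborhoods and maximize constants, but one must be careful that the bipartition chosen for $z$ is allowed to vary with $z$ while still landing in the finite family $\mathcal{P}(\I_{GH}^*)$ — this is fine precisely because $\mathcal{P}(\I_{GH}^*)$ is finite and each branch estimate holds on a fixed neighborhood independent of $z$. A secondary point to handle carefully is the local decomposition $\F\cap V(z^*)=\bigcup_{(\beta_1,\beta_2)}\F_{\beta_1,\beta_2}\cap V(z^*)$, which requires the frozen-sign choice of $V(z^*)$ and the fact (from \cite{Mehlitz MP}) that the inclusion $\F\subseteq\bigcup\F_{\beta_1,\beta_2}$ is an equality locally; everything else is routine manipulation of the residual terms.
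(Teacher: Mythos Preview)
Your proposal is correct and follows essentially the same route as the paper's proof: invoke the NLP error bound from \cite{Guo-Zhang-Lin2014} for each branch $\mathrm{NLP}(\beta_1,\beta_2)$, take the maximum constant over the finite family $\mathcal{P}(\I_{GH}^*)$ and intersect neighborhoods, then for each $z$ select the data-dependent bipartition $(\beta_1(z),\beta_2(z))$ that picks out the smaller factor on $\I_{GH}^*$. The only cosmetic difference is that your local decomposition $\F\cap V(z^*)=\bigcup_{(\beta_1,\beta_2)}\F_{\beta_1,\beta_2}\cap V(z^*)$ is not actually needed---since $\F_{\beta_1,\beta_2}\subseteq\F$ holds globally, the inequality $\mathrm{dist}_\F(z)\le\mathrm{dist}_{\F_{\beta_1(z),\beta_2(z)}}(z)$ is immediate, and the extra multiplicative constant you mention at the end is in fact $1$.
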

\begin{proof}
Recall that the definition of MPSC piecewise CRSC means that for any $(\beta_1,\beta_2)\in\mathcal{P}(\I_{GH}^*)$,  CRSC holds for nonlinear programs $({\rm NLP(\beta_1,\beta_2)})$ at $z^*$. {When $i\in \I_G^*$, $|H_i(z^*)|>|G_i(z^*)|=0$, there exists a neighborhood $V_G(z^*)$ of $z^*$ such that $|H_i(z)|\geq|G_i(z)|$, then we have $\min\{|G_i(z)|,|H_i(z)|\}=|G_i(z)|$, for $i\in\I_G^*$ and $z\in V_G(z^*)$. Similarly,  there exists a neighborhood $V_H(z^*)$ of $z^*$ such that $\min\{|G_i(z)|,|H_i(z)|\}=|H_i(z)|$, for $i\in\I_H^*$ and $z\in V_H(z^*)$.}  Thus by \cite[Corollary 4.1]{Guo-Zhang-Lin2014} we have  that for $(\beta_1,\beta_2)\in \mathcal{P}(\I_{GH}^*)$, there exist a neighborhood $V_{\beta_1,\beta_2}(z^*)$ and a constant $\alpha_{\beta_1,\beta_2}$ such that
\begin{eqnarray*}
{\rm dist}_\F(z)&\leq&\alpha_{\beta_1,\beta_2} \left (\sum_{i=1}^p\max\{g_i(z),0\}
+\sum_{j=1}^q|h_j(z)|+\sum_{i\in\I_G^*\cup \beta_1}|G_i(z)|+\sum_{i\in\I_H^*\cup \beta_2}|H_i(z)|\right )
\\
&=&\alpha_{\beta_1,\beta_2}\left (\sum_{i=1}^p\max\{g_i(z),0\}
+\sum_{j=1}^q|h_j(z)|+\sum_{i\in\I_G^*}\min\{|G_i(z)|,|H_i(z)|\} \right. \\
&&\qquad\qquad\qquad\left .+\sum_{i\in\I_H^*}\min\{|G_i(z)|,|H_i(z)|\}
+\sum_{i\in\beta_1}|G_i(z)|+\sum_{i\in\beta_2}|H_i(z)|\right ),
\end{eqnarray*}
for all $z\in V_{\beta_1,\beta_2}(z^*)$. Taking $\alpha:=\max_{(\beta_1,\beta_2)\in\mathcal{P}(\I_{GH}^*)}\alpha_{\beta_1,\beta_2}$, $V(z^*):=\cap_{(\beta_1,\beta_2)\in\mathcal{P}(\I_{GH}^*)} V_{\beta_1,\beta_2}(z^*)$, we get for all $z\in V(z^*)$
\begin{eqnarray*}{\rm dist}_\F(z)\leq\alpha\left(\sum_{i=1}^p\max\{g_i(z),0\}
+\sum_{j=1}^q|h_j(z)|+\sum_{i\in\I_G^*}\min\{|G_i(z)|,|H_i(z)|\}\right.\\
\left.+\sum_{i\in\I_H^*}\min\{|G_i(z)|,|H_i(z)|\}
+\sum_{i\in\beta_1}|G_i(z)|+\sum_{i\in\beta_2}|H_i(z)|\right).
\end{eqnarray*}
Finally, it holds for all $(\beta_1,\beta_2)\in \mathcal{P}(\I_{GH}^*)$.
{Set $$\beta_1^*(z):=\{i\in\I^*_{GH}||G_i(z)|=\min\{|G_i(z)|,|H_i(z)|\}\},\quad  \beta_2^*(z):=\I^*_{GH}\setminus\beta_1^*(z),$$ then $(\beta_1^*(z),\beta_2^*(z))\in \mathcal{P}(\I_{GH}^*)$.}

Then we have
\begin{eqnarray*}{\rm dist}_\F(z)\leq\alpha\left(\sum_{i=1}^p\max\{g_i(z),0\}
+\sum_{j=1}^q|h_j(z)|+\sum_{i\in\I_G^*}\min\{|G_i(z)|,|H_i(z)|\}\right.\\
+\sum_{i\in\I_H^*}\min\{|G_i(z)|,|H_i(z)|\}
+\sum_{i\in\beta_1^*(z)}\min\{|G_i(z)|,|H_i(z)|\}\quad\\
\left.+\sum_{i\in\beta_2^*(z)}\min\{|G_i(z)|,|H_i(z)|\}\right)\\
=\alpha\left(\sum_{i=1}^p\max\{g_i(z),0\}
+\sum_{j=1}^q|h_j(z)|+\sum_{i=1}^m\min\{|G_i(z)|,|H_i(z)|\}\right).
\end{eqnarray*}
This completes the proof.
\end{proof}



 Now we discuss the second approach based on the directional quasi-normality. First we need the following calculation.
\begin{lema}\label{lema-distance function}
Under the $l_1$-norm, the distance functions are given by the following expressions for $a,b\in\mathbb{R}:$
\begin{eqnarray*}
&&{\rm dist}_{(-\infty,0]}(a)=\max\{a,0\},\quad {\rm dist}_{\{0\}}(a)=|a|,\\
&&{\rm dist}_{\Omega_{SC}}((a,b))=\min\{|a|,|b|\}=\left(\begin{array}{ll}
a\ or\ b & a=b\geq0,\\
b\quad &|a|>b\geq0,\\
-b\quad &|a|>-b\geq0,\\
a\quad &|b|>a\geq0,\\
-a&|b|>-a\geq0,\\
-a\ or\ -b &a=b\leq0.
\end{array}\right.
\end{eqnarray*}
\end{lema}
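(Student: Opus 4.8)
The statement to prove is Lemma~\ref{lema-distance function}, which computes three distance functions under the $\ell_1$-norm: $\mathrm{dist}_{(-\infty,0]}(a)=\max\{a,0\}$, $\mathrm{dist}_{\{0\}}(a)=|a|$, and $\mathrm{dist}_{\Omega_{SC}}((a,b))=\min\{|a|,|b|\}$, with the latter spelled out by cases. The plan is to verify each formula directly from the definition $\mathrm{dist}_C(x)=\inf_{y\in C}\|x-y\|$, exploiting that in $\mathbb{R}$ the $\ell_1$-norm is just the absolute value and in $\mathbb{R}^2$ it splits as $\|(u,v)\|_1=|u|+|v|$.

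First I would dispatch the two one-dimensional cases, which are elementary: for $C=(-\infty,0]$, if $a\le 0$ then $a\in C$ so the distance is $0=\max\{a,0\}$, and if $a>0$ the nearest point of $C$ is $0$, giving distance $a=\max\{a,0\}$; for $C=\{0\}$ the distance is trivially $|a-0|=|a|$. Then for the main case, $\mathrm{dist}_{\Omega_{SC}}((a,b))=\inf\{|a-u|+|b-v| : uv=0\}$. Since $\Omega_{SC}=(\mathbb{R}\times\{0\})\cup(\{0\}\times\mathbb{R})$ is a union of two lines, I would write the infimum as the minimum of the distances to each line: $\mathrm{dist}_{\mathbb{R}\times\{0\}}((a,b))=|b|$ (take $u=a$, $v=0$) and $\mathrm{dist}_{\{0\}\times\mathbb{R}}((a,b))=|a|$ (take $u=0$, $v=b$), hence $\mathrm{dist}_{\Omega_{SC}}((a,b))=\min\{|a|,|b|\}$. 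The case table at the end is then just a restatement of $\min\{|a|,|b|\}$ together with a record of which coordinate achieves the minimum and the sign bookkeeping needed to express the value with signs removed (e.g. when $|a|>b\ge 0$ the minimum is $b=|b|$, when $|a|>-b\ge 0$ it is $-b=|b|$, and so on); I would verify one or two representative rows and note the rest are symmetric.

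I do not anticipate a genuine obstacle here — this is a routine computation — but the one point requiring a little care is justifying that the distance to a union of closed sets equals the minimum of the individual distances, which holds because $\inf_{y\in C_1\cup C_2}\|x-y\|=\min\{\inf_{y\in C_1}\|x-y\|,\inf_{y\in C_2}\|x-y\|\}$, and that each of these two infima over a line is attained and equals the stated coordinate absolute value (an orthogonal-projection-type argument, trivial for an axis line under any monotone norm). The secondary bit of care is in the case table: one must check the boundary overlaps (e.g. $a=b\ge 0$, where either $a$ or $b$ may be reported as the value, consistent with $\min\{|a|,|b|\}=a=b$) are handled consistently, but these are immediate once $\min\{|a|,|b|\}$ is established.
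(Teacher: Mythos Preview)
Your argument is correct and complete: the two one-dimensional formulas are immediate, and for $\Omega_{SC}$ writing it as the union of the two coordinate axes and taking the minimum of the $\ell_1$-distances to each axis gives $\min\{|a|,|b|\}$; the case table is then just sign bookkeeping. The paper itself states this lemma without proof, treating it as a straightforward calculation, so your direct verification is exactly what is intended.
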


\begin{thm}
Let $z^*\in\F$  such that  MPSC directional quasi-normality  holds. Then
 MPSC local error bound holds in a neighborhood of $z^*$.
\end{thm}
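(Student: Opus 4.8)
### Proof proposal

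The plan is to reduce the MPSC local error bound to the directional metric subregularity of the feasibility map $F(z) := P(z) - \Lambda$ at $(z^*,0)$, where $P$ and $\Lambda$ are as in \eqref{definingfunctions}, and then invoke the results from Section \ref{sec3}. First I would recall the key fact, stated just before Theorem \ref{CRSCerrorb} in the NLP setting but equally valid here, that the MPSC local error bound around $z^*$ is \emph{equivalent} to metric subregularity of $F$ at $(z^*,0)$ in the ordinary (non-directional) sense; this uses Lemma \ref{lema-distance function} to identify $\mathrm{dist}_\Lambda(P(z))$ with the residual $\sum_i \max\{g_i(z),0\} + \sum_j |h_j(z)| + \sum_i \min\{|G_i(z)|,|H_i(z)|\}$ under the $l_1$-norm, together with the elementary equivalence between distance-to-feasible-set bounds and metric subregularity.

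Next I would use the standard reduction from non-directional to directional metric subregularity: $F$ is metrically subregular at $(z^*,0)$ if and only if it is metrically subregular at $(z^*,0)$ in every direction $d$, and moreover directions outside the linearization cone $L^{\mathrm{lin}}_{\mathcal F}(z^*)$ are automatically fine (for $d \notin L^{\mathrm{lin}}_{\mathcal F}(z^*)$ one has $\nabla P(z^*)d \notin T_\Lambda(P(z^*))$, so a directional neighborhood $V_{\rho,\delta}(d)$ with $\rho,\delta$ small contains no feasible-direction trouble and the estimate holds trivially; also $d=0$ reduces to the non-directional notion). Hence it suffices to verify directional metric subregularity of $F$ at $(z^*,0)$ in every direction $d \in L^{\mathrm{lin}}_{\mathcal F}(z^*)$. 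For a fixed such $d$, the hypothesis of MPSC directional quasi-normality gives precisely that the MPSC quasi-normality condition holds at $z^*$ in direction $d$, which (via the rewriting in Definition \ref{definition of MPSC directional normality}, and the identification of MPSC as an ortho-disjunctive program) is exactly the ortho-disjunctive quasi-normality in direction $d$ from Definition \ref{directional normality}(a). By the proposition following Definition \ref{directional normality} (the directional version of \cite[Corollary 4.1]{Bai-Ye-Zhang2019}), quasi-normality at $z^*$ in direction $d$ implies that $F$ is metrically subregular at $(z^*,0)$ in direction $d$.

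Putting these together: $F$ is metrically subregular at $(z^*,0)$ in every direction, hence metrically subregular at $(z^*,0)$, hence the MPSC local error bound holds in a neighborhood of $z^*$. I expect the main obstacle — or at least the only nontrivial bookkeeping — to be the careful translation step: checking that MPSC directional quasi-normality as stated in Definition \ref{definition of MPSC directional normality} literally coincides, component by component (the inequality block $\mathbb{R}^p_-$, the equality block $\{0\}^q$, and each switching factor $\Omega_{SC}$), with the ortho-disjunctive quasi-normality of Definition \ref{directional normality}(a) applied to $P$ and $\Lambda$ of \eqref{definingfunctions}. This is where the directional normal cone formula \eqref{Ncone} for $N_\Lambda(P(z^*);\nabla P(z^*)d)$ from Lemma \ref{normal cone lema} and the sign/support conditions in part (ii) of Definition \ref{definition of MPSC directional normality} must be matched against the $\zeta \in N_\Lambda(P(z^*);\nabla P(z^*)d)$ and the sign conditions $\zeta_i(P_i(z^*+t_kd^k) - P_i(z^*)) > 0$; once that identification is in place, the rest is a direct chaining of already-established propositions. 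A secondary routine point is confirming the $l_1$-residual/$\mathrm{dist}_\Lambda$ identification through Lemma \ref{lema-distance function}, and noting that switching from the $l_1$-norm to an arbitrary norm only changes the error-bound constant $\alpha$.
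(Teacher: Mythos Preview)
Your proposal is correct and follows essentially the same route as the paper: directional quasi-normality $\Rightarrow$ metric subregularity of $F(z)=P(z)-\Lambda$ at $(z^*,0)$ via \cite[Corollary 4.1]{Bai-Ye-Zhang2019}, and then metric subregularity $\Rightarrow$ MPSC local error bound via the distance identification in Lemma \ref{lema-distance function}. The paper's proof is more direct in that it invokes \cite[Corollary 4.1]{Bai-Ye-Zhang2019} to obtain non-directional metric subregularity in one step, so your intermediate decomposition into directional metric subregularity for each $d$ is unnecessary (indeed, since the case $d=0$ already yields non-directional metric subregularity by Definition \ref{definition of directional metric subregularity}, the other directions are superfluous).
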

\begin{proof}
If MPSC directional quasi-normality holds at $z^*$, then by \cite[Corollary 4.1]{Bai-Ye-Zhang2019}, the set-valued map $F(z):=P(z)-\Lambda$ is metrically subregular at $(z^*,0)$. By the definition of metric subregularity, there exist $\alpha\geq0$ and a neighborhood $N(z^*)$ of $z^*$ such that
\begin{eqnarray*}
{\rm dist}_{F^{-1}(0)}(z)\leq \alpha{\rm dist}_\Lambda(P(z))\quad \forall z\in N(z^*).
\end{eqnarray*}
Recall the distance functions in Lemma \ref{lema-distance function}, we complete the proof.
\end{proof}

By Clarke's exact penalty principle \cite[Proposition 2.4.3]{Clark1990}, we obtain the following exact penalty result immediately.

\begin{thm}
Let $z^*$ be a local optimal solution of MPSC (\ref{MPSC}). If either  MPSC directional quasi-normality or MPSC piecewise CRSC holds at $z^*$, then $z^*$ is a local optimal solution of the penalized problem:
\begin{eqnarray*}
\min f(z)+L_f \alpha\left[\sum_{i=1}^p\max\{0,g_i(z\})+\sum_{j=1}^q|h_j(z)|+\sum_{i=1}^m\min\{|G_i(z)|,|H_i(z)|\}\right],
\end{eqnarray*}
where $\alpha$ is the error bound constant and $L_f$ is the Lipschitz constant of $f$ around $z^*$.
\end{thm}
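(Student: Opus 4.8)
The plan is to invoke Clarke's exact penalty principle as stated, together with the local error bound results already established earlier in this section. First I would recall the precise statement of Clarke's exact penalty principle \cite[Proposition 2.4.3]{Clark1990}: if $\bar z$ is a local minimizer of $f$ over a set $S$, $f$ is Lipschitz with modulus $L_f$ near $\bar z$, and a function $\psi\ge 0$ with $\psi^{-1}(0)\cap N(\bar z)=S\cap N(\bar z)$ satisfies $\mathrm{dist}_S(z)\le \alpha\,\psi(z)$ for all $z$ in a neighborhood of $\bar z$, then $\bar z$ is a local minimizer of the penalized objective $f(z)+L_f\alpha\,\psi(z)$. Here $S=\F$, $\bar z=z^*$, and $\psi(z):=\sum_{i=1}^p\max\{0,g_i(z)\}+\sum_{j=1}^q|h_j(z)|+\sum_{i=1}^m\min\{|G_i(z)|,|H_i(z)|\}$.

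Next I would verify the two hypotheses of the penalty principle. The Lipschitz continuity of $f$ near $z^*$ holds since $f$ is continuously differentiable; call the modulus $L_f$. The MPSC local error bound $\mathrm{dist}_\F(z)\le\alpha\,\psi(z)$ on a neighborhood $V(z^*)$ is exactly the content of the two preceding theorems: it holds under MPSC directional quasi-normality by the theorem proved via \cite[Corollary 4.1]{Bai-Ye-Zhang2019}, and under MPSC piecewise CRSC by Theorem \ref{CRSCerrorb}. In either case the constant $\alpha$ is the error bound constant appearing in those statements. One also needs that $\psi$ vanishes precisely on $\F$ locally: indeed $\psi(z)=0$ iff $g_i(z)\le 0$ for all $i$, $h_j(z)=0$ for all $j$, and $\min\{|G_i(z)|,|H_i(z)|\}=0$, i.e. $G_i(z)H_i(z)=0$, for all $i$, which is exactly feasibility for MPSC (\ref{MPSC}).

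With both hypotheses in hand, Clarke's principle yields directly that $z^*$ is a local optimal solution of $\min f(z)+L_f\alpha\,\psi(z)$, which is the claimed penalized problem. I do not anticipate a genuine obstacle here: the theorem is essentially a corollary, and the only mild care needed is to match the norm used in the error bound (the $l_1$-norm, as in Lemma \ref{lema-distance function}) with the form of $\psi$, and to note that the distance formulas in Lemma \ref{lema-distance function} — in particular $\mathrm{dist}_{\Omega_{SC}}((a,b))=\min\{|a|,|b|\}$ — are what convert $\mathrm{dist}_\Lambda(P(z))$ into $\psi(z)$. If one wished to be fully self-contained one could absorb any equivalence-of-norms constant into $\alpha$, but since the preceding error bound theorems are already stated in terms of $\psi$, no further adjustment is required and the proof is a one-line appeal to \cite[Proposition 2.4.3]{Clark1990}.
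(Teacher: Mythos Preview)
Your proposal is correct and follows essentially the same approach as the paper: the paper itself does not give a proof but simply states that the result follows immediately from Clarke's exact penalty principle \cite[Proposition 2.4.3]{Clark1990} combined with the two preceding error bound theorems, which is exactly what you do (with more detail than the paper provides).
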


\section{Conclusions}
\begin{figure}%
		\scriptsize
		 \tikzstyle{format}=[rectangle,draw,thin,fill=white]
		 \tikzstyle{test}=[diamond,aspect=2,draw,thin]
		\tikzstyle{point}=[coordinate,on grid,]
		\begin{tikzpicture}
		\node[format] (MPSC-LICQ){MPSC-LICQ};
		\node[format,below of=MPSC-LICQ,node distance=7mm] (MPSC-MFCQ){MPSC-MFCQ};
		\node[format,below of=MPSC-MFCQ,node distance=7mm] (MPSC-CPLD){MPSC-CPLD};
		\node[format,right of=MPSC-LICQ,node distance=35mm] (LCQ){MPSC Linear CQ};
        \node[format,below of=LCQ,node distance=7mm](MPSC-CRCQ){MPSC-CRCQ};
		\node[format,below of=MPSC-CRCQ,node distance=7mm] (piecewise CRCQ){MPSC piecewise CRCQ};
\node[format,below of=piecewise CRCQ,node distance=7mm](piecewise RCRCQ){MPSC piecewise RCRCQ};
		\node[format,below of=piecewise RCRCQ,node distance=12mm] (piecewise RCPLD){MPSC piecewise RCPLD};
		\node[format,below of=piecewise RCPLD,node distance=13mm](MPSC-RCPLD){MPSC-RCPLD};
		\node[format,left of=MPSC-MFCQ,node distance=30mm](piecewise 
MFCQ){MPSC piecewise MFCQ};
		\node[format,below of=piecewise MFCQ,node distance=7mm](NNAMCQ){MPSC-NNAMCQ};
		\node[format,below of=NNAMCQ,node distance=12mm](pseudo){MPSC  pseudo-normality};
        \node[format,below of=pseudo,node distance=13mm](quasi){MPSC  quasi-normality};
        \node[format,left of=MPSC-LICQ,node distance=65mm](LICQ-d){MPSC-LICQ(d)};
        \node[format,below of=LICQ-d,node distance=13mm](FOSCMS){MPSC-FOSCMS in direction $d$};
        \node[format,below of=LICQ-d,node distance=20mm](SOSCMS){MPSC-SOSCMS in direction $d$};
        \node[format,below of=SOSCMS,node distance=14mm](pseudo directional){MPSC pseudo-normality in direction $d$};
        \node[format,below of=pseudo directional,node distance=12mm](quasi directional){MPSC quasi-normality in direction $d$};
        \node[format,below of=quasi directional,node distance=7mm](M in directional){M-stationarity in direction $d$};
        \node[format,below of=MPSC-CPLD,node distance=32mm](error bound){Metric subregularity/Error bound};
        \node[format,below of=error bound,node distance=7mm](M-stationary){M-stationarity};
        \node[format,below of=MPSC-CPLD,node distance=7mm](piecewise CPLD){MPSC piecewise CPLD};
        \node[format,below of=piecewise CPLD,node distance=12mm](MPSC-CRSC){MPSC piecewise CRSC};
		\node[format,below of=MPSC-RCPLD,node distance=7mm](AM-regular){AM-{regularity}};
		\draw[->] (MPSC-LICQ)--(MPSC-MFCQ);
		\draw[->] (MPSC-LICQ)--(MPSC-CRCQ);
		\draw[->](MPSC-CRCQ)--(piecewise CRCQ);
        \draw[->](piecewise CRCQ)--(piecewise RCRCQ);
		\draw[->](piecewise RCRCQ)--(piecewise RCPLD);
		\draw[->](MPSC-MFCQ)--(MPSC-CPLD);
		\draw[->](MPSC-CPLD)--(piecewise CPLD);
        \draw[->](piecewise CPLD)--(piecewise RCPLD);
		\draw[->](MPSC-CRCQ)--(MPSC-CPLD);
		\draw[->](piecewise RCPLD)--(MPSC-CRSC);
		\draw[->](NNAMCQ)--(piecewise MFCQ);
		\draw[->](piecewise MFCQ)--(NNAMCQ);
		\draw[->](NNAMCQ)--(pseudo);
		\draw[->](pseudo)--(quasi);
		\draw[->](MPSC-MFCQ)--(piecewise MFCQ);
        \draw[->](pseudo)--(pseudo directional);
        \draw[->](quasi)--(quasi directional);
        \draw[->](pseudo directional)--(quasi directional);
		\draw[->](quasi directional)--(error bound);
        \draw[->](quasi directional)--(M in directional);
        \draw[->](M in directional)--(M-stationary);
		\draw[->](MPSC-CRSC)--(error bound);
        \draw[->](error bound)--(M-stationary);
        \draw[->](MPSC-LICQ)--(LICQ-d);
        \draw[->](LICQ-d)--(FOSCMS);
        \draw[->](NNAMCQ)--(FOSCMS);
        \draw[->](FOSCMS)--(SOSCMS);
        \draw[->](SOSCMS)--(pseudo directional);
        \draw[->](piecewise CRCQ)--(piecewise CPLD);
        \draw[->](piecewise RCPLD)--(MPSC-RCPLD);
        \draw[->](MPSC-RCPLD)--(piecewise RCPLD);
        \draw[->](MPSC-RCPLD)--(M-stationary);
        \draw[->](LCQ)--(MPSC-CRCQ);
        \draw[->] (AM-regular)--(M-stationary);
        \draw[->] (MPSC-RCPLD)--(AM-regular);
		\end{tikzpicture}

\centering{Fig.3 Relation among CQs, stationary conditions and error bounds for MPSC}
	\end{figure}
In Figure 3, we give a diagram displaying the relations of various constraint qualifications, stationary conditions and error bounds. Note that in the diagram,  the arrows pointing to stationary points only  hold for  local optimal solutions. MPSC Linear CQ means all defining constraint functions $g,h, G,H$ are all affine. The relation between MPSC piecewise RCPLD and MPSC-RCPLD can be checked easily by using definitions. 
{The proof of relation between MPSC-RCPLD and AM-regularity is similar to \cite[Theorem 4.8]{Andreani etal2019}.}
To obtain all other  relationships, we use definitions and the results presented here together with the results from \cite{Mehlitz MP,Li-Guo,Bai-Ye-Zhang2019,Gfr132}.
From the diagram, we can see that directional conditions in a nonzero critical direction $d$ are weaker than the corresponding nondirectional ones.


%
%
%


\begin{thebibliography}{99}
\bibitem{Clason}{Clason C., Rund A. and Kunisch K.}: {Nonconvex penalization of switching control of partial differential equations}. Syst. Control Lett. 106, 1--8 (2017) 
\bibitem{kanzow-mehlitz-steck}{Kanzow C., Mehlitz P. and Steck D.}: {Relaxation schemes
for mathematical programmes with switching constraints}. Optim. Methods Softw. (2019) DOI:
10.1080/10556788.2019.1663425

\bibitem{Mehlitz MP} {Mehlitz P.}: {Stationarity conditions and constraint qualifications for mathematical programs with switching constraints with appications to either-or-constrainted programming}. Math. Program. 181, 149--186 (2020)

\bibitem{Gugat}{Gugat M.}: {Optimal switching boundary control of a string to rest in finite time}. ZAMM J. Appl. Math. Mech. 88, 283--305 (2008) 

\bibitem{Hante}{Hante F.M., Sager S.}: Relaxation methods for mixed-integer optimal control of partial differential equations. Comput.  Optim.  Appl. 55, 197--225 (2013) 
\bibitem{Seidman}{Seidman T.I.}: Optimal control of a diffusion/reaction/switching system. Evolut. Equ. Control Theory. 2, 723--731 (2013) 
\bibitem{LuoPangRalph}
Luo Z.-Q., Pang J.-S. and Ralph D.:
{Mathematical Programs with Equilibrium Constraints}. Cambridge University Press, Cambridge, UK (1996)
\bibitem{outrata} Outrata J.V., Kocvara M. and Zowe J.: {Nonsmooth Approach to Optimization problems with Equilibrium Constraints}. Kluwer Academic, Dordrecht (1998)

\bibitem{Achitziger} Achtziger W. and Kanzow C.: {Mathematical programs with vanishing constraints: optimality conditions and constraint qualifications}. Math. Program. 114, 69--99 (2008)

\bibitem{Hoheisel}{Hoheisel T. and  Kanzow C.}: {Stationary conditions for
 mathematical programs with vanishing constraints using weak constraint qualifications}. J. Math. Anal. Appl. 337, 292--310 (2008)

\bibitem{Li-Guo} {Li G. and Guo L.}: {Mordukhovich stationarity for mathematical programs with
switching constraints under weak constraint qualifications}. http://www.optimization-online.org/DB$\_$HTML/2019/07/7288.html

\bibitem{Flegel etal 2007}{Flegel M.L., Kanzow C.  and Outrata J.V.}: Optimality conditions for disjunctive programs with
application to mathematical programs with equilibrium constraints. Set-Valued Analysis, 15,
139--162 (2007)

\bibitem{Gfrerer SIAM Optimal 2014}{Gfrerer H.}: {Optimality conditions for disjunctive programs based on generalized differentiation
with application to mathematical programs with equilibrium constraints}. SIAM J. Optim. 24,  898--931 (2014)


\bibitem{Bai-Ye-Zhang2019}{ Bai K., Ye J.J. and Zhang J.}: {Directional quasi-/pseudo-normality as sufficient conditions for metric subregularity}. SIAM J. Optim. 29, 2625--2649 (2019)

\bibitem{Benko etal2019}{Benko M., $\rm\check{C}$ervinka M.  and Hoheisel T.}: {Suffcient conditions for metric subregularity of constraint
systems with applications to disjunctive and ortho-disjunctive programs}. {Set-Valued Var. Anal., (2021) DOI:
10.1007/s11228-020-00569-7}

\bibitem{Mehlitz2020optimization}{Mehlitz P.}: On the linear independence constraint qualification in disjunctive programming.
Optimization, 69, 2241--2277 (2020)
\bibitem{Benko-Gfrerer2017}{Benko M. and Gfrerer H.}: On estimating the regular normal cone to constraint systems and
stationarity conditions. Optimization, 66, 61--92 (2017)


\bibitem{Benko-Gfrerer2018}{Benko M. and Gfrerer H.}: New verifiable stationarity concepts for a class of mathematical
programs with disjunctive constraints. Optimization, 67, 1--23 (2018)

\bibitem{Gfrerer2019}{Gfrerer H.}: Linearized M-stationarity conditions for general optimization problems. Set-Valued Var. Anal. 27,  819--840 (2019)

\bibitem{Andreani etal2019}{Andreani R., Haeser G., Secchin L.D., and Silva P.J.S.}: New sequential optimality conditions for mathematical programs
with complementarity constraints and algorithmic consequences. SIAM J.  Optim., 29, 3201--3230 (2019)

\bibitem{Ramos2021} Ramos A.: Mathematical programs with equilibrium constraints: a sequential optimality condition, new constraint qualifications and algorithmic consequences, Optim. Methods Softw., 36, 1--37 (2021)

\bibitem{Mehlitz2020} Mehlitz P.: Asymptotic stationarity and regularity for nonsmooth optimization
problems. J. Nonsmooth Anal. Optim., 1, 6575 (2020)

\bibitem{MFCQ1967} {Mangasarian O.L. and Fromovitz S.}: {The Fritz John optimality conditions in the presence of equality and
inequality constraints}. J. Math. Anal. Appl. 17, 37--47 (1967)

\bibitem{Janin1984} {Janin R.}: {Direction derivative of the marginal function in nonlinear programming}. Math.
Program. Stud. 21, 110--126 (1984)

\bibitem{minchenko-stakhovski2011} {Minchenko L. and Stakhovski S.}: {On relaxed constant rank regularity condition in mathematical programming}. Optimization 60, 429--440 (2011)




\bibitem{Qi-Wei2000}{ Qi L. and Wei Z.}: {On the constant positive linear dependence condition and its application to SQP
methods}. SIAM J. Optim. 10, 963--981 (2000)

\bibitem{Andreani etal 2012MP}{Anderani R., Haeser G., Schuverdt M.L. and Silva P.J.S.}:
{A relaxed constant positive linear dependence constraint qualification and applications}. Math. Program. 135, 255--273 (2012)


\bibitem{Andreani etal 2012} {Anderani R., Haeser G., Schuverdt M.L. and Silva P.J.S.}: {Two new weak constraint qualifications and applications}. SIAM J. Optim. 22, 1109--1135 (2012)

\bibitem{Kruger etal2014}{Kruger A.Y., Minchenko L. and Outrata J.V.}: {On relaxing the Mangasarian-Fromovitz constraint qualication}.
Positivity 18, 171--189 (2014)

\bibitem{Solodov2011}{Solodov M.V.}: {Constraint qualifications}. Wiley Encyclopedia of Operations Research and Management Science, John Wiley, Hoboken, NJ (2011)

\bibitem{Guo-Zhang-Lin2014}{Guo L., Zhang J. and Lin G.-H.}: {New results on constraint qualifications for nonlinear
extremum problems and extensions}. J. Optim. Theory Appl. 163, 737--754 (2014)

\bibitem{Rockafellar}{ Rockafellar R.T. and Wets R.J.-B.}:  {Variational Analysis}.
Springer-Verlag,  Berlin (1998)

\bibitem{GfrererYeZhou}{Gfrerer H., Ye J.J. and Zhou J.C.}: {Second-order optimality conditions for non-convex set-constrained optimization problems}. preprint,	 arXiv:1911.04076

\bibitem{Henrion}{Henrion R. and   Outrata J.V.}: On calculating the normal cone to a finite union of convex polyhedra,  Optimization 57, 57--78 (2008)



\bibitem{Gfrerer SVAA2013} {Gfrerer H.}: {On directional metric regularity, subregularity and optimality conditions
for nonsmooth mathematical programs}. Set-Valued Var. Anal. 21,  151--176 (2013)



\bibitem{Ginchev and Mordukhovich2011}{Ginchev I. and Mordukhovich B.S.}: {On directionally dependent subdifferentials}.
C.R. Bulg. Acad. Sci. 64, 497--508 (2011)

\bibitem{Ye-Zhou2018MP}{Ye J.J. and Zhou J.}: {Verifiable sufficient conditions for the error bound property of second-order
cone complementarity problems}. Math. Program. 171,  361--395 (2018)



\bibitem{Gfr132}{Gfrerer H.}: {On directional metric subregularity and second-order optimality conditions for a class of nonsmooth mathematical programs}. SIAM J. Optim. 23, 632--665 (2013)








\bibitem{MengweiYe}{Xu M. and Ye J.J.}: Relaxed constant positive linear dependence constraint qualification for disjunctive programs, preprint, {arXiv:1912.12372v1}

\bibitem{Clark1990}{Clarke F.H.}: {Optimization and Nonsmooth Analysis}. Wiley Interscience, New York, 1983; reprinted
as vol. 5 of Classics Appl. Math. 5, SIAM, Philadelphia, PA (1990)





%
%
%
%
%
%
%
%
%
%
%
%
%
%
%
%
%
%
%
%
%
%
%
%
%
%
%
%
%
%
%
%
%
%
%
%
%
%
%
%
%
%
%
%
%
%
%
%
%
%
%
%


\end{thebibliography}
\end{document}